\declaretheorem[numberwithin=section,name=Theorem]{theorem}
\declaretheorem[name=Lemma,sibling=theorem]{lemma}
\declaretheorem[name=Corollary,sibling=theorem]{corollary}
\declaretheorem[name=Definition,sibling=theorem,style=definition]{definition}
\declaretheorem[name=Proposition,sibling=theorem]{proposition}
\declaretheorem[name=Open Problem]{openprob}
\declaretheorem[name=Example,sibling=theorem,style=definition]{example}
\declaretheorem[name=Note,style=remark]{note}
\declaretheorem[name=Remark,sibling=note,style=remark]{remark}
\newenvironment{notation}[1][Notation]{\begin{trivlist}
\item[\hskip \labelsep {\bfseries #1}.]}
{\end{trivlist}}
\setlist[enumerate]{label=(\arabic*)}
\newlist{exercises}{enumerate}{1}
\setlist[exercises]{label=Ex. \arabic*}
\def \C {\mathbb{C}}
\newcommand{\Z}{\mathbb{Z}}
\newcommand{\vl}[1]{\overline{#1}}
\newcommand{\dl}{\displaystyle}
\newcommand{\thus}{{.\raise 4pt\hbox{.}.\;}}
\newcommand{\Anlg}[1]{\operatorname{\mathscr{A}}(#1)}
\newcommand{\tAnlg}[1]{\widetilde{\operatorname{\mathscr{A}}}(#1)}
\newcommand{\Hom}[2]{\operatorname{Hom}_{#1}(#2)}
\newcommand{\Res}[2]{\operatorname{Res}_{#1}^{#2}}
\DeclareMathOperator{\supp}{supp}
\DeclareMathOperator{\gl}{\mathfrak{gl}}
\DeclareMathOperator{\Id}{Id}
\DeclareMathOperator{\Aut}{Aut}
\DeclareMathOperator{\Frac}{Frac}
\DeclareMathOperator{\fdMod}{-\underline{Mod}^{f.d.}}
\DeclareMathOperator{\Span}{Span}
\DeclareMathOperator{\Gal}{Gal}
\begin{document}

\title{An Extension of $U(\gl_n)$ Related to the Alternating Group and Galois Orders}
\author{Erich C. Jauch}
\date{}% Delete this line to display the current date

\address{Department of Mathematics, Iowa State University, Ames, IA-50011, USA}
\email{ecjauch@iastate.edu}
\urladdr{ecjauch.com}

\maketitle

%%% Main Matter

\begin{abstract}
In 2010, V. Futorny and S. Ovsienko gave a realization of $U(\gl_n)$ as a subalgebra
of the ring of invariants of a certain noncommutative ring with respect to the action
of $S_1\times S_2\times\cdots\times S_n$, where $S_j$ is the symmetric group on $j$
variables. An interesting question is what a similar algebra would be in the invariant
ring with respect to a product of alternating groups. In this paper we define such an
algebra, denoted $\Anlg{\gl_n}$, and show that it is a Galois ring. For $n=2$, we show that it is a generalized Weyl algebra, and for $n=3$ provide generators and a list of verified relations. We also discuss some
techniques to construct Galois orders from Galois rings. Additionally, we study categories of
finite-dimensional modules and generic Gelfand-Tsetlin modules over $\Anlg{\gl_n}$.
Finally, we discuss connections between the Gelfand-Kirillov Conjecture, $\Anlg{\gl_n}$, and the positive solution to Noether's problem for the alternating group.
\end{abstract}

{\bf Keywords:} alternating group, enveloping algebra, Gelfand-Kirillov Conjecture, Gelfand-Tsetlin modules, weight modules.

{\bf 2010 Mathematics Subject Classification:} 16G99, 17B10

\section{Introduction}\label{sec: introduction}

The study of algebra-subalgebra pairs is an important technique used in the representation theory
of Lie algebras \cite{LM73},\cite{DFO94}. In 2010, Futorny and Ovsienko focused on so called semicommutative
pairs $\Gamma\subset\mathscr{U}$, where $\mathscr{U}$ is an associative (noncommutative)
$\C$-algebra and $\Gamma$ is an integral domain \cite{FO10}.
This situation generalizes the pair $(\Gamma,U(\gl_n))$ where $\Gamma$
is the \emph{Gelfand-Tsetlin} subalgebra $\Gamma=\C\langle\cup_{k=1}^nZ(U(\gl_k))\rangle$
\cite{Gelfand:1950ihs}, \cite{DFO94}. \emph{Galois rings} and \emph{Galois orders} were originally
defined and studied by Futorny and Ovsienko in \cite{FO10} and \cite{FO14}. They form a
collection of algebras that contains many important examples including:
\emph{generalized Weyl algebras} defined by independently by Bavula \cite{BavulaGWA}
and Rosenberg \cite{rosenberg_1995} in the early nineties, the universal enveloping algebra
of $\gl_n$, shifted Yangians and finite $W$-algebras \cite{FMO10}, Coulomb branches
\cite{Webster19}, and $U_q(\gl_n)$ \cite{FH14}. Their structures and representations
have been studied in \cite{EFG18}, \cite{QuanutmLinearGaloisAlgebras},
\cite{Hartwig20}, and \cite{MV18}.

In \cite{FO10}, Futorny and Ovsienko described $U(\gl_n)$ as the subalgebra of
the ring of invariants of a certain noncommutative ring with respect to the action
of $S_1\times S_2\times\cdots\times S_n$, where $S_j$ is the symmetric group on $j$
variables such that $U(\gl_n)$ was a Galois order with respect to its
Gelfand-Tsetlin subalgebra $\Gamma$.

We recall in Galois theory, given a Galois extension $L/K$ with $\Gal(L/K)=G$ the
subgroups $\widetilde{G}$ of $G$ correspond to intermediate fields $\widetilde{K}$
with $\Gal(L/\widetilde{K})=\widetilde{G}$ with normal subgroups of particular
interest. Since $S_n$ has only one normal subgroup for $n\geq5$, one might wonder
what the object similar to $U(\gl_n)$ would be if we considered the invariants with
respect to the normal subgroup $A_1\times A_2\times\cdots\times A_n$, where $A_j$
is the alternating group on $j$ variables. This paper describes such an algebra,
denoted by $\Anlg{\gl_n}$ (see Definition \ref{def: A(gln)}). This provides the
first natural example of a Galois ring whose ring $\Gamma$ is not a semi-Laurent
polynomial ring, that is, a tensor product of polynomial rings and Laurent
polynomial rings. Additionally, our symmetry group $A_1\times A_2\times\cdots\times A_n$ is not
a complex reflection group. Our algebra $\Anlg{\gl_n}$ is an extension of $U(\gl_n)$
by $n-1$ elements $\mathcal{V}_2,\ldots,\mathcal{V}_n$.
In Proposition \ref{prop: basic properties of A(gln)}, we prove some
properties of $\Anlg{\gl_n}$ that are quite similar to $U(\gl_n)$.
For example, it is shown that the ``Weyl Group'' of $\Anlg{\gl_n}$ is the
alternating group $A_n$, in the sense that there is a natural extension
$\widetilde{\varphi}_{\rm HC}$ of the Harish-Chandra homomorphism 
$\varphi_{\rm HC}\colon Z(U(\gl_n))\rightarrow S(\mathfrak{h})\cong\C[x_1,\ldots,x_n]$,
such that
\[
\widetilde{\varphi}_{\rm HC}\colon Z(\Anlg{\gl_n})\xrightarrow{\cong}\C[x_1,\ldots,x_n]^{A_n}.
\]
Moreover, there is a chain of subalgebras
$\Anlg{\gl_1}\subset\Anlg{\gl_2}\subset\cdots\subset\Anlg{\gl_n}$.
In Section \ref{sec: n=2}, we give multiple descriptions of $\Anlg{\gl_2}$ and
prove it is realizable as a Galois order. Example \ref{ex: Gammatilde is not maximal comm}
shows that $\Anlg{\gl_n}$ is not a Galois order for $n\geq3$. The rest
of Section \ref{sec: n=3} lists a set of generators and some verified relations for
$\Anlg{\gl_3}$, but this list may be incomplete. In Section \ref{sec: fd modules over A(gln)}, 
we show that the category of finite-dimensional modules in not semi-simple and
classify simple finite-dimensional weight modules. In Section 
\ref{sec: locolization theorem}, we provide a technique to turn a general Galois ring
into a Galois order that is related to localization (see Theorem
\ref{thm: Order if centralizer is a nice localization for arbitrary G}).
We use this to prove that a family of simple examples are Galois orders (see Corollary
\ref{cor: toy example is an order}) and that a localization of $\Anlg{\gl_n}$ is a
(co-)principal Galois order over the localized $\widetilde{\Gamma}$ (see Definition
\ref{def: principal and co-principal Galois orders} and Corollary
\ref{cor: Localized A(gln) is an order}). We use this localization
to construct canonical Gelfand-Tsetlin modules over $\Anlg{\gl_n}$ in
Section \ref{sec: GT Modules over A(gln)}. Finally, in Section
\ref{sec: GK conj for A(gln)}, we compute the division ring of fractions and
prove, that for $n\leq5$, $\Anlg{\gl_n}$ satisfies the Gelfand-Kirillov conjecture
(see \cite{GK66}). For the latter, we use Maeda's positive
solution to Noether's problem for the alternating group $A_5$ \cite{MAEDA1989418},
and Futorny-Schwarz's Theorem 1.1 in \cite{FSNoncommutativeNoetherVSClassicalNoether}.

\subsection{Galois orders}\label{sec: Galois Orders}
Galois orders were introduced in \cite{FO10}. We will be following
the set up from \cite{Hartwig20}. Let $\Lambda$ be an integrally closed
domain, $G$ a finite subgroup of $\Aut(\Lambda)$, and $\mathscr{M}$ a
submonoid of $\Aut(\Lambda)$. We will adhere to the following
assumptions for the entire paper:
\begin{align*}
{\rm (A1)}\quad & (\mathscr{M}\mathscr{M}^{-1})\cap G=1_{\Aut{}{\Lambda}} & \text{(\emph{separation})}\\
{\rm (A2)}\quad & \forall g\in G, \forall\mu\in\mathscr{M}\colon {}^g\mu=g\circ\mu\circ g^{-1}\in\mathscr{M} &
\text{(\emph{invariance})}\\
{\rm (A3)}\quad & \Lambda \text{ is Noetherian as a module over } \Lambda^G & \text{(\emph{finiteness})}
\end{align*}

Let $L=\Frac(\Lambda)$ and $\mathscr{L}=L\#\mathscr{M}$, the skew monoid ring, which is defined as the
free left $L$-module on $\mathscr{M}$ with multiplication given by
$a_1\mu_1\cdot a_2\mu_2=(a_1\mu_1(a_2))(\mu_1\mu_2)$ for $a_i\in L$ and $\mu_i\in\mathscr{M}$.
As $G$ acts on $\Lambda$ by automorphisms, we can easily extend this action to $L$, and by {\rm (A2)},
$G$ acts on $\mathscr{L}$. So we consider the following $G$-invariant subrings
$\Gamma=\Lambda^G$, $K=L^G$, and $\mathscr{K}=\mathscr{L}^G$.

A benefit of these assumptions is the following lemma.

\begin{lemma}[\cite{Hartwig20}, Lemma 2.1 (ii), (iv) \& (v)]\label{lem: Hartwig big lemma}
\item
\begin{enumerate}[\rm (i)]
\item $K=\Frac(\Gamma)$.
\item $\Lambda$ is the integral closure of $\Gamma$ in $L$.
\item $\Lambda$ is a finitely generated $\Gamma$-module and a Noetherian ring.
\end{enumerate}
\end{lemma}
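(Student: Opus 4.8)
The plan is to prove the three claims essentially as a package, exploiting the finiteness hypothesis (A3) and standard facts about rings of invariants under a finite group. I would not prove this from scratch, since it is quoted from \cite{Hartwig20}, but here is the argument I would reconstruct. Throughout, write $\Gamma=\Lambda^G$, $L=\Frac(\Lambda)$, $K=L^G$.

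First I would establish (iii), since (i) and (ii) lean on it. Because $\Lambda$ is a domain and $G\leq\Aut(\Lambda)$ is finite, every element $\lambda\in\Lambda$ is a root of the monic polynomial $\prod_{g\in G}(t-g(\lambda))\in\Gamma[t]$, so $\Lambda$ is integral over $\Gamma$. Assumption (A3) says $\Lambda$ is a Noetherian $\Gamma^{}$-module, which in particular means it is a \emph{finitely generated} $\Gamma$-module; this is exactly the last clause of (iii). For the ``Noetherian ring'' clause: $\Gamma$ itself is Noetherian because any ascending chain of ideals of $\Gamma$ extends to an ascending chain of $\Gamma$-submodules of the Noetherian module $\Lambda$ (via $I\mapsto I\Lambda$, or more carefully via the Artin–Tate / Eakin–Nagata lemma, which says a ring that is module-finite over a subring and Noetherian as a module forces the subring to be Noetherian); then $\Lambda$, being a finitely generated module over the Noetherian ring $\Gamma$, is a Noetherian $\Gamma$-module, hence a fortiori a Noetherian ring.

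Next, for (i): the inclusion $\Frac(\Gamma)\subseteq K$ is immediate since $\Gamma\subseteq\Lambda^G$ and inverting $G$-invariant elements stays inside $L^G=K$. For the reverse inclusion, take $x\in K=L^G$ and write $x=a/b$ with $a,b\in\Lambda$, $b\neq0$. Multiply numerator and denominator by $\prod_{g\neq 1}g(b)$ to arrange that the denominator is $N(b)=\prod_{g\in G}g(b)\in\Gamma$; thus $x=c/N(b)$ with $c\in\Lambda$ and $N(b)\in\Gamma$. Since $x$ is $G$-invariant and $N(b)$ is $G$-invariant, $c=x\,N(b)$ is $G$-invariant, i.e.\ $c\in\Lambda^G=\Gamma$. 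Hence $x\in\Frac(\Gamma)$.

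Finally, for (ii): I showed above that $\Lambda$ is integral over $\Gamma$, so it remains to see $\Lambda$ is integrally closed in $L$ — but this is exactly the hypothesis that $\Lambda$ is an integrally closed domain with fraction field $L$. And $\Lambda\subseteq L$ with $\Frac(\Lambda)=L$, so $\Lambda$ is precisely the integral closure of $\Gamma$ inside $L$. The main (only real) obstacle is the Noetherian-ring assertion in (iii): the passage from ``Noetherian as a $\Gamma$-module'' to ``$\Gamma$ is a Noetherian ring'' needs the Eakin–Nagata lemma (equivalently Artin–Tate), which one should cite rather than reprove; everything else is the standard norm-trick bookkeeping for invariants of a finite group acting on a domain.
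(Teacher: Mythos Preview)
Your argument is correct. The paper does not supply its own proof of this lemma; it is quoted verbatim from \cite{Hartwig20}, so there is nothing in the present paper to compare your reconstruction against. One small simplification: for the Noetherianity of $\Gamma$ you do not actually need Eakin--Nagata or the map $I\mapsto I\Lambda$; since $\Gamma\subseteq\Lambda$ is a $\Gamma$-submodule of the Noetherian $\Gamma$-module $\Lambda$, it is itself a Noetherian $\Gamma$-module, i.e.\ a Noetherian ring, directly.
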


What follows are some definitions and propositions from \cite{FO10}.

\begin{definition}[\cite{FO10}]
A finitely generated $\Gamma$-subring $\mathscr{U}\subseteq\mathscr{K}$ is called a \emph{Galois $\Gamma$-ring}
(or \emph{Galois ring with respect to $\Gamma$}) if $K\mathscr{U}=\mathscr{U}K=\mathscr{K}$.
\end{definition}

\begin{definition}
Let $u\in\mathscr{L}$ such that $u=\sum_{\mu\in\mathscr{M}}a_\mu \mu$. The \emph{support of $u$ over $\mathscr{M}$} is the following:
\[
\supp u=\bigg\{\mu\in\mathscr{M}~\Big\vert~a_\mu\neq0\text{ for }u=\sum_{\mu\in\mathscr{M}}a_\mu \mu\bigg\}
\]
\end{definition}

\begin{proposition}[\cite{FO10}, Proposition 4.1]\label{prop: Gamma Ring Alt Conditions}
Assume a $\Gamma$-ring $\mathscr{U}\subseteq\mathscr{K}$ is generated by $u_1,\ldots,u_k\in \mathscr{U}$.
\begin{enumerate}[\rm (1)]
\item If $\bigcup_{i=1}^k\supp u_i$ generate $\mathscr{M}$ as a monoid, then $\mathscr{U}$ is a Galois ring.
\item If $L\mathscr{U}=L\#\mathscr{M}$, then $\mathscr{U}$ is a Galois ring.
\end{enumerate}
\end{proposition}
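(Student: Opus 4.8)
The plan is to follow the strategy of Futorny–Ovsienko and reduce both parts to the basic structure of the skew monoid ring $\mathscr{L}=L\#\mathscr{M}$ together with Lemma~\ref{lem: Hartwig big lemma}. First I would record the key algebraic identity governing multiplication in $\mathscr{L}$: if $u=\sum_{\mu}a_\mu\mu$ and $v=\sum_{\nu}b_\nu\nu$, then $uv=\sum_{\mu,\nu}a_\mu\mu(b_\nu)\,\mu\nu$, so that $\supp(uv)\subseteq(\supp u)(\supp v)$; and for $c\in L$, $\supp(cu)=\supp u$ and $\supp(uc)=\supp u$. Consequently, for a product $u_{i_1}\cdots u_{i_m}$ of generators the support lies in the set of products $\mu_1\cdots\mu_m$ with $\mu_j\in\supp u_{i_j}$, i.e. inside the submonoid generated by $\bigcup_i\supp u_i$. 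This shows $\mathscr{U}$, and even $L\mathscr{U}$, is supported inside $L\#\mathscr{M}'$ where $\mathscr{M}'$ is the submonoid generated by the supports of the generators.

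For part (1), the hypothesis is exactly that $\mathscr{M}'=\mathscr{M}$. To prove $\mathscr{U}$ is a Galois ring I must show $K\mathscr{U}=\mathscr{U}K=\mathscr{K}$. One inclusion is immediate: $\mathscr{U}\subseteq\mathscr{K}$ and $K$ is the center-ish coefficient field of $\mathscr{K}$ (more precisely $K=L^G$ acts on $\mathscr{K}$ from both sides), so $K\mathscr{U}\subseteq\mathscr{K}$ and $\mathscr{U}K\subseteq\mathscr{K}$. For the reverse inclusion, the point is that an arbitrary element of $\mathscr{K}=\mathscr{L}^G$ is a $G$-invariant finite sum $\sum_{\mu\in\mathscr{M}}a_\mu\mu$; since $\bigcup_i\supp u_i$ generates $\mathscr{M}$, each $\mu$ occurring is a product of support elements, hence (up to a coefficient in $L$) lies in the support of a product of generators. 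I would then argue, as in \cite{FO10}, that one can adjust coefficients by elements of $K$ — using that $L$ is a free $K$-module of rank $|G|$ via Lemma~\ref{lem: Hartwig big lemma}(i) and Galois descent — to express any $G$-invariant element of $\mathscr{L}$ as a $K$-linear combination (on either side) of products of the $u_i$. This is the step I expect to be the main obstacle: producing the right $G$-invariant element of $\mathscr{U}$ with prescribed leading support term and then inducting downward on the support (which is finite and partially ordered by the submonoid structure) to clear lower-order terms. The separation assumption (A1) is what guarantees that distinct cosets $\mu G$ give ``independent'' directions so the induction is well-founded.

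For part (2), the hypothesis $L\mathscr{U}=L\#\mathscr{M}=\mathscr{L}$ is stronger and the argument is cleaner. Applying $G$-invariants (i.e. the averaging/Reynolds operator or simply intersecting with $\mathscr{L}^G$) to $L\mathscr{U}=\mathscr{L}$ and using $(L\mathscr{U})^G\supseteq (L^G)\mathscr{U}^{?}$ is slightly delicate, so instead I would note directly: since $L\mathscr{U}=\mathscr{L}$, for any $w\in\mathscr{K}\subseteq\mathscr{L}$ we can write $w=\sum_j c_j u_j'$ with $c_j\in L$ and $u_j'\in\mathscr{U}$; then apply the trace map $\operatorname{tr}=\sum_{g\in G}g$ (which is $K$-linear, maps $\mathscr{L}$ to $\mathscr{K}$, and fixes $w$ up to the factor $|G|$, invertible over $\C$), obtaining $|G|w=\sum_j\operatorname{tr}(c_j u_j')$. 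Because $G$ acts on $\mathscr{L}$ compatibly and $\mathscr{U}$ is $G$-stable only after we also move the $u_j'$ — here one uses that $g(c_ju_j')=g(c_j)\,{}^g(u_j')$ and that $\mathscr{U}$ being a $\Gamma$-subring is not automatically $G$-stable, so the cleaner route is to first establish $\bigcup\supp u_i$ generates $\mathscr{M}$ as a consequence of $L\mathscr{U}=\mathscr{L}$ (every $\mu\in\mathscr{M}$ appears in some product of generators once we have all of $\mathscr{L}$) and then invoke part (1). So (2) follows from (1) once we observe that $L\mathscr{U}=\mathscr{L}$ forces the supports of the $u_i$ to generate $\mathscr{M}$: indeed $L\mathscr{U}$ is spanned over $L$ by products of the $u_i$, whose supports lie in the submonoid $\mathscr{M}'$ generated by $\bigcup_i\supp u_i$, so $L\mathscr{U}\subseteq L\#\mathscr{M}'$; equality with $L\#\mathscr{M}$ forces $\mathscr{M}'=\mathscr{M}$, and part (1) applies. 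I would present (2) in exactly this way — as a short corollary of (1) plus the support-submonoid containment — which sidesteps any subtlety about $G$-stability of $\mathscr{U}$.
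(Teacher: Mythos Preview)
The paper does not give its own proof of this proposition: it is stated with the attribution ``\cite{FO10}, Proposition 4.1'' and no proof environment follows. So there is nothing in the present paper to compare your attempt against; the result is simply imported from Futorny--Ovsienko.

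That said, a brief remark on your sketch. Your reduction of (2) to (1) via the support-submonoid containment $L\mathscr{U}\subseteq L\#\mathscr{M}'$ is correct and is essentially how Futorny--Ovsienko handle it. For (1), your outline is on the right track but remains a plan rather than a proof: the crux is the ``main obstacle'' you yourself flag, namely producing for each $\mu\in\mathscr{M}$ an element of $\mathscr{U}$ whose support contains $\mu$ and then inducting to clear remaining terms using $K$-coefficients. You have not actually carried this step out; in particular, one must be careful that a product $u_{i_1}\cdots u_{i_m}$ with $\mu\in(\supp u_{i_1})\cdots(\supp u_{i_m})$ really has $\mu$ in its support (cancellation could in principle kill that term), and one must organize the induction on a well-founded ordering of the finite support. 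Futorny--Ovsienko address exactly these points in their Proposition 4.1, so if you want a complete argument you should consult \cite{FO10} directly rather than the present paper.
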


\begin{theorem}[\cite{FO10},  Theorem 4.1 (4)]\label{thm: Center of a Galois Ring}
Let $\mathscr{U}$ be a Galois $\Gamma$-ring. Then the center $Z(\mathscr{U})$ of the algebra
$\mathscr{U}$ equals $\mathscr{U}\cap K^{\mathscr{M}}$,
where $K^{\mathscr{M}}=\{k\in K\mid \mu(k)=k~\forall\mu\in\mathscr{M}\}$
\end{theorem}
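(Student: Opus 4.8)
The plan is to prove the two inclusions of the claimed equality separately; the content lies in $Z(\mathscr{U})\subseteq\mathscr{U}\cap K^{\mathscr{M}}$. For the easy inclusion $\mathscr{U}\cap K^{\mathscr{M}}\subseteq Z(\mathscr{U})$ I would argue that every $a\in K^{\mathscr{M}}$, viewed in $\mathscr{L}$ via the embedding $L\hookrightarrow\mathscr{L}$, $a\mapsto a\cdot 1_{\mathscr{M}}$, is central in all of $\mathscr{L}$: for $c\in L$ and $\mu\in\mathscr{M}$ one has $a\cdot(c\mu)=(ac)\mu$ while $(c\mu)\cdot a=c\,\mu(a)\,\mu=(ca)\mu$, and these coincide since $\mu(a)=a$ and $L$ is commutative; hence any element of $\mathscr{U}\cap K^{\mathscr{M}}$ commutes with every element of $\mathscr{U}\subseteq\mathscr{L}$ and so lies in $Z(\mathscr{U})$.

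For the reverse inclusion, take $z\in Z(\mathscr{U})$ and write $z=\sum_{\mu\in\mathscr{M}}a_\mu\,\mu$ with $a_\mu\in L$ of finite support. The first step is to pin down $\supp z$ using that $z$ commutes with $\Gamma\subseteq\mathscr{U}$: comparing coefficients in $z\gamma=\gamma z$ for $\gamma\in\Gamma$ (viewed in $\mathscr{L}$) forces $a_\mu\bigl(\mu(\gamma)-\gamma\bigr)=0$ in $L$, so since $L$ is a field every $\mu\in\supp z$ fixes $\Gamma$ pointwise. Extending such a $\mu$ to an automorphism of $L=\Frac(\Lambda)$, it then fixes $\Frac(\Gamma)=K$ pointwise by Lemma~\ref{lem: Hartwig big lemma}(i); as $G$ is a finite group acting faithfully on $L$ with fixed field $K$, Artin's lemma gives $\Gal(L/K)=G$, whence $\mu|_L\in G$, and since $\mu$ and this element of $G$ are automorphisms of $\Lambda$ agreeing on $L$ they coincide. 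Thus $\mu\in\mathscr{M}\cap G\subseteq(\mathscr{M}\mathscr{M}^{-1})\cap G=\{1\}$ by the separation assumption (A1), so $z=a_1\cdot 1_{\mathscr{M}}$ with $a_1\in L$; applying each $g\in G$ to $z\in\mathscr{K}=\mathscr{L}^G$ gives $g(a_1)=a_1$, i.e.\ $a_1\in L^G=K$.

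It remains to show $a_1\in K^{\mathscr{M}}$, and here the Galois-ring hypothesis enters. For fixed $\mu\in\mathscr{M}$, the element $\sum_{g\in G}{}^{g}\mu$ of $\mathscr{L}$ lies in $\mathscr{K}=\mathscr{L}^G$ (its terms belong to $\mathscr{M}$ by the invariance assumption (A2)) and has $\mu$ in its support, since the coefficient of $\mu$ in it equals $|\{g\in G:{}^{g}\mu=\mu\}|\neq 0$ in characteristic zero (in general one replaces $\mu$ by $b\mu$ for a suitable $b\in L$ and invokes Dedekind's lemma on the independence of distinct field automorphisms). Writing this element as a finite sum $\sum_i c_i u_i$ with $c_i\in K$ and $u_i\in\mathscr{U}$---possible because $K\mathscr{U}=\mathscr{K}$---and noting $\supp(c_i u_i)\subseteq\supp(u_i)$, we conclude $\mu\in\supp u_i$ for some $i$. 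Comparing the coefficient of $\mu$ on the two sides of $z u_i=u_i z$, with $z=a_1\cdot 1_{\mathscr{M}}$, and using that $L$ is a domain then gives $a_1=\mu(a_1)$. Since $\mu$ was arbitrary, $a_1\in K^{\mathscr{M}}$, and therefore $z\in\mathscr{U}\cap K^{\mathscr{M}}$.

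The step I expect to be the main obstacle is the determination of $\supp z$: one has to pass cleanly through the Galois correspondence to see that an element of $\mathscr{M}$ fixing $\Gamma$ pointwise must already be an element of $G$, and then use (A1) to eliminate it. The other indispensable input---that $K\mathscr{U}=\mathscr{K}$ forces every $\mu\in\mathscr{M}$ to occur in the support of some element of $\mathscr{U}$---is what makes the full Galois-ring hypothesis (rather than merely $\Gamma\subseteq\mathscr{U}$) necessary; apart from these, the argument is routine bookkeeping with the skew-monoid-ring multiplication $a_1\mu_1\cdot a_2\mu_2=\bigl(a_1\mu_1(a_2)\bigr)(\mu_1\mu_2)$.
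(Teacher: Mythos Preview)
The paper does not supply its own proof of this statement: Theorem~\ref{thm: Center of a Galois Ring} is recorded solely as a citation of \cite[Theorem~4.1(4)]{FO10}, with no accompanying argument. There is therefore nothing in the present paper to compare your proposal against.

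That said, your argument is essentially the standard one and is sound. The two inclusions are handled correctly; the key step---showing that any $\mu\in\supp z$ must fix $\Gamma$, hence $K$, hence lie in $\Gal(L/K)=G$ by Artin, hence be trivial by (A1)---is exactly how the result is proved in \cite{FO10}. Your use of the Galois-ring hypothesis $K\mathscr{U}=\mathscr{K}$ to guarantee that every $\mu\in\mathscr{M}$ appears in the support of some $u\in\mathscr{U}$ is also the right mechanism; the symmetrized element $\sum_{g\in G}{}^{g}\mu$ is a clean way to exhibit something in $\mathscr{K}$ supported at $\mu$. One small remark: your parenthetical about replacing $\mu$ by $b\mu$ and invoking Dedekind in positive characteristic is unnecessary here, since the paper works over $\mathbb{C}$ and \cite{FO10} imposes a similar characteristic-zero setup, but it does no harm.
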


\begin{definition}[\cite{FO10}]\label{def: Galois Order defintion}
A Galois $\Gamma$-ring $\mathscr{U}$ in $\mathscr{K}$ is a \emph{left} (respectively \emph{right})
\emph{Galois $\Gamma$-order in $\mathscr{K}$} if for any finite-dimensional left (respectively right)
$K$-subspace $W\subseteq\mathscr{K}$, $W\cap\mathscr{U}$ is a finitely generated left (respectively
right) $\Gamma$-module. A Galois $\Gamma$-ring $\mathscr{U}$ in $\mathscr{K}$ is a \emph{Galois
$\Gamma$-order in $\mathscr{K}$} if $\mathscr{U}$ is a left and right Galois $\Gamma$-order in
$\mathscr{K}$.
\end{definition}

\begin{definition}[\cite{DFO94}]\label{def: Harish-Chandra subalg}
Let $\Gamma\subset\mathscr{U}$ be a commutative subalgebra. $\Gamma$ is called a
\emph{Harish-Chandra subalgebra} in $\mathscr{U}$ if for any $u\in\mathscr{U}$, $\Gamma u\Gamma$
is finitely generated as both a left and as a right $\Gamma$-module.
\end{definition}

Let $\mathscr{U}$ be a Galois ring and $e\in\mathscr{M}$ the unit element. We denote
$\mathscr{U}_e=\mathscr{U}\cap Le$.

\begin{theorem}[\cite{FO10}, Theorem 5.2]\label{thm: Galois Order condition}
Assume that $\mathscr{U}$ is a Galois ring, $\Gamma$ is finitely generated and $\mathscr{M}$
is a group.
\begin{enumerate}[\rm (1)]
\item Let $m\in\mathscr{M}$. Assume $m^{-1}(\Gamma)\subseteq\Lambda$ (respectively $m(\Gamma)\subseteq\Lambda$).
Then $\mathscr{U}$ is right (respectively left) Galois order if and only if $\mathscr{U}_e$ is an
integral extension of $\Gamma$.
\item Assume that $\Gamma$ is a Harish-Chandra subalgebra in $\mathscr{U}$. Then $\mathscr{U}$ is
a Galois order if and only if $\mathscr{U}_e$ is an integral extension of $\Gamma$.
\end{enumerate}
\end{theorem}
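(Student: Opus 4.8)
The plan is to separate the two implications. The forward one is soft and uses none of the extra hypotheses: if $\mathscr{U}$ is a right (or left) Galois order, apply the defining property to the one-dimensional left-and-right $K$-subspace $Ke = Le\cap\mathscr{K}$ of $\mathscr{K}$; since $Ke\cap\mathscr{U}=\mathscr{U}_e$, this says $\mathscr{U}_e$ is a finitely generated right (resp.\ left) $\Gamma$-module, and as $\Gamma\subseteq\mathscr{U}_e\subseteq K=\Frac(\Gamma)$ by Lemma \ref{lem: Hartwig big lemma}(i), the Cayley--Hamilton trick makes $\mathscr{U}_e$ integral over $\Gamma$; this disposes of ``only if'' in both (1) and (2) at once. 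I would then record the reformulation that drives everything else: the integral closure of $\Gamma=\Lambda^G$ in $K$ equals $\Lambda\cap K=\Gamma$ by Lemma \ref{lem: Hartwig big lemma}(ii), so ``$\mathscr{U}_e$ integral over $\Gamma$'' is equivalent to $\mathscr{U}_e=\Gamma$, and I would prove the converse in that form, following the strategy of \cite{FO10}.

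So assume $\mathscr{U}_e=\Gamma$ and aim to show $\mathscr{U}$ is a right Galois order (the left case being analogous with the opposite twist, i.e.\ using $m(\Gamma)\subseteq\Lambda$). Decompose $\mathscr{K}=\mathscr{L}^G=\bigoplus_{\mathcal{O}}\mathscr{K}_{\mathcal{O}}$ over the $G$-orbits $\mathcal{O}$ of the conjugation action on $\mathscr{M}$, where $\mathscr{K}_{\mathcal{O}}=\big(\bigoplus_{\mu\in\mathcal{O}}L\mu\big)^G$; fixing $\mu_0\in\mathcal{O}$ with $H=\Stab_G(\mu_0)$, the ``$\mu_0$-coefficient'' is a $K$-linear isomorphism $\mathscr{K}_{\mathcal{O}}\xrightarrow{\sim}L^H\mu_0$, so each $\mathscr{K}_{\mathcal{O}}$ is $K$-free of finite rank $[G:H]$. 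Put $\mathscr{U}\langle\mathcal{O}\rangle=\{\text{the }\mathcal{O}\text{-component of }u:u\in\mathscr{U}\}\subseteq\mathscr{K}_{\mathcal{O}}$; right multiplication by $\Gamma\subseteq Ke$ preserves supports, so $\mathscr{U}\langle\mathcal{O}\rangle$ is a right $\Gamma$-submodule of $\mathscr{K}_{\mathcal{O}}$. Every element of $\mathscr{K}$ has finite support, so any finite-dimensional right $K$-subspace $W\subseteq\mathscr{K}$ satisfies $W\subseteq\bigoplus_{\mathcal{O}\in F}\mathscr{K}_{\mathcal{O}}$ for a finite set $F$, whence $W\cap\mathscr{U}\subseteq\sum_{\mathcal{O}\in F}\mathscr{U}\langle\mathcal{O}\rangle$. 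Since $\Gamma$ is a finitely generated commutative $\C$-algebra, hence Noetherian, it is enough to show that \emph{each} $\mathscr{U}\langle\mathcal{O}\rangle$ is a finitely generated right $\Gamma$-module: then $\sum_{\mathcal{O}\in F}\mathscr{U}\langle\mathcal{O}\rangle$ is finitely generated and so is its submodule $W\cap\mathscr{U}$, and likewise on the left.

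To bound $\mathscr{U}\langle\mathcal{O}\rangle$, transport along $\mathscr{K}_{\mathcal{O}}\xrightarrow{\sim}L^H\mu_0$: the right $\Gamma$-action becomes $a\cdot\gamma=a\,\mu_0(\gamma)$ with $\mu_0(\gamma)\in L^H$ (as $H$ centralises $\mu_0$ and fixes $\gamma$), so $\mathscr{U}\langle\mathcal{O}\rangle$ matches the module $N_{\mathcal{O}}=\{u(\mu_0):u\in\mathscr{U}\}$ over $\mu_0(\Gamma)\subseteq L^H$. As $\mu_0(\Gamma)$ is a normal domain with fraction field $\mu_0(K)$ and $L^H/\mu_0(K)$ is a finite extension, the integral closure $R$ of $\mu_0(\Gamma)$ in $L^H$ is a finitely generated $\mu_0(\Gamma)$-module, so it suffices to prove $N_{\mathcal{O}}\subseteq R$, i.e.\ (applying $\mu_0^{-1}$ and Lemma \ref{lem: Hartwig big lemma}(ii)) that $u(\mu_0)\in\mu_0(\Lambda)$ for all $u\in\mathscr{U}$ and all $\mu_0\in\mathscr{M}$. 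I would prove this by induction on $|\supp u|$: the base case $\supp u\subseteq\{e\}$ is precisely $\mathscr{U}_e=\Gamma\subseteq\Lambda$. For the inductive step, use that $\mathscr{U}$ is a Galois ring — so the supports of a generating set generate $\mathscr{M}$ by Proposition \ref{prop: Gamma Ring Alt Conditions}, and $G$-invariance of members of $\mathscr{U}$ yields some $v\in\mathscr{U}$ with $v(\mu_0^{-1})\neq 0$ — and inspect $(vu)(e)=\sum_{\nu\in\mathscr{M}}v(\nu)\,\nu(u(\nu^{-1}))\in\mathscr{U}_e=\Gamma\subseteq\Lambda$: its $\nu=\mu_0^{-1}$ term equals $v(\mu_0^{-1})\,\mu_0^{-1}(u(\mu_0))$, while the other terms involve $u$ at the remaining elements of its support and are absorbed by the inductive hypothesis, provided all the coefficients that arise stay inside a fixed module-finite extension of $\Gamma$ — which is exactly what $m^{-1}(\Gamma)\subseteq\Lambda$ secures in (1) (after translating $\mathcal{O}$ by a power of $m$) and what the two-sided finiteness of $\Gamma v\Gamma$ secures in (2). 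Since $\Lambda$ is integrally closed and Noetherian by Lemma \ref{lem: Hartwig big lemma}(iii), the recursion terminates and gives $u(\mu_0)\in\mu_0(\Lambda)$.

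The step I expect to be the real obstacle is exactly this last induction: cleanly isolating the $\mu_0$-contribution to $(vu)(e)$ from the cross terms $v(\nu)\,\nu(u(\nu^{-1}))$ with $\nu\neq\mu_0^{-1}$, and verifying that the successive multiplications by elements of $\mathscr{M}$ never push a coefficient out of $\Lambda$. This is where the hypothesis on $m$ — respectively the Harish-Chandra property of $\Gamma$ — is genuinely needed rather than cosmetic: without such a stability the procedure need not remain inside a single finitely generated $\Gamma$-module, and indeed the statement fails in that generality, cf.\ Example \ref{ex: Gammatilde is not maximal comm}.
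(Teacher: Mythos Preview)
This theorem is not proved in the present paper: it is quoted verbatim from \cite{FO10} (Theorem~5.2 there) and used as a black box, so there is no in-paper proof to compare your proposal against.

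That said, a few remarks on the proposal itself. Your ``only if'' direction is fine, and the reduction $\mathscr{U}_e$ integral over $\Gamma \Longleftrightarrow \mathscr{U}_e=\Gamma$ via Lemma~\ref{lem: Hartwig big lemma}(ii) is correct and is indeed how \cite{FO10} phrases the hypothesis. The orbit decomposition of $\mathscr{K}$ and the identification $\mathscr{K}_{\mathcal O}\cong L^{H}$ are also standard and match \cite{FO10}. The genuine gap is the inductive step: you assert $(vu)(e)\in\mathscr{U}_e=\Gamma$, but $(vu)(e)$ is only the $e$-\emph{component} of $vu$, not an element of $\mathscr{U}$ in general, so there is no a~priori reason it lies in $\Gamma$. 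Likewise, your inductive hypothesis concerns elements of $\mathscr{U}$ with smaller support, whereas the cross terms $v(\nu)\,\nu(u(\nu^{-1}))$ are bare coefficients in $L$, not elements of $\mathscr{U}$; the sentence ``absorbed by the inductive hypothesis'' hides exactly the work that the hypotheses $m^{-1}(\Gamma)\subseteq\Lambda$ (in (1)) or the Harish-Chandra property (in (2)) are there to do. In \cite{FO10} this is handled not by an induction on $|\supp u|$ but by a direct integrality argument on the coefficient modules, using that $\Gamma u\Gamma$ (resp.\ $\Gamma u$, $u\Gamma$) is module-finite; you have correctly identified \emph{where} the hypothesis enters, but the mechanism you sketch would need to be replaced by that argument to close the gap.
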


The following are some useful results from \cite{Hartwig20}.

\begin{proposition}[\cite{Hartwig20}, Proposition 2.14]\label{prop: Gamma maxl comm in a Galois Order}
$\Gamma$ is maximal commutative in any left or right Galois $\Gamma$-order $\mathscr{U}$ in $\mathscr{K}$.
\end{proposition}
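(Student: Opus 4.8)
The plan is to prove the stronger (and, since $\Gamma$ is commutative, equivalent) assertion that the centralizer $C_{\mathscr{U}}(\Gamma)$ of $\Gamma$ in $\mathscr{U}$ equals $\Gamma$. I would split this into two steps: first, $C_{\mathscr{U}}(\Gamma)\subseteq\mathscr{U}_e$ for \emph{any} Galois $\Gamma$-ring $\mathscr{U}$, using only assumption (A1) and basic Galois theory; second, $\mathscr{U}_e=\Gamma$ whenever $\mathscr{U}$ is in addition a one-sided Galois order, which is where the order hypothesis is spent.

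For the first step, take $u\in\mathscr{U}\subseteq\mathscr{K}\subseteq\mathscr{L}$ and write it uniquely as $u=\sum_{\mu\in\supp u}a_\mu\mu$ with $a_\mu\in L^\times$. Viewing $\gamma\in\Gamma$ as $\gamma e\in\mathscr{L}$, the multiplication in the skew monoid ring gives $u\gamma=\sum_\mu a_\mu\mu(\gamma)\,\mu$ and $\gamma u=\sum_\mu\gamma a_\mu\,\mu$; if $u$ centralizes $\Gamma$, then comparing coefficients and cancelling $a_\mu\neq0$ in the domain $L$ yields $\mu(\gamma)=\gamma$ for every $\mu\in\supp u$ and every $\gamma\in\Gamma$. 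I would then extend such a $\mu$ to an automorphism of $L=\Frac(\Lambda)$; it fixes $K=\Frac(\Gamma)=L^G$ pointwise. Since $G$ acts faithfully on $L$, Artin's lemma gives that $L/K$ is Galois with $\Gal(L/K)=G$, so $\mu$ actually lies in $G$ as an automorphism of $L$, hence of $\Lambda$. Thus $\mu\in\mathscr{M}\cap G$, and since $\mathscr{M}\subseteq\mathscr{M}\mathscr{M}^{-1}$ (as $e\in\mathscr{M}$), assumption (A1) forces $\mu=e$. Hence $u=a_e e$; $G$-invariance of $u$ then gives $a_e\in L^G=K$, so $u\in\mathscr{U}\cap Ke\subseteq\mathscr{U}\cap Le=\mathscr{U}_e$.

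For the second step, note that $\mathscr{U}_e$ is a commutative subring of $\mathscr{K}$ containing $\Gamma$, and since $\mathscr{U}_e\subseteq\mathscr{K}\cap Le=Ke$ we may regard $\Gamma\subseteq\mathscr{U}_e\subseteq K=\Frac(\Gamma)$. The line $W=Ke$ is a $1$-dimensional left (and right) $K$-subspace of $\mathscr{K}$ with $W\cap\mathscr{U}=\mathscr{U}_e$, so the left (respectively right) Galois order condition makes $\mathscr{U}_e$ a finitely generated left (respectively right) $\Gamma$-module; being a module-finite ring extension of $\Gamma$, $\mathscr{U}_e$ is integral over $\Gamma$. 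Finally, by Lemma~\ref{lem: Hartwig big lemma}(ii) the integral closure of $\Gamma$ in $L$ is $\Lambda$, so its integral closure in $K=L^G$ is $\Lambda\cap L^G=\Lambda^G=\Gamma$; therefore $\mathscr{U}_e\subseteq\Gamma$, i.e.\ $\mathscr{U}_e=\Gamma$. Combining the two steps, $C_{\mathscr{U}}(\Gamma)\subseteq\mathscr{U}_e=\Gamma\subseteq C_{\mathscr{U}}(\Gamma)$, which is the claim.

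The one genuine subtlety is the Galois-descent part of the first step: one must recognize that an element of the monoid $\mathscr{M}$ fixing $\Gamma=\Lambda^G$ pointwise must literally belong to $G$ — via Artin's lemma applied to $L/L^G$ — and then eliminate it with the separation axiom (A1). The remainder is bookkeeping in $\mathscr{L}$ together with the standard facts $\Frac(\Gamma)=K$ and that rings of invariants of integrally closed domains are integrally closed.
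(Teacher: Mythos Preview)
Your argument is correct. The paper does not supply its own proof of this proposition --- it is quoted from \cite{Hartwig20} without argument --- so there is nothing to compare against directly. That said, your two-step structure (first $C_{\mathscr{U}}(\Gamma)\subseteq\mathscr{U}_e$ via Artin's lemma and (A1), then $\mathscr{U}_e=\Gamma$ via the finiteness in Definition~\ref{def: Galois Order defintion} combined with Lemma~\ref{lem: Hartwig big lemma}(ii)) is exactly the expected route, and the paper itself records the first step separately as the remark that $C_{\mathscr{U}}(\Gamma)=\mathscr{U}\cap K$ for any Galois $\Gamma$-ring. One cosmetic point: in Step~2 you briefly write $W\cap\mathscr{U}=\mathscr{U}_e$ with $W=Ke$, whereas $\mathscr{U}_e$ is defined as $\mathscr{U}\cap Le$; you implicitly use that $\mathscr{U}\cap Le\subseteq\mathscr{K}\cap Le=Ke$ to see these coincide, which is fine but worth stating explicitly.
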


\begin{lemma}[\cite{Hartwig20}, Lemma 2.16]\label{lem: Order Containment Implication}
Let $\mathscr{U}_1$ and $\mathscr{U}_2$ be two Galois $\Gamma$-rings in
$\mathscr{K}$ such that $\mathscr{U}_1\subseteq\mathscr{U}_2$. If
$\mathscr{U}_2$ is a Galois $\Gamma$-order, then so too is $\mathscr{U}_1$.
\end{lemma}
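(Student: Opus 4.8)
The plan is to reduce the statement to the module-finiteness clause in Definition \ref{def: Galois Order defintion}. Since $\mathscr{U}_1$ is already assumed to be a Galois $\Gamma$-ring in $\mathscr{K}$, the only thing requiring proof is that $\mathscr{U}_1$ is both a left and a right Galois $\Gamma$-order. So I would fix an arbitrary finite-dimensional left $K$-subspace $W\subseteq\mathscr{K}$ and show that $W\cap\mathscr{U}_1$ is a finitely generated left $\Gamma$-module, then repeat the argument verbatim on the right.

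The key observation is that $\Gamma$ is a Noetherian ring. This follows from Lemma \ref{lem: Hartwig big lemma}(iii): $\Lambda$ is a finitely generated $\Gamma$-module and a Noetherian ring, so the Eakin--Nagata theorem applied to the module-finite ring extension $\Gamma\subseteq\Lambda$ yields that $\Gamma$ is Noetherian. Granting this, the inclusion $\mathscr{U}_1\subseteq\mathscr{U}_2$ immediately gives $W\cap\mathscr{U}_1\subseteq W\cap\mathscr{U}_2$; the latter is a finitely generated left $\Gamma$-module because $\mathscr{U}_2$ is a left Galois $\Gamma$-order. Hence $W\cap\mathscr{U}_1$ is a $\Gamma$-submodule of a finitely generated module over a Noetherian ring, and therefore is itself finitely generated. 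The right-handed case is identical, using that $\mathscr{U}_2$ is also a right Galois $\Gamma$-order. Combining both, $\mathscr{U}_1$ is a Galois $\Gamma$-order in $\mathscr{K}$.

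I do not expect any real obstacle here: the inclusion $W\cap\mathscr{U}_1\subseteq W\cap\mathscr{U}_2$ is trivial, and passing from a finitely generated module to its submodules is precisely what Noetherianity of $\Gamma$ buys. The one point I would take care to verify is that assumption (A3) together with Lemma \ref{lem: Hartwig big lemma} genuinely supplies both inputs needed for Eakin--Nagata, namely that $\Lambda$ is module-finite over $\Gamma$ and that $\Lambda$ is a Noetherian ring; once that is recorded, the proof is a two-line consequence.
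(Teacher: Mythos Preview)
Your argument is correct, but note that the paper does not supply its own proof of this lemma: it is quoted from \cite{Hartwig20} without proof, so there is nothing in the present paper to compare against. Your reduction to the Noetherianity of $\Gamma$ together with the trivial inclusion $W\cap\mathscr{U}_1\subseteq W\cap\mathscr{U}_2$ is exactly the standard route.

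One small simplification: you do not need Eakin--Nagata. Assumption (A3) already says that $\Lambda$ is Noetherian \emph{as a $\Gamma$-module}, and since every ideal of $\Gamma$ is in particular a $\Gamma$-submodule of $\Lambda$, it is finitely generated; hence $\Gamma$ is Noetherian directly. Your detour through Lemma~\ref{lem: Hartwig big lemma}(iii) and Eakin--Nagata is valid but unnecessary.
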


It is common to write elements of $L$ on the right side of elements of $\mathscr{M}$.

\begin{definition}
For $X=\sum_{\mu\in\mathscr{M}}\mu\alpha_\mu\in\mathscr{L}$ and $a\in L$
defines the \emph{evaluation of $X$ at $a$} to be
\[
X(a)=\sum_{\mu\in\mathscr{M}}\mu(\alpha_\mu\cdot a)\in L.
\]
Similarly defined is \emph{co-evaluation} by
\[
X^\dagger(a)=\sum_{\mu\in\mathscr{M}}\alpha_\mu\cdot(\mu^{-1}(a))\in L
\]
\end{definition}

The following was independently defined by \cite{Vishnyakova17} called the \emph{universal ring}. 

\begin{definition}
The \emph{standard Galois $\Gamma$-order} is as follows:
\[
\mathscr{K}_\Gamma:=\{X\in\mathscr{K}\mid X(\gamma)\in\Gamma~\forall\gamma\in\Gamma\}.
\]
Similarly we define the \emph{co-standard Galois $\Gamma$-order} by
\[
{}_\Gamma{\mathscr{K}}:=\{X\in\mathscr{K}\mid X^\dagger(\gamma)\in\Gamma
~\forall\gamma\in\Gamma\}.
\]
\end{definition}

\begin{definition}\label{def: principal and co-principal Galois orders}
Let $\mathscr{U}$ be a Galois $\Gamma$-ring in $\mathscr{K}$. If $\mathscr{U}\subseteq\mathscr{K}_\Gamma$ (resp.
$\mathscr{U}\subseteq{}_\Gamma{\mathscr{K}}$), then $\mathscr{U}$ is called a \emph{principal} (resp.
\emph{co-principal}) \emph{Galois $\Gamma$-order}.
\end{definition}

In \cite{Hartwig20} it was shown that any (co-)principal Galois $\Gamma$-order is a
Galois order in the sense of Definition \ref{def: Galois Order defintion}.

%%%%%%%%%%%%%%%%%%%%%%%%%%%%%%%%%%%%%%%%
%            End of Section
%%%%%%%%%%%%%%%%%%%%%%%%%%%%%%%%%%%%%%%%

\section{Defining the Extension}\label{sec: def Agln}

\subsection{Galois order realization of \texorpdfstring{$U(\gl_n)$}{U(gln)}}\label{sec: U(gln) Galois order realization}
We first recall the realization of $U(\gl_n)$ as a Galois $\Gamma$-order from \cite{FO10}.
Let $\Lambda=\C[x_{ki}\mid 1\leq i\leq k\leq n]$ the polynomial algebra
in indeterminates $x_{ki}$,
$\mathbb{S}_n=S_1\times S_2\times\cdots\times S_n$, and
$\Gamma=\Lambda^{\dl\mathbb{S}_n}=\C[e_{ki}\mid1\leq i\leq k\leq n]$.
Here
\begin{equation}
e_{ki}=e_{ki}(x_{k1},\ldots,x_{kk})=\sum_{1\leq j_1<\cdots<j_i\leq k}x_{kj_1}\cdots x_{kj_i}    
\end{equation}
are the elementary symmetric polynomials. Also, let $L=\Frac(\Lambda)$
and $K=\Frac(\Gamma)$. Now, we construct a skew monoid ring. Let
$\mathscr{M}$ be the subgroup of $\Aut(\Lambda)$ generated by $\{\delta^{ki}\}_{1\leq i\leq k\leq n-1}$,
where $\delta^{ki}$ is defined by
\begin{equation}
\delta^{ki}(x_{\ell j})=x_{\ell j}-\delta_{\ell k}\delta_{ij}.
\end{equation}
We observe that $\mathscr{M}\cong\Z^{n(n-1)/2}$. Let $\mathscr{L}=L\#\mathscr{M}$ and
$\mathscr{K}=(L\#\mathscr{M})^{\dl\mathbb{S}_n}$. In \cite{FO10}, the authors describe
an embedding $\varphi\colon U(\gl_n)\rightarrow\mathscr{K}$ defined by
\begin{equation}\label{eq: Ugln map}
\varphi(E_{k}^\pm)=\sum_{i=1}^k(\delta^{ki})^{\pm1}a_{ki}^\pm,\qquad
\varphi(E_{kk})=\sum_{j=1}^k(x_{kj}+j-1)-\sum_{i=1}^{k-1}(x_{k-1,i}+i-1),
\end{equation}
where
\begin{equation}\label{eq: defintion of the rational functions a}
a_{ki}^\pm=\mp\frac{\prod_{j=1}^{k\pm1}(x_{k\pm1,j}-x_{ki})}{\prod_{j\neq i}(x_{kj}-x_{ki})},
\end{equation}
and $E_{k}^+=E_{k,k+1}$, $E_k^-=E_{k+1,k}$ where
$E_{ij}$ denotes the matrix units.
Let $U_n=\varphi(U(\gl_n))$. The algebra $U_n$ is a Galois $\Gamma$-order.

\subsection{Defining \texorpdfstring{$\Anlg{\gl_n}$}{A(gln)}}
Let $\mathbb{A}_n=A_1\times A_2\times\cdots\times A_n$ and
\begin{equation}\label{eq: def of Gamma Tilde}
\widetilde{\Gamma}=\Lambda^{\dl\mathbb{A}_n}
=\C[e_{ki},\mathcal{V}_\ell\mid1\leq i\leq k\leq n,2\leq\ell\leq n].  
\end{equation}
Here
\begin{equation}
\mathcal{V}_\ell=\mathcal{V}_\ell(x_{\ell1},\ldots,x_{\ell\ell})=\prod_{i<j}(x_{\ell i}-x_{\ell j})
\end{equation}
denotes the Vandermonde polynomial in the $\ell$ variables
$x_{\ell1},\ldots,x_{\ell\ell}$. Abstractly, $\widetilde{\Gamma}$ is isomorphic to the following quotient
\[
\C[T_{ki},V_\ell\mid1\leq i\leq k\leq n,2\leq\ell\leq n]/
(V_\ell^2-D_\ell(T_{\ell1},\ldots,T_{\ell\ell})\mid 2\leq\ell\leq n),  
\]
where $T_{ki}$,$V_\ell$ are indeterminates and $D_\ell(T_{\ell1},\ldots,T_{\ell\ell})$
is the Vandermonde discriminant. Also, let
$\widetilde{K}=\Frac(\widetilde{\Gamma})$ and
$\widetilde{\mathscr{K}}=(L\#\mathscr{M})^{\dl\mathbb{A}_n}$.

\begin{definition}\label{def: A(gln)}
Consider the following extension of $U(\gl_n)$, denoted $\Anlg{\gl_n}$, defined as
the subalgebra of $\widetilde{\mathscr{K}}$ generated
by $U_n\cup\{\mathcal{V}_2,\mathcal{V}_3,\cdots,\mathcal{V}_n\}$. Explicitly, $\Anlg{\gl_n}$
is the subalgebra of $\mathscr{L}$ generated by
\[
\begin{array}{ll}
X_{k}^\pm=\sum_{i=1}^k(\delta^{ki})^{\pm1}a_{ki}^\pm &\text{ for }k=1,\ldots,n-1,\\
X_{kk}=\sum_{j=1}^k(x_{kj}+j-1)-\sum_{i=1}^{k-1}(x_{k-1,i}+i-1) &\text{ for }k=1,\ldots,n,\\
\mathcal{V}_k=\mathcal{V}_k(x_{k1},\ldots,x_{kk})=\prod_{i<j}(x_{ki}-x_{kj}) &\text{ for }k=1,\ldots,n-1,
\end{array}
\]
where $a_{ki}^\pm$ are defined in (\ref{eq: defintion of the rational functions a}).
\end{definition}

The following proposition lists some basic properties of $\Anlg{\gl_n}$.
\begin{proposition}\label{prop: basic properties of A(gln)}
\item
\begin{enumerate}[\rm (i)]
\item $U(\gl_n)\cong U_n\subset\Anlg{\gl_n}$. \label{item: A(gln) contains U(gln)}
\item $\Anlg{\gl_n}$ is a Galois $\widetilde{\Gamma}$-ring. \label{item: A(gln) is a Galois ring}
\item $\mathcal{V}_n$ is central in $\Anlg{\gl_n}$. \label{item: Vn is central in A(gln)}
\item $Z(\Anlg{\gl_n})\cong\C[x_1,\ldots,x_n]^{A_n}$.\label{item: HC isomorphism for A(gln)}
\item There is a chain of subalgebras
$\Anlg{\gl_1}\subset\Anlg{\gl_2}\subset\cdots\subset\Anlg{\gl_n}$.\label{item: subalgebra chain for A(gln)}
\item $\Anlg{\gl_n}$ is the minimal extension of $U(\gl_n)$ with properties
\ref{item: HC isomorphism for A(gln)} and
\ref{item: subalgebra chain for A(gln)}.\label{item: A(gln) is minimal extension with given properties}
\end{enumerate}
\end{proposition}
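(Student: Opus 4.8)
The plan is to establish the six items roughly in the order listed, since several of the later ones depend on the earlier ones. For (i), the map $\varphi$ of \eqref{eq: Ugln map} is already known from \cite{FO10} to be an embedding, so $U(\gl_n)\cong U_n$; the containment $U_n\subset\Anlg{\gl_n}$ is immediate from Definition \ref{def: A(gln)}, since $\Anlg{\gl_n}$ is generated by $U_n$ together with the $\mathcal{V}_k$. For (ii), I would invoke Proposition \ref{prop: Gamma Ring Alt Conditions}: the generators $X_k^\pm$ have supports $\{(\delta^{ki})^{\pm1}\}$ which together generate $\mathscr{M}\cong\Z^{n(n-1)/2}$ (this is exactly why $U_n$ is a Galois $\Gamma$-ring), and adding the $\mathcal{V}_k\in Le$ does not shrink the support set. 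One must check that $\Anlg{\gl_n}$ is a $\widetilde{\Gamma}$-subring of $\widetilde{\mathscr{K}}=(L\#\mathscr{M})^{\mathbb{A}_n}$, i.e. that $\widetilde{\Gamma}\subseteq\Anlg{\gl_n}$: the $e_{ki}$ lie in $U_n$ (they are the images of the Gelfand--Tsetlin generators under $\varphi$, up to the standard shift), and the $\mathcal{V}_\ell$ are generators by fiat, so $\widetilde{\Gamma}=\C[e_{ki},\mathcal{V}_\ell]$ is contained in $\Anlg{\gl_n}$. Then $\widetilde{K}\Anlg{\gl_n}=\Anlg{\gl_n}\widetilde{K}=\widetilde{\mathscr{K}}$ follows because $K\subseteq\widetilde K$ and $KU_n=U_nK=\mathscr{K}$, while $\widetilde{\mathscr{K}}$ is obtained from $\mathscr{K}$ by the same extension of scalars that produces $\widetilde K$ from $K$ (the $\mathbb{A}_n$-invariants versus $\mathbb{S}_n$-invariants differ precisely by adjoining the $\mathcal{V}_\ell$).

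For (iii), $\mathcal{V}_n\in Le$ is a rational (in fact polynomial) function in the top-row variables $x_{n1},\dots,x_{nn}$, which are fixed by every $\delta^{ki}$ with $k\le n-1$ (the only shift operators present), hence $\mathcal{V}_n$ commutes with every $X_k^\pm$ and trivially with the $X_{kk}$ and the other $\mathcal{V}_k$; being $A_n$-invariant it lies in $\widetilde{\mathscr{K}}$. So $\mathcal{V}_n\in Z(\Anlg{\gl_n})$. For (iv), by Theorem \ref{thm: Center of a Galois Ring}, $Z(\Anlg{\gl_n})=\Anlg{\gl_n}\cap\widetilde{K}^{\mathscr{M}}$. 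Now $\widetilde{K}^{\mathscr{M}}$ consists of $\mathscr{M}$-invariant, $\mathbb{A}_n$-invariant rational functions; the $\mathscr{M}$-invariant rational functions are generated by the "shifted" top-row variables, and intersecting with $\Anlg{\gl_n}$ picks out the polynomial ones, giving (after the Harish--Chandra shift $x_j\mapsto x_{nj}+j-1$, identifying with a polynomial ring $\C[x_1,\dots,x_n]$) exactly $\C[x_1,\dots,x_n]^{A_n}$. Concretely, the map $\widetilde\varphi_{\rm HC}$ is the extension of $\varphi_{\rm HC}$ sending the class of $\mathcal{V}_n$ to the Vandermonde polynomial; one checks it is well-defined, injective (it is a restriction of an isomorphism on $\widetilde K^{\mathscr M}$), and surjective because $\C[x_1,\dots,x_n]^{A_n}=\C[x_1,\dots,x_n]^{S_n}[\Delta]$ where $\Delta$ is the Vandermonde, and $\C[x_1,\dots,x_n]^{S_n}=\varphi_{\rm HC}(Z(U(\gl_n)))$ is already in the image.

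For (v), the chain follows because the generators of $\Anlg{\gl_k}$ — namely $X_j^\pm, X_{jj}, \mathcal{V}_j$ for indices up to $k$ — are among the generators of $\Anlg{\gl_{k+1}}$; one only needs the compatibility of the embeddings $U(\gl_k)\hookrightarrow U(\gl_{k+1})$ with $\varphi$, which is part of the Futorny--Ovsienko construction. Finally, (vi) is the step I expect to be the main obstacle, since "minimality" must be formulated carefully: the claim is that any subalgebra $\mathscr{B}$ with $U(\gl_n)\subseteq\mathscr{B}\subseteq\widetilde{\mathscr{K}}$ satisfying the analogue of (iv) (i.e. $Z(\mathscr{B})\cong\C[x_1,\dots,x_n]^{A_n}$ via the extended Harish--Chandra map) and the analogue of (v) (a compatible chain $\mathscr{B}_1\subset\cdots\subset\mathscr{B}_n=\mathscr{B}$ with $\mathscr{B}_k\supseteq U(\gl_k)$) must contain $\Anlg{\gl_n}$. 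The argument I would give proceeds by induction on $n$: property (iv) for $\mathscr{B}_k$ forces $Z(\mathscr{B}_k)$ to contain an element mapping to the Vandermonde in $\C[x_1,\dots,x_k]^{A_k}$; since $Z(\mathscr{B}_k)=\mathscr{B}_k\cap\widetilde K_k^{\mathscr M}$ by Theorem \ref{thm: Center of a Galois Ring} and the only $\mathscr{M}$-invariant, $A_k$-but-not-$S_k$-invariant candidate realizing that class is $\mathcal{V}_k$ itself (up to adding $S_k$-invariants already present from $U(\gl_k)$), we get $\mathcal{V}_k\in\mathscr{B}_k\subseteq\mathscr{B}$. Ranging over $k=2,\dots,n$ via the chain gives $\mathcal{V}_2,\dots,\mathcal{V}_n\in\mathscr{B}$, hence $\Anlg{\gl_n}\subseteq\mathscr{B}$. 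The delicate point is ruling out that some other element of $\mathscr{B}_k$ (not lying in $Le$) could supply the required central element; this is handled by noting that a central element must lie in $Le$ anyway (it is $\mathscr{M}$-invariant, and the only $\mathscr{M}$-fixed points of the monoid action relevant here force the support to be trivial after the standard support-degree argument of \cite{FO10}), so the comparison really does take place inside the commutative ring $L^{\mathbb{A}_n}\cap(\text{polynomials})$, where the identification of $\mathcal{V}_k$ as the unique new generator is clean.
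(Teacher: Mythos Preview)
Your treatment of (i)--(iii) and (v) matches the paper's proof; for (ii) you even add detail (that $\widetilde\Gamma\subseteq\Anlg{\gl_n}$) which the paper leaves implicit.

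For (iv) there is one step you assert without justification: you say ``intersecting with $\Anlg{\gl_n}$ picks out the polynomial ones,'' i.e.\ that any element of $\Anlg{\gl_n}\cap\widetilde K^{\mathscr M}$ is a \emph{polynomial} (not merely a rational function) in $x_{n1},\dots,x_{nn}$. The paper supplies this by observing $\Anlg{\gl_n}\subset(L'\#\mathscr M)^{\mathbb A_n}$ with $L'=\C(x_{ki}\mid 1\le i\le k\le n-1)[x_{n1},\dots,x_{nn}]$, which forces polynomial dependence on the top row. Without this, nothing in your argument rules out rational $A_n$-invariants in the $x_{n\bullet}$ from appearing in the center. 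Once this is in place, the paper then identifies $Z(\Anlg{\gl_n})=\C\langle Z(U_n),\mathcal V_n\rangle$ and defines $\widetilde\varphi_{\rm HC}$ exactly as you do.

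For (vi) your route genuinely differs from the paper's. The paper argues by induction that any extension $\mathscr A$ satisfying (iv) and (v) is \emph{isomorphic} to $\Anlg{\gl_n}$: it extracts a central element $\mathcal V\in\mathscr A$ from (iv) and writes down a map $\tau\colon\mathscr A\to\Anlg{\gl_n}$ sending $\mathcal V\mapsto\mathcal V_n$, without checking that $\tau$ is bijective or that $\mathscr A$ is generated by $U(\gl_n)$ together with the $\mathcal V_\ell$. You instead prove \emph{containment}: any such $\mathscr B$ must already contain each $\mathcal V_k$, hence $\Anlg{\gl_n}\subseteq\mathscr B$. This is the more natural reading of ``minimal,'' and your argument is more explicit about why the required central element is $\mathcal V_k$ on the nose (central elements lie in $Le$, and modulo the $S_k$-invariants coming from $Z(U(\gl_k))$ the only $A_k$-invariant preimage of the Vandermonde is $\mathcal V_k$). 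One small point to make explicit: your appeal to Theorem~\ref{thm: Center of a Galois Ring} for $\mathscr B_k$ presupposes that $\mathscr B_k$ is a Galois ring; this does follow from $U(\gl_k)\subseteq\mathscr B_k\subseteq\widetilde{\mathscr K}_k$ via Proposition~\ref{prop: Gamma Ring Alt Conditions}, but should be stated.
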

\begin{proof}
\ref{item: A(gln) contains U(gln)} Clear because $\varphi$ is injective and $\Anlg{\gl_n}$
contains $\varphi(E_{k}^\pm),\varphi(E_kk)$.\\
\ref{item: A(gln) is a Galois ring} Define $\mathscr{X}$ as follows:
\[
\mathscr{X}=\{X_i^\pm,X_{ii},\mathcal{V}_j\mid1\leq i\leq n, 2\leq j\leq n\}.
\]
Since $X_i^\pm\in\mathscr{X}$, it is clear that $\bigcup_{u\in\mathscr{X}}\supp u$ generates
$\mathscr{M}$. Thus, $\Anlg{\gl_n}$ is a Galois $\widetilde{\Gamma}$-ring for every $n\geq1$ by
Proposition \ref{prop: Gamma Ring Alt Conditions}.\\
\ref{item: Vn is central in A(gln)} As $\delta^{ki}$ fixes $x_{\ell j}$ iff $\ell\neq k$
and $k\neq n$, it follows that $\mathcal{V}_n$ is central in $\Anlg{\gl_n}$.\\
\ref{item: HC isomorphism for A(gln)} We first show that
$Z(\Anlg{\gl_n}=\C\langle Z(U_n),\mathcal{V}_n\rangle$.
$\C\langle Z(U_n),\mathcal{V}_n)\rangle\subseteq Z(\Anlg{\gl_n})$ is clear. Next, we observe that
$\Anlg{\gl_n}\subset (L^\prime\#\mathscr{M})^{\mathbb{A}_n}$,
where $L^\prime=\C(x_{ki}\mid1\leq i\leq k\leq n-1)[x_{n1},\ldots,x_{nn}]$.
By Theorem \ref{thm: Center of a Galois Ring}, we have
\[
Z(\Anlg{\gl_n})=\Anlg{\gl_n}\cap\widetilde{K}^{\mathscr{M}}\subseteq
(L^\prime\#\mathscr{M})^{\mathbb{A}_n}\cap\widetilde{K}^{\mathscr{M}}\subseteq\C\langle Z(U_n),\mathcal{V}_n\rangle.
\]
Consider the
Harish-Chandra homomorphism
$\varphi_{\rm HC}\colon Z(U(\gl_n))\rightarrow\C[x_1,\ldots,x_n]^{S_n}$.
We can define an extension of this map
$\widetilde{\varphi}_{\rm HC}\colon Z(\Anlg{\gl_n})\rightarrow\C[x_1,\ldots,x_n]$
as follows:
\begin{equation}
\widetilde{\varphi}_{\rm HC}(X)=
\begin{cases}
\varphi_{\rm HC}(\varphi^{-1}(X)), & X\in Z(U_n),\\
\dl \prod_{1=i<j=n}(x_i-x_j), & X=\mathcal{V}_n.
\end{cases}
\end{equation}
In conjuction with Chevalley's Theorem (see \cite{Humpreys78}),
$\varphi_{HC}$ provides an isomorphism with $\C[x_1,\ldots,x_n]^{S_n}$. The
claim follows by recalling that $\C[x_1,\ldots,x_n]^{A_n}$ is generated
by the symmetric polynomials and the Vandermonde polynomial.\\
\ref{item: subalgebra chain for A(gln)} Follows from the definition.\\
\ref{item: A(gln) is minimal extension with given properties} We prove this
result by induction on $n$. Since $\Anlg{\gl_1}=U(\gl_1)$, the base step is clear.
Assuming the claim holds for $\Anlg{\gl_{n-1}}$, now consider an extension $\mathscr{A}$
of $U(\gl_n)$ satisfying \ref{item: HC isomorphism for A(gln)} and
\ref{item: subalgebra chain for A(gln)}. By \ref{item: subalgebra chain for A(gln)},
$\mathscr{A}$ contains $\mathcal{V}_\ell$ for $\ell=1,\ldots,n-1$, and it contains $U(\gl_n)$
by definition. From \ref{item: HC isomorphism for A(gln)} we get an element $\mathcal{V}$ that
is central in $\mathscr{A}$ that maps to $\prod_{i<j}(x_i-x_j)\in\C[x_1,\ldots,x_n]^{A_n}$.
This allows us to define an isomorphism $\tau\colon\mathscr{A}\rightarrow\Anlg{\gl_n}$
by sending $\{U(\gl_n),\mathcal{V}_\ell\mid\ell=1,\ldots,n-1\}$ to themselves
and $\mathcal{V}\mapsto\mathcal{V}_n$.
\end{proof}

\begin{remark}
In \cite{FSAlgofInvDifOp} another Galois algebra is described in the invariants of
a Weyl algebra with respect to a single alternating group in Corollary 24 in \cite{FSAlgofInvDifOp}.
\end{remark}

\section{The Structure of \texorpdfstring{$\Anlg{\gl_2}$}{A(gl2)}}\label{sec: n=2}

In this section, we find a presentation for $\Anlg{\gl_2}$ as an extension of
$U(\gl_2)$ and as a generalized Weyl algebra as well as prove that it is a Galois $\widetilde{\Gamma}$-order.

\begin{lemma}\label{lem: Case 2 two part lemma}
\text{}
\begin{enumerate}[\rm (i)]
\item $\mathcal{V}_2$ commutes with every element of $U_2$. \label{item: v2 commutes}
\item $\Anlg{\gl_2}=U_2\oplus(U_2\cdot\mathcal{V}_2)$ \label{item: direct sum decomp}
\end{enumerate}
\end{lemma}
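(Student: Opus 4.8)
The plan is to prove (i) first, since (ii) will follow from it together with the structure of $U_2$ as a rank-one object over its Gelfand–Tsetlin subalgebra. For (i), I would work inside the skew monoid ring $\mathscr{L} = L \# \mathscr{M}$, where for $n=2$ the monoid $\mathscr{M} \cong \Z$ is generated by a single shift $\delta := \delta^{11}$, acting by $\delta(x_{11}) = x_{11}-1$ and fixing $x_{21}, x_{22}$. Note that $\mathcal{V}_2 = \mathcal{V}_2(x_{21},x_{22}) = x_{21}-x_{22}$ depends only on the second-row variables, which are fixed by $\delta$. So $\mathcal{V}_2$ lies in $Le$ and is $\delta$-invariant. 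It therefore suffices to check that $\mathcal{V}_2$ commutes with each of the three types of generators of $U_2$: $X_{11}$, $X_{22}$, and $X_1^{\pm}$. Since $X_{11}, X_{22} \in L e$ and $L$ is commutative, $\mathcal{V}_2$ commutes with those automatically. For $X_1^{\pm} = (\delta^{\pm 1}) a_{11}^{\pm}$, the commutator $[\mathcal{V}_2, X_1^{\pm}] = \mathcal{V}_2 \delta^{\pm 1} a_{11}^{\pm} - \delta^{\pm 1} a_{11}^{\pm}\mathcal{V}_2 = \big(\mathcal{V}_2 - \delta^{\pm 1}(\mathcal{V}_2)\big)\delta^{\pm 1} a_{11}^{\pm}$, and $\delta^{\pm 1}(\mathcal{V}_2) = \mathcal{V}_2$ because $\mathcal{V}_2$ only involves $x_{21}, x_{22}$. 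Hence the commutator vanishes. (Equivalently: one can invoke Proposition \ref{prop: basic properties of A(gln)}\ref{item: Vn is central in A(gln)} with $n=2$, which already states $\mathcal{V}_2 = \mathcal{V}_n$ is central in $\Anlg{\gl_2}$; but the direct computation above is cleaner and self-contained, and I would present that.)

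For (ii), I would argue $\Anlg{\gl_2} = U_2 + U_2\mathcal{V}_2$ and that the sum is direct. For the spanning statement: by definition $\Anlg{\gl_2}$ is generated as an algebra by $U_2 \cup \{\mathcal{V}_2\}$, and since $\mathcal{V}_2$ commutes with everything in $U_2$ by part (i), every monomial in the generators can be rewritten with all copies of $\mathcal{V}_2$ collected on the right, giving an element of $\sum_{m\ge 0} U_2 \mathcal{V}_2^m$. The key reduction is then $\mathcal{V}_2^2 \in U_2$: indeed $\mathcal{V}_2^2 = (x_{21}-x_{22})^2 = e_{21}^2 - 4 e_{22}$ where $e_{21} = x_{21}+x_{22}$, $e_{22} = x_{21}x_{22}$ are the degree-one and degree-two elementary symmetric polynomials in row two, and these lie in $\Gamma \subseteq U_2$ (they are, up to the standard shift in \eqref{eq: Ugln map}, expressible via the Gelfand–Tsetlin generators; concretely $e_{21}$ and $e_{22}$ are polynomial in $\varphi(E_{22})$, $\varphi(E_{21}E_{12})$-type elements). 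So $U_2 + U_2\mathcal{V}_2$ is already closed under right multiplication by $\mathcal{V}_2$, hence equals $\Anlg{\gl_2}$.

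Directness of the sum $U_2 \oplus U_2\mathcal{V}_2$ is the step requiring a genuine argument. Here I would use the $\mathbb{S}_2 = S_1 \times S_2$ versus $\mathbb{A}_2 = A_1\times A_2$ grading: since $A_2 = \{1\}$ is trivial but $S_2 = \Z/2$ acts by swapping $x_{21} \leftrightarrow x_{22}$, the nontrivial element $\sigma \in S_2$ acts on $\widetilde{\mathscr{K}} = (L\#\mathscr{M})^{\mathbb{A}_2}$ and gives a $\Z/2$-grading $\widetilde{\mathscr{K}} = \widetilde{\mathscr{K}}^+ \oplus \widetilde{\mathscr{K}}^-$ into $\sigma$-invariants and $\sigma$-anti-invariants. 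One checks $U_2 = \mathscr{K}^+ \cap \Anlg{\gl_2}$ lands in the $+1$ eigenspace (all the defining formulas \eqref{eq: Ugln map}, \eqref{eq: defintion of the rational functions a} are manifestly symmetric under $x_{21}\leftrightarrow x_{22}$), while $\sigma(\mathcal{V}_2) = -\mathcal{V}_2$, so $U_2\mathcal{V}_2$ lands in the $-1$ eigenspace. A nonzero element of $U_2 \cap U_2\mathcal{V}_2$ would be both symmetric and anti-symmetric, forcing it to be zero — but one must also rule out that $U_2\mathcal{V}_2 = 0$, i.e. that right multiplication by $\mathcal{V}_2$ is injective on $U_2$; this is immediate since $\mathscr{L}$ is a domain (it is a skew monoid ring over the domain $L$ by the abelian group $\mathscr{M}$, hence has no zero divisors) and $\mathcal{V}_2 \neq 0$. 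I expect the $\Z/2$-eigenspace bookkeeping to be the main obstacle only in the sense that it must be set up carefully; the computations themselves are routine.
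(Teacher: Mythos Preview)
Your proposal is correct and follows essentially the same approach as the paper: part (i) via centrality of $\mathcal{V}_n$ (the paper simply cites Proposition~\ref{prop: basic properties of A(gln)}\ref{item: Vn is central in A(gln)}, while you give the equivalent direct check), and part (ii) via $\mathcal{V}_2^2\in\Gamma\subset U_2$ for spanning together with the $\pm1$-eigenspace decomposition under the transposition $(12)_2\in\mathbb{S}_2$ for directness. Your argument is slightly more explicit in justifying $\mathcal{V}_2^2\in U_2$ and in noting that $\mathscr{L}$ is a domain, but the strategy is the same; one small caution is that the equality $U_2=\widetilde{\mathscr{K}}^+\cap\Anlg{\gl_2}$ you wrote in passing is stronger than needed here (it is the content of Proposition~\ref{prop: S2 invariants of Agl2 is U2}) --- for directness you only need the containments $U_2\subseteq\widetilde{\mathscr{K}}^+$ and $U_2\mathcal{V}_2\subseteq\widetilde{\mathscr{K}}^-$, which is exactly what the paper uses.
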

\begin{proof}
\ref{item: v2 commutes} Follows by Proposition \ref{prop: basic properties of A(gln)} \ref{item: Vn is central in A(gln)}.\\
\ref{item: direct sum decomp} Since $\mathcal{V}_2$ commutes with everything in $U_2$,
\[
\Anlg{\gl_2}=\left\{\sum_{j=0}^\infty u_j\mathcal{V}_2^j\mid u_j\in U_2\text{, at most finitely many }u_j\neq0\right\}.
\]
Since $\mathcal{V}_2^2\in U_2$, $\Anlg{\gl_2}=U_2+U_2\cdot\mathcal{V}_2$. Now consider
$(12)_2:=((1),(12))\in\mathbb{S}_2$ acting on $\mathscr{L}$ by automorphisms. We have,
\[
(12)_2\vert_{U_2}=\Id\vert_{U_2},\quad(12)_2\vert_{U_2\cdot\mathcal{V}_2}=(-1)\cdot\Id\vert_{U_2\cdot\mathcal{V}_2}.
\]
This implies that $\Anlg{\gl_2}=U_2\oplus(U_2\cdot\mathcal{V}_2)$.
\end{proof}

\begin{definition}
The \emph{$k$-th Gelfand invariant for $\gl_n$} is defined as follows
\[
c_{nk}=\sum_{(i_1,i_2,\ldots,i_d)\in[n]^d}E_{i_1,i_2}E_{i_2,i_3}\cdots E_{i_{d-1},i_d}
E_{i_d,i_1}.
\]
There are $n$ such Gelfand invariants for $\gl_n$, and they generate the center
of $U(\gl_n)$.
\end{definition}

We now give a presentation for $\Anlg{\gl_2}$ in terms of $U(\gl_2)$.

\begin{proposition}\label{prop: n=2 isomorphism}
There is an isomorphism
\begin{equation}
\widetilde{\varphi}\colon
\frac{U(\gl_2)[T_2]}{\left(T_2^2-(-c_{21}^2+2c_{22}+1)\right)}\rightarrow\Anlg{\gl_2},
\end{equation}
where $T_2$ is an indeterminate and $c_{2i}$ are the Gelfand invariants for $\gl_2$. Explicitly
\begin{equation}\label{eq: n=2 iso images}
\widetilde{\varphi}\vert_{U(\gl_2)}=\varphi,\quad\widetilde{\varphi}(T_2)=\mathcal{V}_2,
\end{equation}
where $\varphi$ is the embedding from {\rm (\ref{eq: Ugln map})}.
\end{proposition}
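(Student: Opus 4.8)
The plan is to exhibit $\widetilde\varphi$ explicitly and check it is a well-defined isomorphism by first understanding the structure of $\Anlg{\gl_2}$ via Lemma \ref{lem: Case 2 two part lemma}, then identifying the image of $\mathcal{V}_2^2$ inside $U_2 \cong U(\gl_2)$. By Lemma \ref{lem: Case 2 two part lemma}\ref{item: direct sum decomp}, $\Anlg{\gl_2} = U_2 \oplus U_2\cdot\mathcal{V}_2$ as a left $U_2$-module, and since $\mathcal{V}_2$ is central (Proposition \ref{prop: basic properties of A(gln)}\ref{item: Vn is central in A(gln)}) this is also a ring decomposition in which $\mathcal{V}_2$ commutes with all of $U_2$. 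Hence $\Anlg{\gl_2}$ is generated over $U_2$ by the single central element $\mathcal{V}_2$ subject to the relation $\mathcal{V}_2^2 = w$ for $w := \mathcal{V}_2^2 \in U_2$, and $\{1, \mathcal{V}_2\}$ is a free $U_2$-basis. This immediately gives a surjective algebra map $U(\gl_2)[T_2]/(T_2^2 - \varphi^{-1}(w)) \to \Anlg{\gl_2}$ sending $U(\gl_2)$ via $\varphi$ and $T_2 \mapsto \mathcal{V}_2$; it is well-defined because $T_2$ is central in the source and the defining relation is respected, and it is injective because the source is also free of rank $2$ over $U(\gl_2)$ with basis $\{1, T_2\}$ and the map is the identity on each graded piece. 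So the entire content reduces to the computation $\varphi^{-1}(\mathcal{V}_2^2) = -c_{21}^2 + 2c_{22} + 1$.

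For that computation I would work inside $\mathscr{L}$. We have $\mathcal{V}_2 = x_{21} - x_{22}$, so $\mathcal{V}_2^2 = (x_{21}-x_{22})^2 = (x_{21}+x_{22})^2 - 4x_{21}x_{22} = e_{21}^2 - 4e_{22}$, an element of $\widetilde\Gamma$ that actually lies in $\Gamma = \Lambda^{\mathbb{S}_n}$, hence in $U_2 = \varphi(U(\gl_2))$. The task is then to express $e_{21}^2 - 4 e_{22}$ in terms of the images under $\varphi$ of the Gelfand invariants $c_{21} = E_{11}+E_{22}$ and $c_{22} = E_{11}^2 + E_{22}^2 + E_{12}E_{21} + E_{21}E_{12}$, together with the images of $E_{11}, E_{22}$. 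From \eqref{eq: Ugln map}, $\varphi(E_{22}) = (x_{21}+1) + (x_{22}) - x_{11} $ wait — more precisely $\varphi(E_{11}) = x_{11}$, $\varphi(E_{11}+E_{22}) = \varphi(E_{11}) + \varphi(E_{22}) = (x_{21} + x_{22} + 1) = e_{21} + 1$, so $e_{21} = \varphi(c_{21}) - 1$. Thus $e_{21}^2 = \varphi(c_{21})^2 - 2\varphi(c_{21}) + 1$. It remains to compute $\varphi(c_{22})$ and solve for $e_{22}$; here one uses the known formula for the image of the quadratic Casimir/Gelfand invariant under the Gelfand–Tsetlin realization, or computes $\varphi(E_{12}E_{21}) = \varphi(E_{12})\varphi(E_{21})$ directly using the rational functions $a_{1i}^{\pm}$ in \eqref{eq: defintion of the rational functions a} and the shift operators $\delta^{11}$, and likewise $\varphi(E_{21}E_{12})$, then assembles $\varphi(c_{22})$ and checks that the off-diagonal contributions combine with $\varphi(E_{11})^2 + \varphi(E_{22})^2$ to yield exactly $\tfrac12(e_{21}^2 + 2e_{22}) + (\text{linear and constant terms})$, i.e. that $-\varphi(c_{21})^2 + 2\varphi(c_{22}) + 1 = e_{21}^2 - 4e_{22} = \varphi(\mathcal{V}_2^2)$.

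The main obstacle is precisely this last explicit verification: carrying out the product $\varphi(E_{12})\varphi(E_{21})$ in the skew monoid ring, where $\delta^{11}$ and $(\delta^{11})^{-1}$ shift $x_{11}$, and keeping careful track of the resulting rational functions in $x_{11}, x_{21}, x_{22}$ so that all denominators cancel and one lands back in $\C[e_{21}, e_{22}]$. This is a bounded, purely computational check — for $\gl_2$ the relevant functions $a_{11}^{\pm} = \mp \frac{(x_{21}-x_{11})(x_{22}-x_{11})}{1}$ (empty product in the denominator since $k=1$) are polynomial, which keeps things manageable — but it is where the real work lies; everything else is formal bookkeeping about free module decompositions. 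Once the identity $\varphi(\mathcal{V}_2^2) = \varphi(-c_{21}^2 + 2c_{22} + 1)$ is confirmed (equivalently $\varphi^{-1}(\mathcal{V}_2^2) = -c_{21}^2 + 2c_{22}+1$, using injectivity of $\varphi$), the isomorphism statement follows as described above, and \eqref{eq: n=2 iso images} records the definition of $\widetilde\varphi$ on generators.
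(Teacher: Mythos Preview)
Your approach is correct and essentially identical to the paper's: both use Lemma~\ref{lem: Case 2 two part lemma}\ref{item: direct sum decomp} to identify $\Anlg{\gl_2}$ as a free rank-$2$ $U_2$-module with basis $\{1,\mathcal{V}_2\}$, reduce the isomorphism to the single check $\varphi(-c_{21}^2+2c_{22}+1)=\mathcal{V}_2^2$, and then verify this by computing $\varphi(c_{21})$ and $\varphi(c_{22})$ directly. The paper simply records $\varphi(c_{22})=x_{21}^2+x_{22}^2+x_{21}+x_{22}$ and finishes the algebra in two lines, so the ``main obstacle'' you flag is lighter than you suggest---no rational functions survive, since (as you note) $a_{11}^\pm$ are already polynomial for $k=1$.
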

\begin{proof}
Let $p(T_2)=T_2^2-(-c_{21}^2+2c_{22}-1)\in U(\gl_2)[T_2].$ Since $p(T_2)$ is degree two, $U(\gl_2)[T_2]/(p(T_2))$
is free of rank 2 as a left $U(\gl_2)$-module with basis $\{1,\vl{T_2}\}$ where $\vl{T_2}=T_2+(p(T_2))$. It follows Lemma
\ref{lem: Case 2 two part lemma} \ref{item: direct sum decomp} that $\Anlg{gl_2}$ is also free of rank 2 with basis
$\{1,\mathcal{V}_2\}$ via the isomorphism $\varphi$ in (\ref{eq: Ugln map}).
Therefore there is an isomorphism $\widetilde{\varphi}\colon U(\gl_2)[T_2]/(p(T_2))\rightarrow \Anlg{\gl_2}$
as $U(\gl_2)$-modules sending 1 to 1 and $\vl{T_2}$ to $\mathcal{V}_2$. Thus, it suffices to show that
$\widetilde{\varphi}(\vl{T_2}^2)=\mathcal{V}_2^2$.

\noindent To show this, we calculate the images of $c_{2i}$ under $\varphi$:
\begin{align*}
\varphi(c_{21})&=\varphi\big(E_{11}+E_{22}\big)=(x_{11})+(x_{21}+x_{22}-x_{11}+1)=x_{21}+x_{22}+1,\\
\varphi(c_{22})&=\varphi\big(E_{11}^2+E_1^+E_1^-+E_1^-E_1^++E_{22}^2\big)=x_{21}^2+x_{22}^2+x_{21}+x_{22}.
\end{align*}
As such,
\begin{align*}
\widetilde{\varphi}(\vl{T_2}^2)=\widetilde{\varphi}(-c_{21}^2+2c_{22}+1)&=-\varphi(c_{21})^2+2\varphi(c_{22})+1\\
&=-(x_{21}+x_{22}+1)^2+2(x_{21}^2+x_{22}^2+x_{21}+x_{22})+1\\
&=(x_{21}-x_{22})^2=\mathcal{V}_2^2.
\end{align*}
Therefore, $\widetilde{\varphi}$ is an
algebra isomorphism.
\end{proof}

\begin{theorem}\label{thm: Agl2 is a Galois order}
$\Anlg{\gl_2}$ is a Galois $\widetilde{\Gamma}$-order.
\end{theorem}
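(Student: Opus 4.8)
The plan is to invoke Theorem \ref{thm: Galois Order condition}(2), which reduces the claim to checking two things: that $\widetilde{\Gamma}$ is a Harish-Chandra subalgebra in $\Anlg{\gl_2}$, and that $\Anlg{\gl_2}_e = \Anlg{\gl_2} \cap Le$ is an integral extension of $\widetilde{\Gamma}$. We already know from Proposition \ref{prop: basic properties of A(gln)}\ref{item: A(gln) is a Galois ring} that $\Anlg{\gl_2}$ is a Galois $\widetilde{\Gamma}$-ring, that $\widetilde{\Gamma}$ is finitely generated, and that $\mathscr{M} \cong \Z$ is a group, so the hypotheses of that theorem are in place. An alternative, possibly cleaner route is to use the isomorphism $\widetilde{\varphi}$ from Proposition \ref{prop: n=2 isomorphism}: since $U_2$ is a Galois $\Gamma$-order and $\Anlg{\gl_2}$ is a rank-$2$ free extension of it with $\mathcal{V}_2$ satisfying the monic quadratic $\mathcal{V}_2^2 = -c_{21}^2 + 2c_{22} + 1 \in \Gamma \subset \widetilde{\Gamma}$, one expects integrality and the Harish-Chandra property to descend along this extension. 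I would pursue the direct verification via Theorem \ref{thm: Galois Order condition}(2) as the primary argument, using Lemma \ref{lem: Case 2 two part lemma} throughout.

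First I would establish that $\Anlg{\gl_2}_e$ is an integral extension of $\widetilde{\Gamma}$. By Lemma \ref{lem: Case 2 two part lemma}\ref{item: direct sum decomp}, $\Anlg{\gl_2} = U_2 \oplus (U_2 \cdot \mathcal{V}_2)$, and intersecting with $Le$ (the elements supported on the identity of $\mathscr{M}$) gives $\Anlg{\gl_2}_e = (U_2)_e \oplus ((U_2)_e \cdot \mathcal{V}_2)$, using that $\mathcal{V}_2 \in Le$ commutes with everything and that the $\mathbb{S}_2$-grading (via the $(12)_2$-action from the proof of Lemma \ref{lem: Case 2 two part lemma}) is compatible with the support-$e$ condition. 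Since $U_2$ is a Galois order, $(U_2)_e$ is an integral extension of $\Gamma = \C[e_{ki}]$ by Theorem \ref{thm: Galois Order condition}; and $\mathcal{V}_2$ is integral over $\widetilde{\Gamma}$ because it satisfies $\mathcal{V}_2^2 = (x_{21}-x_{22})^2 \in \Gamma \subseteq \widetilde{\Gamma}$. Integrality of $(U_2)_e$ over $\Gamma$ plus $\widetilde{\Gamma} = \Gamma[\mathcal{V}_2]$ being a finite $\Gamma$-module (rank $2$) then gives that all of $\Anlg{\gl_2}_e$ is integral over $\widetilde{\Gamma}$, by transitivity of integrality and the fact that a module-finite ring extension is integral.

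Next I would verify the Harish-Chandra condition: for every $u \in \Anlg{\gl_2}$, the bimodule $\widetilde{\Gamma} u \widetilde{\Gamma}$ is finitely generated as a left and as a right $\widetilde{\Gamma}$-module. Writing $u = u_0 + u_1 \mathcal{V}_2$ with $u_i \in U_2$ and using that $\mathcal{V}_2$ is central, it suffices to treat $u \in U_2$, where $\widetilde{\Gamma} u \widetilde{\Gamma} = \Gamma[\mathcal{V}_2]\, u\, \Gamma[\mathcal{V}_2] = \mathcal{V}_2 \cdot (\Gamma u \Gamma) \oplus (\Gamma u \Gamma)$ as $\Gamma$-bimodules (again via the grading), reducing to the statement that $\Gamma$ is a Harish-Chandra subalgebra of $U_2$ — which holds since $U_2$ is a Galois $\Gamma$-order (Proposition \ref{prop: Gamma maxl comm in a Galois Order} and the definitions, or directly because $U(\gl_n)$ is the classical Harish-Chandra pair). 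Finiteness over $\widetilde{\Gamma}$ then follows from finiteness over $\Gamma$ since $\widetilde{\Gamma}$ is $\Gamma$-module-finite. With both hypotheses of Theorem \ref{thm: Galois Order condition}(2) confirmed, $\Anlg{\gl_2}$ is a Galois $\widetilde{\Gamma}$-order.

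The main obstacle I anticipate is bookkeeping around the interaction of the three structures at once: the support-over-$\mathscr{M}$ filtration (to identify $\Anlg{\gl_2}_e$), the $U_2$-module decomposition from Lemma \ref{lem: Case 2 two part lemma}, and the $\mathbb{S}_2$-action used to split bimodules into $\mathcal{V}_2$-homogeneous pieces. In particular one must be careful that ``$\cap Le$'' genuinely distributes over the direct sum $U_2 \oplus U_2\mathcal{V}_2$, i.e. that an element of $\Anlg{\gl_2}$ lies in $Le$ iff both its $U_2$-component and its $U_2\mathcal{V}_2$-component do; this is where the $(12)_2$-eigenspace decomposition is essential, since it shows the two summands are ``disjoint'' in a way that is preserved by restricting the support. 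Everything else is a formal consequence of $U_2$ being a Galois $\Gamma$-order together with $\widetilde{\Gamma}$ being a module-finite (degree-$2$) extension of $\Gamma$ inside $\widetilde{K}$.
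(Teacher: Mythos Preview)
Your proposal is correct and follows essentially the same route as the paper: both invoke Theorem~\ref{thm: Galois Order condition}(2), reduce the Harish-Chandra property of $\widetilde{\Gamma}\subset\Anlg{\gl_2}$ to that of $\Gamma\subset U_2$ via Lemma~\ref{lem: Case 2 two part lemma} and the centrality of $\mathcal{V}_2$, and then handle the integrality condition on $\Anlg{\gl_2}_e$. The only difference is cosmetic: the paper shortcuts this last step by observing that $\widetilde{\Gamma}$ is maximal commutative in $\Anlg{\gl_2}$ (since $\Gamma$ is maximal commutative in $U_2$ and $\mathcal{V}_2$ is central), which forces $\Anlg{\gl_2}_e=\widetilde{\Gamma}$ outright, whereas you work out the integral extension explicitly via the direct-sum decomposition.
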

\begin{proof}
We first observe that $\Anlg{\gl_2}$ is a Galois $\widetilde{\Gamma}$-ring by
Proposition \ref{prop: basic properties of A(gln)} \ref{item: A(gln) is a Galois ring}.
To prove that $\Anlg{\gl_2}$ is a Galois $\widetilde{\Gamma}$-order,
we will use Theorem \ref{thm: Galois Order condition}. Since $\Gamma$ is a
Harish-Chandra subalgebra of $U(\gl_2)$, $\widetilde{\Gamma}$
is a Harish-Chandra subalgebra of $\Anlg{\gl_2}$. Since $\mathbb{A}_2$ is a group,
all we need to show is that $\widetilde{\Gamma}$ is maximal commutative
in $\Anlg{\gl_2}$. This is clear because $\Gamma$ is maximal commutative in $U_2$,
and $\widetilde{\Gamma}$ is just an extension by a central
element by Proposition \ref{prop: n=2 isomorphism}. $\widetilde{\Gamma}$
is maximal commutative in $\Anlg{\gl_2}$; therefore, $\Anlg{\gl_2}$ is a
Galois $\widetilde{\Gamma}$-order. 
\end{proof}

The following shows that $\Anlg{\gl_2}$ is a generalized Weyl algebra \cite{BavulaGWA},
which gives another way to show it is a Galois order \cite{FO10}. First we recall
the definition of a generalized Weyl algebra.

\begin{definition}[\cite{BavulaGWA}]
Let $D$ be a ring, $\sigma$ a ring automorphism of $D$, and $t$ a central element of
$D$. The \emph{generalized Weyl algebra} of rank 1, $D(\sigma,t)$
is a ring generated by the ring $D$ and two elements $x$ and $y$ subject to the
following relations:
\begin{equation}\label{eq: GWA defining relations 1}
xd=\sigma(d)x\text{ and }yd=\sigma^{-1}(d)y\text{ for all }d\in D;
\end{equation}
\begin{equation}\label{eq: GWA defining relations 2}
yx=t \text{ and } xy=\sigma(t).
\end{equation}
\end{definition}

\begin{proposition}\label{prop: A(gl2) is a GWA}
$\Anlg{\gl_2}$ is isomorphic to the generalized Weyl algebra 
$\widetilde{\Gamma}(\sigma,t)$, where $\sigma=\delta^{11}$,\\
$t=-e_{22}+e_{11}e_{21}-e_{11}^2$, and $\widetilde{\Gamma}$ is defined in
(\ref{eq: def of Gamma Tilde}).
\end{proposition}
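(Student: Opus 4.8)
The plan is to exhibit an explicit isomorphism from the generalized Weyl algebra $\widetilde{\Gamma}(\sigma,t)$ to $\Anlg{\gl_2}$ by matching generators, and then to verify the defining relations of the GWA hold among those images. First I would recall from Definition \ref{def: A(gln)} that $\Anlg{\gl_2}$ is generated inside $\mathscr{L}$ by $X_1^\pm=(\delta^{11})^{\pm1}a_{11}^\pm$, by $X_{11}=x_{11}$ and $X_{22}=x_{21}+x_{22}-x_{11}+1$, and by $\mathcal{V}_2=x_{11}\cdot 0?$—more precisely $\mathcal{V}_1$ is trivial and $\mathcal{V}_2=x_{21}-x_{22}$; note that $\mathcal{V}_2$ is a polynomial in the $x_{2j}$ only, hence lies in $\widetilde{\Gamma}$ after passing to $\mathbb{A}_2$-invariants. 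Since $a_{11}^\pm=\mp\frac{(x_{2,1}-x_{11})(x_{2,2}-x_{11})}{1}$ (the empty product in the denominator of $a_{11}^\pm$ being $1$), the elements $X_1^\pm$ already have coefficients in $\widetilde{\Gamma}\cdot\delta^{\pm1}$ up to the $\mathbb{A}_2$-averaging. I would set $x\mapsto X_1^+$, $y\mapsto X_1^-$, identify $D=\widetilde{\Gamma}$ as the degree-zero part (support $\{e\}$), and $\sigma=\delta^{11}$ acting on $\widetilde{\Gamma}$.

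The key computational steps are then: (1) check that $\widetilde{\Gamma}$ is $\delta^{11}$-stable, which holds because $\delta^{11}$ shifts only $x_{11}$, so it preserves $\C[e_{ki},\mathcal{V}_\ell]$ (indeed $\delta^{11}$ fixes every $e_{2i}$, $e_{11}=x_{11}$, and $\mathcal{V}_2$, so in fact $\sigma$ acts as the identity on the subring generated by the $x_{2j}$ and sends $e_{11}\mapsto e_{11}-1$); (2) verify the commutation relations $X_1^+ d=\sigma(d)X_1^+$ and $X_1^- d=\sigma^{-1}(d)X_1^-$ for $d\in\widetilde{\Gamma}$—this is immediate from the skew-monoid-ring multiplication $a\mu\cdot b\mu'=(a\,\mu(b))(\mu\mu')$ in $\mathscr{L}$ together with the fact that the coefficient of $\delta^{\pm1}$ in $X_1^\pm$ is itself $\delta^{11}$-invariant (being a polynomial in $x_{11}$—wait, it is not, so one uses that the whole relation only needs the conjugation identity); (3) compute $yx=X_1^-X_1^+$ and $xy=X_1^+X_1^-$ directly and check $yx=t$, $xy=\sigma(t)$ for $t=-e_{22}+e_{11}e_{21}-e_{11}^2$. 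For step (3), in $\mathscr{L}$ one has $X_1^-X_1^+=(\delta^{11})^{-1}a_{11}^-\cdot\delta^{11}a_{11}^+=\delta^{11}{}^{-1}(a_{11}^-)\cdot$ (applied appropriately) $\cdot a_{11}^+$, a purely rational (in fact polynomial) element of $Le$, and one simplifies to get a polynomial in $e_{11}=x_{11},e_{21}=x_{21}+x_{22},e_{22}=x_{21}x_{22}$; similarly $X_1^+X_1^-$ gives $\sigma$ of the same expression. I would cross-check this against Proposition \ref{prop: n=2 isomorphism}: since $\Anlg{\gl_2}=U_2\oplus U_2\mathcal{V}_2$ and the relations already determine a surjection, counting ranks (the GWA $\widetilde\Gamma(\sigma,t)$ is free over $\widetilde\Gamma$ on $\{x^m,y^m\}$, matching the known PBW-type basis of $U_2$ over $\Gamma$ extended by $\mathcal{V}_2$) forces injectivity.

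The main obstacle I expect is step (3): carrying out the rational-function computation of $X_1^-X_1^+$ and showing it collapses to exactly $-e_{22}+e_{11}e_{21}-e_{11}^2$, and then checking that $xy$ is precisely $\sigma$ applied to that same $t$ rather than some other shift. This is the only place where the specific normalization of the $a_{ki}^\pm$ and the $+j-1$ shifts in $X_{kk}$ actually enter, so it must be done carefully; the cleanest route is to note that $X_1^\pm=\varphi(E_1^\pm)$ and reuse the $\gl_2$ relations, e.g. $E_1^-E_1^+$ is a known element of $Z$-plus-Cartan, so that $\varphi(E_1^-E_1^+)$ can be read off from $\varphi(c_{22})$ and $\varphi(E_{11}),\varphi(E_{22})$ already computed in the proof of Proposition \ref{prop: n=2 isomorphism}. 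Everything else (stability of $\widetilde\Gamma$, the conjugation relations, freeness/rank count) is routine given the earlier results.
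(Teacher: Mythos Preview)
Your overall plan matches the paper's proof: define $\psi\colon\widetilde{\Gamma}(\sigma,t)\to\Anlg{\gl_2}$ by $x\mapsto X_1^+$, $y\mapsto X_1^-$, $\gamma\mapsto\gamma$, verify the GWA relations (\ref{eq: GWA defining relations 1})--(\ref{eq: GWA defining relations 2}), and note surjectivity. One small slip: you write $a_{11}^\pm=\mp(x_{21}-x_{11})(x_{22}-x_{11})$, but in fact $a_{11}^-=1$ since the numerator $\prod_{j=1}^{0}(x_{0,j}-x_{11})$ is an empty product; only $a_{11}^+$ has the quadratic factor. This makes the direct check of $yx=t$ immediate: $X_1^-X_1^+=(\delta^{11})^{-1}\cdot\delta^{11}a_{11}^+=a_{11}^+=-(x_{21}-x_{11})(x_{22}-x_{11})=-e_{22}+e_{11}e_{21}-e_{11}^2$, and similarly $X_1^+X_1^-=\delta^{11}(a_{11}^+)=\sigma(t)$.

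For injectivity the paper takes a cleaner route than your rank-count sketch. It equips both algebras with the $\Z$-gradation $\deg x=\deg X_1^+=1$, $\deg y=\deg X_1^-=-1$, $\deg\widetilde\Gamma=0$, observes that $\psi$ is graded so $\ker\psi$ is a graded ideal, and then invokes the general fact that in a GWA $D(\sigma,t)$ with $D$ a domain, the only graded ideal $I$ with $D\cap I=0$ is $I=0$; since $\psi|_{\widetilde\Gamma}=\mathrm{id}$, this finishes. Your freeness/rank argument is not wrong---the images $(X_1^\pm)^m$ have pairwise distinct singleton support in $\mathscr{M}$, hence are $L$-independent in $\mathscr{L}$, which forces injectivity on each graded piece---but as written it leans on a ``PBW-type basis of $U_2$ over $\Gamma$'' that has not been established in the paper, whereas the graded-ideal argument avoids any such appeal.
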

\begin{proof}
Recall that $\Anlg{\gl_2}$ is the subalgebra generated by $\widetilde{\Gamma},X_1^\pm$ (see Definition \ref{def: A(gln)}).
We define $\psi\colon\widetilde{\Gamma}(\sigma,t)\rightarrow\Anlg{\gl_2}$ by
\[
x\mapsto X_1^+,~ y\mapsto X_1^-\text{, and }\gamma\mapsto\gamma\text{ for all }
\gamma\in\widetilde{\Gamma}.
\]
One can verify that the defining relations (\ref{eq: GWA defining relations 1})
and (\ref{eq: GWA defining relations 2}) are preserved by $\psi$, making it well-defined.
Clearly $\psi$ is surjective. For injectivity, we note $\psi$ is graded, when
$\widetilde{\Gamma}(\sigma,t)$ and $\Anlg{\gl_2}$ are equipped with the $\Z$-gradations determined
by
\begin{equation}
\deg X_1^\pm=\pm1,~\deg\gamma=0~\forall\gamma\in\widetilde{\Gamma},~\deg x=1,~\deg y=-1.
\end{equation}
As such, $\ker\psi$ is a graded ideal. But $\widetilde{\Gamma}\cap(\ker\psi)=0$. Since the only
graded ideal $I$ in a generalized Weyl algebra $D(\sigma,t)$, where $D$ is a domain, with
$D\cap I=0$ is $I=0$, we get $\ker\psi=0$.
\end{proof}

We observe the following interesting property of $\Anlg{\gl_2}$ that we prove
does not hold for general $n$ (see Proposition \ref{prop: Sn invariants larger than Ugln}).

\begin{proposition}\label{prop: S2 invariants of Agl2 is U2}
$\Anlg{\gl_2}$ has the property that $(\Anlg{\gl_2})^{\mathbb{S}_2}=U_2$.
\end{proposition}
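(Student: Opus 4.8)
The plan is to combine the direct-sum decomposition $\Anlg{\gl_2}=U_2\oplus(U_2\cdot\mathcal{V}_2)$ from Lemma~\ref{lem: Case 2 two part lemma}\ref{item: direct sum decomp} with the eigenspace computation for the $(12)_2$-action that was already carried out inside the proof of that lemma. First I would observe that $\mathbb{S}_2=S_1\times S_2$ has trivial first factor, so $\mathbb{S}_2$ is generated by the single involution $(12)_2=((1),(12))$; hence taking $\mathbb{S}_2$-invariants is the same as taking the $+1$-eigenspace of $(12)_2$ acting on $\Anlg{\gl_2}$ by the restriction of the automorphism of $\mathscr{L}$.

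For the inclusion $U_2\subseteq(\Anlg{\gl_2})^{\mathbb{S}_2}$, recall that $U_2=\varphi(U(\gl_2))\subseteq\mathscr{K}=(L\#\mathscr{M})^{\mathbb{S}_2}$ by the very construction of $\varphi$, so every element of $U_2$ is fixed pointwise by $(12)_2$ (this is also recorded as $(12)_2|_{U_2}=\Id|_{U_2}$ in the proof of Lemma~\ref{lem: Case 2 two part lemma}). For the reverse inclusion, take $w\in(\Anlg{\gl_2})^{\mathbb{S}_2}$ and write $w=u+v\mathcal{V}_2$ with $u,v\in U_2$ uniquely, using Lemma~\ref{lem: Case 2 two part lemma}\ref{item: direct sum decomp}. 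Applying $(12)_2$, which fixes $u$ and $v$ and negates $\mathcal{V}_2$, gives $(12)_2(w)=u-v\mathcal{V}_2$; invariance of $w$ then forces $2v\mathcal{V}_2=0$, hence $v\mathcal{V}_2=0$. Since $\Anlg{\gl_2}\subseteq\mathscr{L}=L\#\mathscr{M}$ is a domain (for $n=2$ this is a skew Laurent polynomial ring over $L$) and $\mathcal{V}_2\neq0$—equivalently, since $\Anlg{\gl_2}$ is free of rank $2$ over $U_2$ with basis $\{1,\mathcal{V}_2\}$ by Proposition~\ref{prop: n=2 isomorphism}—we conclude $v=0$, so $w=u\in U_2$.

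There is essentially no obstacle here: the statement is really just a repackaging of the $(12)_2$-eigenvalue data from Lemma~\ref{lem: Case 2 two part lemma}. The only two points that must be stated carefully are that the full group $\mathbb{S}_2$ coincides with $\langle(12)_2\rangle$ (so that invariance reduces to the single eigenvalue condition), and the passage from $v\mathcal{V}_2=0$ to $v=0$, for which one cites either that $\mathscr{L}$ is a domain or the freeness of $\Anlg{\gl_2}$ over $U_2$ established in Proposition~\ref{prop: n=2 isomorphism}.
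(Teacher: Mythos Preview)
Your proof is correct and follows essentially the same route as the paper: both use the decomposition $\Anlg{\gl_2}=U_2\oplus U_2\mathcal{V}_2$ from Lemma~\ref{lem: Case 2 two part lemma}\ref{item: direct sum decomp} together with the fact that $(12)_2$ fixes $U_2$ and negates $\mathcal{V}_2$, then read off that an invariant element must have vanishing $\mathcal{V}_2$-component. Your write-up is slightly more careful in isolating why $\mathbb{S}_2$-invariance reduces to a single involution and why $v\mathcal{V}_2=0$ forces $v=0$, but the argument is the same.
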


\begin{proof}
This becomes clear when we consider the direct sum decomposition shown in
Lemma \ref{lem: Case 2 two part lemma}~\ref{item: direct sum decomp}.
Consider $a+b\mathcal{V}_2\in\Anlg{\gl_2}$:
\begin{align*}
a+b\mathcal{V}_2\in(\Anlg{\gl_2})^{\mathbb{S}_2}&\iff(12)_2(a+b\mathcal{V}_2)=a+b\mathcal{V}_2\\
&\iff a-b\mathcal{V}_2=a+b\mathcal{V}_2\\
&\iff b=0\\
&\iff a+b\mathcal{V}_2=a\in U_2.\\
\end{align*}
Therefore, $(\Anlg{\gl_2})^{\mathbb{S}_2}=U_2$.
\end{proof}

\section{The Structure of \texorpdfstring{$\Anlg{\gl_3}$}{A(gl3)}}\label{sec: n=3}
Based on the result of the previous section, the next logical step is to see if similar
results hold for $\gl_n$ with $n\geq3$. We will continue using the notation of
the images of the generators of the $U(\gl_n)$ as before. As such:
\[
X_i^\pm:=\varphi(E_i^\pm)\quad\text{and}\quad X_{ii}:=\varphi(E_{ii}).
\]
\subsection{Non-polynomial rational functions in \texorpdfstring{$\Anlg{\gl_3}$}{A(gl3)}} 
Unlike in $U(\gl_3)$ and $\Anlg{\gl_2}$, we can construct non-polynomial rational
functions in $\Anlg{\gl_3}$. It follows that for $n\geq3$, $\Anlg{\gl_n}$ is not a Galois
$\widetilde{\Gamma}$-order, and the invariant property of $\Anlg{\gl_2}$ does not hold.

\begin{lemma}
The following identity holds in $\Anlg{\gl_3}$:
\begin{equation}\label{eq: V2 commutator identity}
\pm[X_{2}^\pm,\mathcal{V}_2]=(\delta^{21})^{\pm1}a_{21}^{\pm}-(\delta^{22})^{\pm1}a_{22}^{\pm}.
\end{equation}
\end{lemma}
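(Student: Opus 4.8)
The plan is to compute the commutator $[X_2^\pm, \mathcal{V}_2]$ directly using the explicit form of $X_2^\pm = \sum_{i=1}^2 (\delta^{2i})^{\pm1} a_{2i}^\pm$ and the fact that $\mathcal{V}_2 = x_{21} - x_{22}$. The key observation is that in the skew monoid ring $\mathscr{L} = L\#\mathscr{M}$, an element of the form $\delta^{2i} a$ acts on a function $f \in L$ by $(\delta^{2i} a) \cdot f = \delta^{2i}(a f)$; more to the point, the commutator of $\delta^{2i} a$ with a polynomial $f$ is governed by how far $\delta^{2i}$ moves $f$. Concretely, writing elements with $L$ on the right (as the paper does), we have $\mathcal{V}_2 \cdot (\delta^{2i})^{\pm1} a_{2i}^\pm = (\delta^{2i})^{\pm1} (\delta^{2i})^{\mp1}(\mathcal{V}_2) a_{2i}^\pm$, so
\[
[X_2^\pm, \mathcal{V}_2] = \sum_{i=1}^2 (\delta^{2i})^{\pm1}\bigl(\mathcal{V}_2 - (\delta^{2i})^{\mp1}(\mathcal{V}_2)\bigr) a_{2i}^\pm .
\]
So the first step is to record this general commutator formula and then evaluate $\mathcal{V}_2 - (\delta^{2i})^{\mp1}(\mathcal{V}_2)$ for $i = 1, 2$.

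Second, I would carry out that evaluation. Since $\delta^{2i}(x_{\ell j}) = x_{\ell j} - \delta_{\ell 2}\delta_{ij}$, we have $(\delta^{21})^{\mp1}(\mathcal{V}_2) = (x_{21} \pm 1) - x_{22} = \mathcal{V}_2 \pm 1$ and $(\delta^{22})^{\mp1}(\mathcal{V}_2) = x_{21} - (x_{22} \pm 1) = \mathcal{V}_2 \mp 1$. Hence $\mathcal{V}_2 - (\delta^{21})^{\mp1}(\mathcal{V}_2) = \mp 1$ and $\mathcal{V}_2 - (\delta^{22})^{\mp1}(\mathcal{V}_2) = \pm 1$. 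Substituting back,
\[
[X_2^\pm, \mathcal{V}_2] = (\delta^{21})^{\pm1}(\mp 1) a_{21}^\pm + (\delta^{22})^{\pm1}(\pm 1) a_{22}^\pm = \mp\bigl((\delta^{21})^{\pm1} a_{21}^\pm - (\delta^{22})^{\pm1} a_{22}^\pm\bigr),
\]
which upon multiplying both sides by $\pm 1$ gives exactly the claimed identity. (One should double-check the sign bookkeeping in the $\pm/\mp$ conventions, but the structure is forced.)

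The only genuine subtlety — and the step I expect to need the most care — is making sure the commutator formula $[\delta^{2i} a, f] = \delta^{2i}(\mathcal{V}_2 - (\delta^{2i})^{-1}(f))\,a$ is written with the correct side conventions, since the paper alternates between writing scalars on the left of monoid elements (in the definition of $\mathscr{L}$) and on the right (in the evaluation/co-evaluation definitions and in Definition~\ref{def: A(gln)}). Once the convention is pinned down, everything reduces to the one-line substitution above; there is no analytic content. Note that the identity confirms the remark preceding it: the right-hand side $(\delta^{21})^{\pm1} a_{21}^\pm - (\delta^{22})^{\pm1} a_{22}^\pm$ is a genuinely non-polynomial element of $\widetilde{\mathscr{K}}$ (the $a_{2i}^\pm$ have the factor $\prod_{j\neq i}(x_{2j}-x_{2i})$ in the denominator), so it lies in $\Anlg{\gl_3}$ but not in any polynomial part — which is why $\Anlg{\gl_n}$ fails to be a Galois order for $n \geq 3$.
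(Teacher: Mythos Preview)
Your proposal is correct and takes essentially the same approach as the paper: both compute the commutator directly in the skew monoid ring by moving $\mathcal{V}_2$ past $(\delta^{2i})^{\pm1}$ and tracking the shift. The paper computes $\mathcal{V}_2 X_2^\pm$ term by term and rearranges, while you first isolate the general identity $[(\delta^{2i})^{\pm1}a,\mathcal{V}_2]=(\delta^{2i})^{\pm1}\bigl(\mathcal{V}_2-(\delta^{2i})^{\mp1}(\mathcal{V}_2)\bigr)a$ and then substitute; these are the same computation in different packaging, and your caution about the $\pm/\mp$ bookkeeping is well placed (indeed, the paper's own final line has the same delicate sign step).
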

\begin{proof}
To show this, consider $\mathcal{V}_2X_2^\pm$:
\begin{align*}
\mathcal{V}_2X_2^\pm&=(x_{21}-x_{22})((\delta^{21})^{\pm1} a_{21}^\pm+(\delta^{22})^{\pm1} a_{22}^\pm)\\
&=(\delta^{21})^{\pm1} a_{21}^\pm(x_{21}\pm1-x_{22})+(\delta^{22})^{\pm1} a_{22}^\pm(x_{21}-x_{22}\mp1)\\
&=X_2^\pm\mathcal{V}_2\pm((\delta^{21})^{\pm1} a_{21}^\pm-(\delta^{22})^{\pm1} a_{22}^\pm).
\end{align*}
Therefore, $\pm[X_2^\pm,\mathcal{V}_2]=(\delta^{21})^{\pm1} a_{21}^\pm-(\delta^{22})^{\pm1} a_{22}^\pm$.
\end{proof}

Let us denote the element described in (\ref{eq: V2 commutator identity})
by $\widetilde{X}_2^\pm$. We define the following:
\begin{center}
\begin{tabular}{l l}
$A_{21}^+:=\frac{1}{2}(X_2^+ +\widetilde{X}_2^+)=\delta^{21}a_{21}^+$
& $A_{21}^{-}:=\frac{1}{2}(X_2^-+\widetilde{X}_2^-)=(\delta^{21})^{-1}a_{21}^-$\\
$A_{22}^{+}:=\frac{1}{2}(X_2^+-\widetilde{X}_2^+)=\delta^{22}a_{22}^+$
& $A_{22}^{-}:=\frac{1}{2}(X_2^--\widetilde{X}_2^-)=(\delta^{22})^{-1}a_{22}^-$
\end{tabular}
\end{center}
By their definition, it is clear that they are in $\Anlg{\gl_3}$.

The following example shows that if $n\geq3$, then $\widetilde{\Gamma}$
is not maximal commutative; hence, $\Anlg{\gl_n}$ is not a Galois $\widetilde{\Gamma}$-order
by Proposition \ref{prop: Gamma maxl comm in a Galois Order}.

\begin{example}\label{ex: Gammatilde is not maximal comm}
The following element belongs to $\Anlg{\gl_n}$ for $n\geq3$:
\[
A_{21}^+A_{21}^-=-\frac{\prod_{i=1}^3(x_{3i}-x_{21}+1)}{(x_{22}-x_{21}+1)}\cdot\frac{x_{11}-x_{21}}{x_{22}-x_{21}}.
\]
This is a rational function; hence, it lies in ${\rm Cent}_{\Anlg{\gl_3}}(\widetilde{\Gamma})$.
\end{example}

The following rather surprising fact shows that
the property in Proposition \ref{prop: S2 invariants of Agl2 is U2} does not hold for
larger $n$.

\begin{proposition}\label{prop: Sn invariants larger than Ugln}
For $n\geq3$, $\Anlg{\gl_n}^{\mathbb{S}_n}\supsetneq U_n$.
\end{proposition}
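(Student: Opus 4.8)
The plan is to exhibit an explicit element of $\Anlg{\gl_n}^{\mathbb{S}_n}$ that does not lie in $U_n$. The natural candidate is built from the elements $A_{21}^\pm, A_{22}^\pm \in \Anlg{\gl_3} \subseteq \Anlg{\gl_n}$ introduced just above. Note that $A_{2i}^\pm = (\delta^{2i})^{\pm 1} a_{2i}^\pm$ is a single-term element supported on the monoid generator $(\delta^{2i})^{\pm 1}$, with coefficient the rational function $a_{2i}^\pm$ from \eqref{eq: defintion of the rational functions a}. First I would consider the element
\[
W := A_{21}^+ A_{21}^- + A_{22}^+ A_{22}^- \in \Anlg{\gl_n},
\]
which is a sum of two elements of $Le$ (since $\delta^{21}(\delta^{21})^{-1} = e$, etc.), hence lies in $\widetilde{\mathscr{K}} \cap Le$, i.e. it is a genuine rational function in $L$. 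Explicitly, as in Example \ref{ex: Gammatilde is not maximal comm}, one computes $A_{21}^+ A_{21}^- = \delta^{21}(a_{21}^+) \cdot \delta^{21}(\cdots)$... more precisely $A_{2i}^+A_{2i}^- = \delta^{2i}(a_{2i}^+ \cdot (\delta^{2i})^{-1}(a_{2i}^-))$ evaluated so that the monoid parts cancel; the point is that $W$ is the $S_2$-symmetrization (summing over the two indices $i=1,2$ of the pair $(x_{21},x_{22})$) of a rational function, and this is a standard manipulation.

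Next I would check that $W$ is fixed by $\mathbb{S}_n = S_1 \times \cdots \times S_n$. Since $W$ already lives in $\widetilde{\mathscr{K}} = (L\#\mathscr{M})^{\mathbb{A}_n}$, it is automatically invariant under $A_1 \times \cdots \times A_n$; the only extra generators of $\mathbb{S}_n$ are the transpositions $(12)_\ell := ((1),\ldots,(12),\ldots,(1)) \in \mathbb{S}_n$ acting on the $\ell$-th block of variables $x_{\ell 1},\ldots,x_{\ell\ell}$. For $\ell \neq 2$ these fix all variables appearing in $W$ (one must verify $W$ only involves $x_{11}, x_{21}, x_{22}$ and $x_{3i}$ — but the $x_{3i}$ only occur when $n \geq 3$ and, more carefully, one should arrange the candidate so it is visibly independent of variables in blocks $\geq 3$, or else symmetrize over those too; alternatively use instead the cleaner element that only involves blocks $1$ and $2$). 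For $\ell = 2$, $(12)_2$ swaps $x_{21} \leftrightarrow x_{22}$ and correspondingly swaps $\delta^{21} \leftrightarrow \delta^{22}$ and $a_{21}^\pm \leftrightarrow a_{22}^\pm$, hence swaps the two summands $A_{21}^+A_{21}^-$ and $A_{22}^+A_{22}^-$ of $W$, leaving $W$ invariant. So $W \in \Anlg{\gl_n}^{\mathbb{S}_n}$.

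Finally I would show $W \notin U_n$. The decisive observation is that $U_n = \varphi(U(\gl_n))$ is a Galois $\Gamma$-order, so by Proposition \ref{prop: Gamma maxl comm in a Galois Order} applied to $U_n$, the subalgebra $\Gamma = \C[e_{ki}]$ is maximal commutative in $U_n$; equivalently, $U_n \cap Le = \Gamma$, which consists only of \emph{polynomials} in the $x_{ki}$ (symmetric in each block). But $W$, by the computation of Example \ref{ex: Gammatilde is not maximal comm}, is a non-polynomial rational function — it has a genuine pole along $x_{22} - x_{21} = 0$ (or, after symmetrizing, along the appropriate discriminant locus) that is not cancelled. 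Hence $W \notin \Gamma$, and since $W \in \Anlg{\gl_n} \cap Le$ while $U_n \cap Le = \Gamma$, we conclude $W \notin U_n$. Therefore $\Anlg{\gl_n}^{\mathbb{S}_n} \supsetneq U_n$.

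The main obstacle I anticipate is bookkeeping rather than conceptual: one must present the candidate $W$ so that it is manifestly invariant under \emph{all} of $\mathbb{S}_n$ (not just $S_2$ acting on block $2$), which may require symmetrizing over blocks $\geq 3$ as well, or choosing a candidate supported only on blocks $1$ and $2$; and one must verify carefully that the resulting rational function genuinely fails to be a polynomial (i.e. the pole does not cancel after full symmetrization), which is the crux of separating $W$ from $\Gamma \subseteq U_n$.
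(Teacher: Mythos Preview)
Your proposal is correct and follows the same strategy as the paper: exhibit an $\mathbb{S}_n$-invariant element of $\Anlg{\gl_n}\cap K$ built from $A_{21}^+A_{21}^-$ and $A_{22}^+A_{22}^-$, then invoke $U_n\cap K=\Gamma$ (which holds because $U_n$ is a Galois $\Gamma$-order) to conclude it cannot lie in $U_n$. The paper uses the \emph{product} $A_{21}^+A_{21}^-\cdot A_{22}^+A_{22}^-$ rather than your sum; this choice dispatches both of your worries at once. Invariance under $(12)_2$ is immediate since the two factors lie in the commutative ring $L$ and are interchanged; invariance under $S_\ell$ for $\ell\geq 3$ is automatic because the $x_{3i}$ appear only through the symmetric factor $\prod_i(x_{3i}-c)$ and no variables from blocks $\ell\geq 4$ appear at all; and the pole along $x_{21}=x_{22}$ visibly survives in the product of denominators. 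Your sum works too---the simple poles at $x_{21}=x_{22}$ do cancel, but those at $x_{21}-x_{22}=\pm 1$ do not---so the bookkeeping you flagged as an obstacle is in fact routine.
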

\begin{proof}
The fact that $U_n\subset\Anlg{\gl_n}^{\mathbb{S}_n}$ is obvious by definition. To
show the containment is strict, we recall that because $U_n$ is a Galois $\Gamma$-order,
it is known that $U_n\cap K=\Gamma$. Therefore, we consider
$\Anlg{\gl_n}^{\mathbb{S}_n}\cap K$. Since $U_3\subseteq U_n$ for every
$n\geq3$, it suffices to show that $\Anlg{\gl_3}^{\mathbb{S}_3}\cap K\supsetneq\Gamma$.

The object to prove this claim is constructed in the same way as in Example
\ref{ex: Gammatilde is not maximal comm}. It is quickly observed that 
\[
A_{21}^+A_{21}^-A_{22}^+A_{22}^-=\frac{\prod_{i=1}^3(x_{3i}-x_{21}+1)}{(x_{22}-x_{21}+1)}\cdot\frac{x_{11}-x_{21}}{x_{22}-x_{21}}\cdot
\frac{\prod_{i=1}^3(x_{3i}-x_{22}+1)}{(x_{21}-x_{22}+1)}\cdot\frac{x_{11}-x_{22}}{x_{21}-x_{22}}
\]
is invariant under the action of $\mathbb{S}_3$. This element is clearly
not in $\Gamma$, so this element is in $\Anlg{\gl_3}^{\mathbb{S}_3}\cap K\setminus\Gamma$,
thereby proving the claim.
\end{proof}

\subsection{Generators and relations for \texorpdfstring{$n=3$}{n=3}}\label{subsec: partial presentation}

Based on the previous subsection, we determine a set of generators and some verified relations
for $\Anlg{\gl_3}$. However, we do not know if this constitutes a presentation, that is, this
may be an incomplete list.

\begin{proposition}\label{prop: A(gl3) relations}
The algebra $\Anlg{\gl_3}$ is generated by $\{X_{11},X_{22},X_{33},A_{11}^\pm,A_{21}^\pm,A_{22}^\pm,\mathcal{V}_2,\mathcal{V}_3\}$, where $A_{ij}^\pm:=(\delta^{ij})^{\pm1}a_{ij}^\pm$, $\mathcal{V}_2=x_{21}-x_{22}$, and
$\mathcal{V}_3=\prod_{i<j}(x_{3i}-x_{3j})$.
What follows is a list of known relations:
\begin{enumerate}[\rm (i)]
\item $[\mathcal{V}_3,X]=0$ for all $X\in\Anlg{\gl_3}$ (i.e $\mathcal{V}_3$ is central
in $\Anlg{\gl_3}$),\label{rel: V3 is central in A(gl3)}
\item $[X,Y]=0$ for all $X,Y\in\mathfrak{h}=\Span_{\C}\{X_{11},X_{22},X_{33},\mathcal{V}_2,\mathcal{V}_3\}$,\label{rel: cartan like algebra for A(gl3)}
\item $[h,A_{ij}^\pm]=\pm\alpha_{ij}(h)A_{ij}^\pm$ for all $h\in\mathfrak{h}$ and $1\leq j\leq i\leq2$,
where $\alpha_{ij}(h)$ are given by the following matrix:
\[
\begin{blockarray}{cccccc}
 & X_{11} & X_{22} & X_{33} & \mathcal{V}_2 & \mathcal{V}_3\\
\begin{block}{c[ccccc]}
\alpha_{11} & 1 & -1 & 0 & 0 & 0\\
\alpha_{21} & 0 & 1 & -1 & 1 & 0\\
\alpha_{22} & 0 & 1 & -1 & -1 & 0\\
\end{block}
\end{blockarray},
\]\label{rel: chevalley relations in A(gl3)}
\item $[A_{21}^\pm,A_{22}^\mp]=0$,\label{rel: 21 and 22 with opposite powers commute}
\item $[A_{11}^\pm,A_{2i}^\mp]=0$ for $i=1,2$,\label{rel: 11 and 2i with opposite powers commute}
\item $[A_{11}^+,A_{11}^-]=X_{11}-X_{22}$, \label{rel: fourth U(gl3) relation}
\item $[A_{21}^+,A_{21}^-]+[A_{22}^+,A_{22}^-]=X_{22}-X_{33}$, \label{rel: 21+22 is polynomial}
\item $[A_{11}^\pm,[A_{11}^\pm,A_{2i}^\pm]]=0$ for $i=1,2$,\label{rel: serre type relation in A(gl3)}
\item $A_{22}^\pm\mathcal{V}_2 A_{21}^\pm=A_{21}^\pm\mathcal{V}_2A_{22}^\pm$.\label{rel: weird relation in A(gl3)}
\end{enumerate}
\end{proposition}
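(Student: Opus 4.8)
The plan is to verify the listed relations by a mix of direct computation in the skew monoid ring $\mathscr{L}=L\#\mathscr{M}$ and reduction to already-known facts about $U(\gl_3)=U_3$. First I would dispense with the generation claim: by Definition \ref{def: A(gln)}, $\Anlg{\gl_3}$ is generated by $X_1^\pm,X_2^\pm,X_{11},X_{22},X_{33},\mathcal{V}_2,\mathcal{V}_3$; since $X_1^\pm=A_{11}^\pm$ (there is only one summand when $k=1$) and, by the displayed splitting $A_{21}^\pm=\tfrac12(X_2^\pm+\widetilde X_2^\pm)$, $A_{22}^\pm=\tfrac12(X_2^\pm-\widetilde X_2^\pm)$ together with the commutator identity \eqref{eq: V2 commutator identity}, the elements $A_{2i}^\pm$ lie in $\Anlg{\gl_3}$ and conversely recover $X_2^\pm=A_{21}^\pm+A_{22}^\pm$. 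Hence the two generating sets coincide.

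Next I would treat the "Cartan-type" relations \ref{rel: V3 is central in A(gl3)}--\ref{rel: chevalley relations in A(gl3)}. Centrality of $\mathcal{V}_3$ is Proposition \ref{prop: basic properties of A(gl3)}\ref{item: Vn is central in A(gln)} with $n=3$. Mutual commutativity of $\mathfrak{h}$ follows because the $X_{ii}$ are elements of $\Lambda e$ (polynomials, no shift), $\mathcal{V}_2,\mathcal{V}_3\in\Lambda e$ as well, and $L e$ is commutative. For \ref{rel: chevalley relations in A(gl3)}, each $A_{ij}^\pm=(\delta^{ij})^{\pm1}a_{ij}^\pm$, so conjugating a polynomial $h\in\Lambda e$ past $A_{ij}^\pm$ replaces $h$ by $(\delta^{ij})^{\pm1}(h)$; since $\delta^{ij}$ shifts only $x_{ij}$ by $1$, one reads off $h-(\delta^{ij})^{\pm1}(h)=\pm\bigl(h-(\delta^{ij})(h)\bigr)$ (after sign bookkeeping) and checks the tabulated values directly: $X_{ii}$ involves $x_{ij}$ with the indicated signs, $\mathcal{V}_2=x_{21}-x_{22}$ shifts by $+1$ under $\delta^{21}$ and $-1$ under $\delta^{22}$, and $X_{33},\mathcal{V}_3$ pick up the $-1$ from the $x_{2i}$-terms hidden in $X_{33}$ while $\mathcal{V}_3$ is untouched. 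This is bookkeeping, not difficulty.

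The relations \ref{rel: 21 and 22 with opposite powers commute}--\ref{rel: weird relation in A(gl3)} I would split into two kinds. Those that only involve $A_{11}^\pm$ and the combined element $X_2^\pm=A_{21}^\pm+A_{22}^\pm$, namely \ref{rel: fourth U(gl3) relation} and the Serre-type relation \ref{rel: serre type relation in A(gl3)} (which reads $[X_1^\pm,[X_1^\pm,X_2^\pm]]=0$ after summing $i=1,2$), are pulled back verbatim from the Serre presentation of $U(\gl_3)$ via the isomorphism $\varphi$; likewise \ref{rel: 21+22 is polynomial}, since $[A_{21}^+,A_{21}^-]+[A_{22}^+,A_{22}^-]$ is exactly the "diagonal" part of $[X_2^+,X_2^-]$, which in $U(\gl_3)$ equals $E_{22}-E_{33}\mapsto X_{22}-X_{33}$ provided the cross terms $[A_{21}^\pm,A_{22}^\mp]$ vanish — which is relation \ref{rel: 21 and 22 with opposite powers commute}. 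The genuinely new relations are \ref{rel: 21 and 22 with opposite powers commute}, \ref{rel: 11 and 2i with opposite powers commute}, and \ref{rel: weird relation in A(gl3)}, and these I would verify by explicit multiplication in $\mathscr{L}$: write $A_{2i}^\pm=(\delta^{2i})^{\pm1}a_{2i}^\pm$ with $a_{2i}^\pm$ the rational function from \eqref{eq: defintion of the rational functions a}, compute each product by the rule $a_1\mu_1\cdot a_2\mu_2=a_1\mu_1(a_2)\,\mu_1\mu_2$, and observe that $\delta^{21}$ and $\delta^{22}$ commute and that when the monomial parts cancel (opposite powers) the surviving scalar coefficients are symmetric in the roles of the two terms. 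For \ref{rel: weird relation in A(gl3)} one computes $A_{22}^\pm\mathcal{V}_2A_{21}^\pm$: the two $\delta$'s and the linear factor $\mathcal{V}_2=x_{21}-x_{22}$ get shuffled, and the point is that the resulting coefficient is invariant under swapping the indices $1\leftrightarrow2$ in the second level — equivalently it comes from an $A_2$-symmetric function — so it equals $A_{21}^\pm\mathcal{V}_2A_{22}^\pm$.

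The main obstacle I anticipate is \ref{rel: weird relation in A(gl3)} (and to a lesser extent the careful sign/shift tracking in \ref{rel: chevalley relations in A(gl3)}): the rational functions $a_{2i}^\pm$ have denominators $\prod_{j\ne i}(x_{2j}-x_{2i})=\pm\mathcal{V}_2$, so proving the claimed identities requires clearing denominators and checking a polynomial identity in the $x_{3j}$, $x_{2j}$, $x_{11}$ after applying the shifts — the kind of computation that is routine in principle but error-prone, and where one must be vigilant that the final expression genuinely lies in $\mathscr{L}$ (no spurious poles) rather than merely in $\mathscr{K}$. I would organize this by first recording the shift actions $\delta^{2i}(a_{2j}^\pm)$ as explicit rational functions, then assembling the products, and finally invoking the $A_2$-invariance of the coefficient to conclude equality, rather than expanding both sides symmetrically.
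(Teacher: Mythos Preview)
Your overall strategy matches the paper's: reduce what you can to $U(\gl_3)$ relations and verify the rest by direct computation in $\mathscr{L}$. The treatment of generation and of \ref{rel: V3 is central in A(gl3)}--\ref{rel: chevalley relations in A(gl3)}, \ref{rel: 21 and 22 with opposite powers commute}--\ref{rel: 21+22 is polynomial} is essentially the paper's argument.

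There is, however, a genuine gap in your handling of \ref{rel: serre type relation in A(gl3)}. You claim it ``reads $[X_1^\pm,[X_1^\pm,X_2^\pm]]=0$ after summing $i=1,2$'' and hence follows from the Serre relation in $U(\gl_3)$. But the proposition asserts the vanishing for each $i$ \emph{separately}; knowing that the sum $[A_{11}^\pm,[A_{11}^\pm,A_{21}^\pm]]+[A_{11}^\pm,[A_{11}^\pm,A_{22}^\pm]]=0$ is not, a priori, the same as each summand vanishing. The paper closes this gap by direct computation: it evaluates $[A_{11}^-,A_{22}^-]=-(\delta^{11}\delta^{22})^{-1}\frac{1}{x_{21}-x_{22}}$, observes this contains no $x_{11}$, and concludes it commutes with $A_{11}^-$. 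Your approach can be rescued with one extra sentence: the two double commutators are supported on the distinct monoid elements $(\delta^{11})^{\pm2}(\delta^{21})^{\pm1}$ and $(\delta^{11})^{\pm2}(\delta^{22})^{\pm1}$ of $\mathscr{M}$, so vanishing of their sum in $\mathscr{L}$ forces each to vanish. Either fix is short, but you do need one of them.

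A smaller point: your symmetry heuristic for \ref{rel: weird relation in A(gl3)} has the sign backwards. Applying the swap $(12)_2$ to $A_{22}^\pm\mathcal{V}_2A_{21}^\pm$ sends it to $A_{21}^\pm(-\mathcal{V}_2)A_{22}^\pm=-A_{21}^\pm\mathcal{V}_2A_{22}^\pm$, so ``invariance of the coefficient under the swap'' would yield the wrong conclusion. What actually happens (and what the paper's direct computation shows) is that the common coefficient on $(\delta^{21}\delta^{22})^{\pm1}$ is $\frac{\prod_j(x_{2\pm1,j}-x_{21})(x_{2\pm1,j}-x_{22})}{x_{21}-x_{22}}$, which is \emph{anti}-symmetric under the swap; combined with the sign flip from $\mathcal{V}_2$, this gives equality. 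Since you already plan to compute explicitly, this does not derail your proof, but the heuristic as stated is misleading.
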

\begin{proof}
Any of the relations involving only elements from $U(\gl_3)$ (such as
\ref{rel: fourth U(gl3) relation}) follow from $U(\gl_3)$ relations by
recalling that $\{X_{11},X_{22},X_{33},A_{11}^+,A_{11}^-\}\in\Anlg{\gl_3}$
correspond to $\{E_{11},E_{22},E_{33},E_{12},E_{21}\}\in U(\gl_3)$. All that
remains is to prove the relations involving new elements.\\
\ref{rel: V3 is central in A(gl3)} This follows from Proposition \ref{prop: basic properties of A(gln)}
\ref{item: Vn is central in A(gln)}.\\
\ref{rel: cartan like algebra for A(gl3)} This follows by observing that each is an element of
$\widetilde{\Gamma}$ which is a commutative ring.\\
\ref{rel: chevalley relations in A(gl3)} By the statement at the beginning
of this proof and \ref{rel: V3 is central in A(gl3)}, we only need to check
the second two rows and the second to last column. Each is proved in an
identical manner, we provide one below:
\begin{align*}
\mathcal{V}_2\cdot A_{21}^+&=(x_{21}-x_{22})\cdot-\delta^{21}\frac{\prod_{i=1}^{3}x_{3i}-x_{21}}{x_{22}-x_{21}}\\
&=-\delta^{21}\frac{\prod_{i=1}^{3}x_{3i}-x_{21}}{x_{22}-x_{21}}\cdot(x_{21}-x_{22}+1)\\
&=A_{21}^+\mathcal{V}_2+A_{21}^+.
\end{align*}
Thus, $[\mathcal{V}_2,A_{21}^+]=A_{21}^+=\alpha_{21}(\mathcal{V}_2)A_{21}^+$.\\
\ref{rel: 21 and 22 with opposite powers commute}
Consider the following calculation:
\begin{align*}
A_{21}^+A_{22}^-&=-\delta^{21}\frac{\prod_{i=1}^{3}x_{3i}-x_{21}}{x_{22}-x_{21}}
\cdot(\delta^{22})^{-1}\frac{x_{11}-x_{22}}{x_{21}-x_{22}}\\
&=-\delta^{21}(\delta^{22})^{-1}\frac{\prod_{i=1}^{3}x_{3i}-x_{21}}{x_{22}-x_{21}-1}\cdot
\frac{x_{11}-x_{22}}{x_{21}-x_{22}}\\
&= -\delta^{21}(\delta^{22})^{-1}\frac{\prod_{i=1}^{3}x_{3i}-x_{21}}{x_{22}-x_{21}}\cdot
\frac{x_{11}-x_{22}}{x_{21}-x_{22}+1}\\
&=(\delta^{22})^{-1}\frac{x_{11}-x_{22}}{x_{21}-x_{22}}
\cdot-\delta^{21}\frac{\prod_{i=1}^{3}x_{3i}-x_{21}}{x_{22}-x_{21}}\\
&=A_{22}^-A_{21}^+.
\end{align*}
The other relation is proved similarly.\\
\ref{rel: 11 and 2i with opposite powers commute} Consider the following calculation:
\begin{align*}
A_{11}^+A_{22}^-&=-\delta^{11}(x_{21}-x_{11})(x_{22}-x_{11})
\cdot(\delta^{22})^{-1}\frac{x_{11}-x_{22}}{x_{21}-x_{22}}\\
&=-\delta^{11}(\delta^{22})^{-1}(x_{21}-x_{11})(x_{22}-x_{11}-1)
\cdot\frac{x_{11}-x_{22}}{x_{21}-x_{22}}\\
&= -\delta^{11}(\delta^{22})^{-1}(x_{21}-x_{11})(x_{22}-x_{11})
\cdot\frac{x_{11}-x_{22}+1}{x_{21}-x_{22}}\\
&=(\delta^{22})^{-1}\frac{x_{11}-x_{22}}{x_{21}-x_{22}}
\cdot-\delta^{11}(x_{21}-x_{11})(x_{22}-x_{11})\\
&=A_{22}^-A_{11}^+.
\end{align*}
The other relations are proved similarly.\\
\ref{rel: 21+22 is polynomial} We consider the relation
$[E_{23},E_{32}]=E_{22}-E_{33}$ mapped under $\varphi$ from (\ref{eq: Ugln map}):
\begin{align*}
X_{22}-X_{33}&=[X_2^+,X_2^-]\\
&=[A_{21}^+ + A_{22}^+,A_{21}^-+A_{22}^-]\\
&=[A_{21}^+,A_{21}^-]+[A_{21}^+,A_{22}^-]+[A_{22}^+,A_{21}^-]+[A_{22}^+,A_{22}^-]\\
&=[A_{21}^+,A_{21}^-]+[A_{22}^+,A_{22}^-]\quad\text{by \ref{rel: 21 and 22 with opposite powers commute}}.
\end{align*}
This demonstrates that \ref{rel: 21+22 is polynomial} holds.\\
\ref{rel: serre type relation in A(gl3)} We observe that
\begin{align*}
A_{11}^-A_{22}^-&=(\delta^{11})^{-1}
\cdot(\delta^{22})^{-1}\frac{x_{11}-x_{22}}{x_{21}-x_{22}}\\
&=(\delta^{11}\delta^{22})^{-1}\frac{x_{11}-x_{22}}{x_{21}-x_{22}}\\
&=(\delta^{22})^{-1}\frac{x_{11}-x_{22}+1}{x_{21}-x_{22}}\cdot(\delta^{11})^{-1}\\
&=A_{22}^- A_{11}^- -(\delta^{11}\delta^{22})^{-1}\frac{1}{x_{21}-x_{22}}\\
[A_{11}^-,A_{22}^-]&=-(\delta^{11}\delta^{22})^{-1}\frac{1}{x_{21}-x_{22}},
\end{align*}
which has no $x_{11}$'s and as such commutes with $A_{11}^-$. Thus, $[A_{11}^-,[A_{11}^-,A_{22}^-]]=0$.
The others are proved identically.\\
\ref{rel: weird relation in A(gl3)} We prove this by direct computation as follows:
\begin{align*}
A_{22}^\pm\mathcal{V}_2A_{21}^\pm
&=(\delta^{21}\delta^{22})^{\pm1}\frac{\prod_{i=1}^{2\pm1}(x_{2\pm1,i}-x_{21})(x_{2\pm1,i}-x_{22})}{x_{21}-x_{22}}\\
&=-(\delta^{21})^{\pm1}\prod_{i=1}^{2\pm1}x_{2\pm1,i}-x_{21}\cdot (\delta^{22})^{\pm1}\frac{\prod_{i=1}^{2\pm1}x_{2\pm1,i}-x_{22}}{x_{21}-x_{22}}\\
&=(\delta^{21})^{\pm1}\prod_{i=1}^{2\pm1}x_{2\pm1,i}-x_{21}\cdot\frac{\mathcal{V}_2}{-\mathcal{V}_2}(\delta^{22})^{\pm1}\frac{\prod_{i=1}^{2\pm1}x_{2\pm1,i}-x_{22}}{x_{21}-x_{22}}\\
&=A_{21}^\pm\mathcal{V}_2A_{22}^\pm.
\end{align*}
This verifies that relation \ref{rel: weird relation in A(gl3)} holds.
\end{proof}

\begin{openprob}
Determine whether the relations in Proposition \ref{prop: A(gl3) relations}
constitute a presentation for the algebra $\Anlg{\gl_3}.$
\end{openprob}

%%%%%%%%%%%%%%%%%%%%%%%%%%%%%%%%%%%%%%%%
%            End of Section
%%%%%%%%%%%%%%%%%%%%%%%%%%%%%%%%%%%%%%%%

\section{Finite-Dimensional Modules over \texorpdfstring{$\Anlg{\gl_n}$}{A(gln)}}\label{sec: fd modules over A(gln)}

Since, as was shown in Section \ref{sec: n=3}, $\Anlg{\gl_n}$ is not a Galois
$\widetilde{\Gamma}$-order, techniques different from \cite{FO14} are required
to study representations of $\Anlg{\gl_n}$.

If we consider the case of $n=2$, we recall that 
$\Anlg{\gl_2}\cong U(\gl_2)[T_2]/(T_2^2-(-c_{21}^2+2c_{22}+1))$. As such, it makes
sense to consider the induction and restriction functors between the categories of
$\Anlg{\gl_2}$-modules and $U(\gl_2)$-modules.

By applying the restriction functor to a given finite-dimensional simple module,
we see that it decomposes to a direct sum of finite-dimensional simple
$U(\gl_2)$-modules, so the induction functor should help us to construct
all of the possible finite-dimensional simple $\Anlg{\gl_2}$-modules.

\begin{proposition}\label{prop: f.d. simple A(gl2) mod}
The finite-dimensional simple $\Anlg{\gl_2}$-modules are characterized by ordered pairs
$(\lambda_2,\varepsilon_2)$, where $\lambda_2:=(\lambda_{21},\lambda_{22})\in\C^2$
is a dominant integral weight for $U(\gl_2)$ (i.e. $\lambda_{21}-\lambda_{22}\in\Z_{\geq0}$) and $\varepsilon_2\in\{1,-1\}$.
\end{proposition}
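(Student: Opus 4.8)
The plan is to exploit the presentation $\Anlg{\gl_2}\cong U(\gl_2)[T_2]/\bigl(T_2^2-(-c_{21}^2+2c_{22}+1)\bigr)$ from Proposition \ref{prop: n=2 isomorphism} together with the centrality of $\mathcal{V}_2$, reducing the classification to that of finite-dimensional simple $U(\gl_2)$-modules plus a sign.

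First I would take a finite-dimensional simple $\Anlg{\gl_2}$-module $M$. Since $\mathcal{V}_2\in Z(\Anlg{\gl_2})$ and $\C$ is algebraically closed, Schur's lemma forces $\mathcal{V}_2$ to act on $M$ by a scalar $c\in\C$. Using the decomposition $\Anlg{\gl_2}=U_2\oplus(U_2\cdot\mathcal{V}_2)$ of Lemma \ref{lem: Case 2 two part lemma}~\ref{item: direct sum decomp}, for any nonzero $m\in M$ one has $\Anlg{\gl_2}\cdot m=U_2\cdot m+c\,U_2\cdot m=U_2\cdot m$, and, more to the point, every $U_2$-submodule of $M$ is stable under $\mathcal{V}_2$ (which acts by $c$) and hence is an $\Anlg{\gl_2}$-submodule. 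Therefore $M$ is already simple as a module over $U_2\cong U(\gl_2)$, so $M\cong L(\lambda_2)$ for a unique dominant integral weight $\lambda_2=(\lambda_{21},\lambda_{22})$ with $\lambda_{21}-\lambda_{22}\in\Z_{\geq0}$.

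Next I would determine $c$. Since $\mathcal{V}_2^2=-c_{21}^2+2c_{22}+1\in Z(U(\gl_2))$, it acts on $L(\lambda_2)$ by the scalar obtained by evaluating the Gelfand invariants on the highest weight vector: $c_{21}=E_{11}+E_{22}$ acts by $\lambda_{21}+\lambda_{22}$ and $c_{22}$ acts by $\lambda_{21}^2+\lambda_{22}^2+\lambda_{21}-\lambda_{22}$, so $\mathcal{V}_2^2$ acts by $-(\lambda_{21}+\lambda_{22})^2+2(\lambda_{21}^2+\lambda_{22}^2+\lambda_{21}-\lambda_{22})+1=(\lambda_{21}-\lambda_{22}+1)^2$, in agreement with the computation $\mathcal{V}_2^2=(x_{21}-x_{22})^2$ in the proof of Proposition \ref{prop: n=2 isomorphism}. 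Hence $c^2=(\lambda_{21}-\lambda_{22}+1)^2$, and because $\lambda_{21}-\lambda_{22}+1\geq1>0$ we may write $c=\varepsilon_2(\lambda_{21}-\lambda_{22}+1)$ for a well-defined $\varepsilon_2\in\{1,-1\}$, producing the pair $(\lambda_2,\varepsilon_2)$ attached to $M$. For the converse, given $(\lambda_2,\varepsilon_2)$ I would extend the $U(\gl_2)$-action on $L(\lambda_2)$ to $\Anlg{\gl_2}$ by letting $\mathcal{V}_2$ act by $\varepsilon_2(\lambda_{21}-\lambda_{22}+1)$; this is well-defined by the presentation of Proposition \ref{prop: n=2 isomorphism} precisely because this scalar squares to the one by which $-c_{21}^2+2c_{22}+1$ acts on $L(\lambda_2)$. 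The resulting module is simple over $U_2$, hence over $\Anlg{\gl_2}$, and two such modules are isomorphic iff their underlying $U(\gl_2)$-modules agree (forcing $\lambda_2$ equal) and the scalar by which $\mathcal{V}_2$ acts agrees (forcing $\varepsilon_2$ equal), giving the claimed bijection.

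The step I expect to be the crux is the observation in the second paragraph that centrality of $\mathcal{V}_2$ collapses $M|_{U_2}$ to a single simple $U(\gl_2)$-module; everything afterward is the elementary computation of a central character plus square-root bookkeeping. No serious obstacle is anticipated, the only subtlety being to record that $\lambda_{21}-\lambda_{22}+1\neq0$, which is what guarantees that the two choices $\varepsilon_2=\pm1$ genuinely yield non-isomorphic modules rather than coinciding.
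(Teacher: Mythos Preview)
Your argument is correct and rests on the same two ingredients as the paper's: centrality of $\mathcal{V}_2$ and the presentation of Proposition~\ref{prop: n=2 isomorphism}. The only difference is one of orientation. The paper begins by inducing $V(\lambda_2)$ up to $\Anlg{\gl_2}$, obtains a rank-two module, and diagonalizes the action of $T_2$ to split it into two simples; its converse is exactly your forward direction (restriction of a simple stays simple because $T_2$ is central). You instead start with Schur's lemma on a given simple and pin down the scalar afterwards, which is slightly more direct and avoids writing out the induced module. Your central-character computation is also sharper: the scalar for $\mathcal{V}_2^2$ on $L(\lambda_2)$ is indeed $(\lambda_{21}-\lambda_{22}+1)^2$, matching Definition~\ref{def: A(gln) class of fd simple modules}, whereas the paper's proof records $(\lambda_{21}-\lambda_{22})^2$; your observation that $\lambda_{21}-\lambda_{22}+1\geq 1$ then cleanly guarantees that the two sign choices are genuinely distinct.
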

\begin{proof}
Recall that every finite-dimensional simple $U(\gl_2)$-module is characterized by a weight denoted by a pair
of complex numbers $\lambda_2=(\lambda_{21},\lambda_{22})$ with $\lambda_{21}-\lambda_{22}\in\Z_{\geq0}$; we will denote this module by
$V(\lambda_2)$. We can induce such a module $V(\lambda_2)$
to a $\Anlg{\gl_2}$-module as follows,
\[
\Anlg{\gl_2}\otimes_{U(\gl_2)} V(\lambda_2).
\]
So, it is important to describe $\Anlg{\gl_2}$ as a right $U(\gl_2)$-module. By Proposition \ref{prop: n=2 isomorphism}:
\[
\Anlg{\gl_2}\cong\frac{U(\gl_2)[T_2]}{(T_2^2-(-c_{21}^2+2c_{22}+1))}
\cong U(\gl_2)\oplus T_2U(\gl_2)
\]
as right $U(\gl_2)$-modules. Thus:
\begin{align*}
\Anlg{\gl_2}\otimes_{U(\gl_2)} V(\lambda_2)
&\cong \big(U(\gl_2)\oplus T_2U(\gl_2)\big)\otimes_{U(\gl_2)} V(\lambda_2)\\
&\cong \big(U(\gl_2)\otimes_{U(\gl_2)} V(\lambda_2)\big)
\oplus\big(T_2U(\gl_2)\otimes_{U(\gl_2)} V(\lambda_2)\big)\\
&\cong\big(1\otimes_{U(\gl_2)}V(\lambda_2)\big)
\oplus\big(T_2\otimes_{U(\gl_2)}V(\lambda_2)\big).
\end{align*}
As such, we can determine the action of $T_2$ on this modules now. For
$v\in V(\lambda_2)$, we have that $T_2.(1\otimes v)=T_2\otimes v$, and
$T_2.(T_2\otimes v)=T_2^2\otimes v=1\otimes T_2^2.v=(\lambda_{21}-\lambda_{22})^2(1\otimes v)$.
Thus, $T_2$ can be characterized by the following matrix:
\[
\begin{bmatrix}
0 & (\lambda_{21}-\lambda_{22})^2I\\
I & 0
\end{bmatrix}
\cong
\begin{bmatrix}
(\lambda_{21}-\lambda_{22})I & 0\\
0 & -(\lambda_{21}-\lambda_{22})I
\end{bmatrix},
\]
so we can see that $\Anlg{\gl_2}\otimes_{U(\gl_2)}V(\lambda_2)$ decomposes into
the two eigenspaces of the action of $T_2$: $V(\lambda_2,+1)
:=\langle(\lambda_{21}-\lambda_{22})(1\otimes v)+T_2\otimes v\mid
v\in V(\lambda_2)\rangle$ and $V(\lambda_2,-1)
:=\langle-(\lambda_{21}-\lambda_{22})(1\otimes v)+T_2\otimes v\mid
v\in V(\lambda_2)\rangle$ both of which are clearly simple. It is also
clear that as vector spaces $V(\lambda_2,\pm1)\cong V(\lambda_2)$.

Conversely, if we have a finite-dimensional simple $\Anlg{\gl_2}$-module $V$ restricted
to a $U(\gl_2)$-module, it must remain simple, as $T_2$ is a central element. As such,
$V\cong V(\lambda_2)$ for some weight $\lambda_2$. Thus,
$V\cong V(\lambda_2,\varepsilon_2)$ for some $\varepsilon_2\in\{\pm1\}$.
\end{proof}

Next, we classify a collection of finite-dimensional simple weight modules over $\Anlg{\gl_n}$.

\begin{definition}\label{def: A(gln) class of fd simple modules}
Let $V(\lambda_n)$ be a weight module of $U(\gl_n)$, we extend it to a module for $\Anlg{\gl_n}$, denoted
$V(\lambda_n,\varepsilon_n,\varepsilon_{n-1},\ldots,\varepsilon_2)$, by describing the actions of each $\mathcal{V}_k$ for
$k=2,3,\ldots,n$ as follows:
\[
\mathcal{V}_n.v=\varepsilon_{n}\prod_{i\leq j}(\lambda_{ni}-\lambda_{nj}+j-i)v,
\]
with $\varepsilon_n=\pm1$. Recall that when we restrict $V(\lambda_n)$ to a $U(\gl_k)$ module, the number of
simple $U(\gl_k)$ modules it decomposes into is the same as the number of ways to fill in the $k$-th row of a Gelfand-Tsetlin
pattern with top row $\lambda_n$. Denote this number by $r_{\lambda_n,k}$. Then let $\mathcal{V}_k$ act diagonallizably
on a $v=(v_1,\ldots,v_{r_{\lambda_n,k}})\in V(\lambda_n,\varepsilon_n,\varepsilon_{n-1},\ldots,\varepsilon_2)$ by the following $r_{\lambda_n,k}\times r_{\lambda_n,k}$ matrix,
\[
\begin{pmatrix}
\varepsilon_{k,1} \prod_{i\leq j}(\lambda_{ki}^1-\lambda_{kj}^1+j-i) & 0 & \cdots & 0\\
0 & \varepsilon_{k,2} \prod_{i\leq j}(\lambda_{ki}^2-\lambda_{kj}^2+j-i) & \cdots & 0\\
 & & \ddots & \\
0 & \cdots & &\varepsilon_{k,r_{\lambda_n,k}} \prod_{i\leq j}(\lambda_{ki}^{r_{\lambda_n,k}}-\lambda_{kj}^{r_{\lambda_n,k}}+j-i)
\end{pmatrix},
\]
where $\lambda_{ki}^\ell$ denotes the $ki$ entry from the $\ell$-th pattern in the decomposition of $v$ as a $U(\gl_k)$-module,
and $\varepsilon_{k}=(\varepsilon_{k,1},\varepsilon_{k,2},\ldots,\varepsilon_{k,r_{\lambda_n,k}})\in\{\pm1\}^{r_{\lambda_n,k}}$.
\end{definition}

\begin{theorem}\label{thm: A(gln) finite dim simple modules}
Every finite-dimensional simple module over $\Anlg{\gl_n}$, on which
$\mathcal{V}_2,\ldots,\mathcal{V}_{n-1}$ act diagonallizably, is of the form
$V(\lambda_n,\varepsilon_n,\varepsilon_{n-1},\ldots,\varepsilon_2)$ (see Definition \ref{def: A(gln) class of fd simple modules}), where
$\lambda_n=(\lambda_{n1},\lambda_{n2},\ldots,\lambda_{nn})$ is a dominant integral weight of $U(\gl_n)$,
$\varepsilon_j\in\{\pm1\}^{r_{\lambda_n,j}}$, with $r_{\lambda_n,j}$ denoting the number
of ways to fill the $j$-th row of Gelfand-Tsetlin pattern with fixed top row $\lambda_n$,
and $j=2,3,\ldots,n$.
\end{theorem}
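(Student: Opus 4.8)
The plan is to mirror the structure of the $n=2$ argument in Proposition \ref{prop: f.d. simple A(gl2) mod}, but to carry out the restriction/induction step through the chain $\Anlg{\gl_1}\subset\Anlg{\gl_2}\subset\cdots\subset\Anlg{\gl_n}$ from Proposition \ref{prop: basic properties of A(gln)}\ref{item: subalgebra chain for A(gln)}. First I would fix a finite-dimensional simple $\Anlg{\gl_n}$-module $V$ on which $\mathcal{V}_2,\ldots,\mathcal{V}_{n-1}$ act diagonalizably (and note $\mathcal{V}_n$ acts by a scalar since it is central by \ref{item: Vn is central in A(gln)}, and squares to an element of $U_n$, hence is a root of $\mathcal{V}_n^2$ in $Z(U_n)$, forcing the scalar to be $\pm\prod_{i<j}(\lambda_{ni}-\lambda_{nj}+j-i)$ once we know the $U(\gl_n)$-highest weight). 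Restricting $V$ to $U_n\cong U(\gl_n)$: since all the $\mathcal{V}_k$ are central in $U_n$ (they only involve the $x_{k,\ast}$ which are fixed by $\delta^{\ell j}$ for $\ell\neq k$, and $X_k^\pm$ for the ``missing'' $k$ acts trivially on those row variables), the $\mathcal{V}_k$ act as $U_n$-module endomorphisms; so by Schur each isotypic component $V(\mu)^{\oplus m_\mu}$ is $\mathcal{V}_k$-invariant, and $V$ is a direct sum of finite-dimensional $U(\gl_n)$-modules — hence semisimple over $U(\gl_n)$ and, $V$ being $\Anlg{\gl_n}$-simple and $\mathcal{V}_n$ central, indecomposable enough to force the $U(\gl_n)$-socle to be a single isotype $V(\lambda_n)^{\oplus m}$ with $\lambda_n$ dominant integral.

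Next I would pin down the multiplicity and the $\mathcal{V}_k$-action. The key computation is Lemma \ref{lem: Case 2 two part lemma}-style: for each $k$, $\mathcal{V}_k^2\in\Gamma\subseteq Z(U_k)$, and restricted to $U_k$ the element $\mathcal{V}_k$ commutes with $U_k$; under the branching $U(\gl_n)\downarrow U(\gl_k)$, $V(\lambda_n)$ decomposes into the $r_{\lambda_n,k}$ simple $U(\gl_k)$-summands indexed by fillings of the $k$-th GT row, and on the summand with $k$-th row $\mu^{(k)}$ the central element $\mathcal{V}_k$ of $U_k$ acts by the scalar $\pm\prod_{i<j}(\mu^{(k)}_i-\mu^{(k)}_j+j-i)$ (the value of the Harish-Chandra image of $\mathcal{V}_k$ at $\mu^{(k)}+\rho$, using that $\mathcal{V}_k^2$ maps to the Vandermonde discriminant, exactly as in the $c_{2i}$ computation in Proposition \ref{prop: n=2 isomorphism}). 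The sign on each such summand is a free choice — this is where the $\varepsilon_{k,\ell}\in\{\pm1\}$ parameters enter — and the hypothesis that $\mathcal{V}_k$ acts diagonalizably is exactly what guarantees each summand sees $+$ or $-$ rather than a nontrivial Jordan block (compare the matrix $\left[\begin{smallmatrix}0 & c^2\\ 1 & 0\end{smallmatrix}\right]$ in Proposition \ref{prop: f.d. simple A(gl2) mod}). Then I would show that once $\lambda_n$ and all the signs $\varepsilon_j=(\varepsilon_{j,\ell})_\ell$ are chosen, the $\Anlg{\gl_n}$-action is determined and the resulting module is simple: any $\Anlg{\gl_n}$-submodule is in particular a $U(\gl_n)$-submodule, hence a sum of copies of $V(\lambda_n)$; but applying the $\mathcal{V}_k$'s moves between the various GT-row eigenspaces inside a single copy of $V(\lambda_n)$, so a nonzero submodule must be everything. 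Conversely the construction in Definition \ref{def: A(gln) class of fd simple modules} does produce a well-defined module, since the listed actions satisfy the (verified) relations — in particular they are consistent because the $\mathcal{V}_k$ act by scalars matching the central action on each $U(\gl_k)$-block, so all relations of the form ``$\mathcal{V}_k$ commutes with $U_k$'' and ``$\mathcal{V}_k^2 = $ (element of $\Gamma$)'' hold automatically, and the cross-relations like $[X_m^\pm,\mathcal{V}_k]$ for $m\neq k,k-1$ reduce to the known $U(\gl_n)$ branching combinatorics.

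Finally I would assemble: the assignment $V\mapsto(\lambda_n,\varepsilon_n,\ldots,\varepsilon_2)$ and the construction of Definition \ref{def: A(gln) class of fd simple modules} are mutually inverse, giving the classification. The main obstacle I anticipate is the bookkeeping in the branching step: showing cleanly that the $\mathcal{V}_k$ (for $k<n$) are simultaneously diagonalizable with the claimed eigenvalues and that their eigenspaces are precisely the GT-row isotypic pieces of the $U(\gl_n)\downarrow U(\gl_k)$ restriction — i.e., that $\mathcal{V}_k$, although not central in $\Anlg{\gl_n}$, restricts to a central element of $U_k$ whose spectrum on $V(\lambda_n)$ separates exactly the $r_{\lambda_n,k}$ branching multiplicities. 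This requires knowing that the Harish-Chandra image of $\mathcal{V}_k^2$ (the Vandermonde discriminant in the $x_{k,\ast}$) takes distinct nonzero values on distinct admissible $k$-th rows of a GT pattern with top row $\lambda_n$ — true because dominant integral weights have strictly decreasing $\rho$-shift coordinates so the discriminant is nonzero, and because the $\mathcal{V}_k$-eigenvalue together with the $U(\gl_k)$-infinitesimal character recovers the whole $k$-th row. Everything else is a routine adaptation of the $n=2$ case and the generalized-Weyl-algebra simplicity argument.
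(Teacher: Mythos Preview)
Your proposal contains a genuine error that breaks the argument as written. You claim that ``all the $\mathcal{V}_k$ are central in $U_n$'' and hence act as $U_n$-module endomorphisms of $V$. This is false for $k<n$: the paper itself computes (Section~\ref{sec: n=3}, equation~(\ref{eq: V2 commutator identity})) that $[X_2^\pm,\mathcal{V}_2]=\pm\widetilde{X}_2^\pm\neq 0$, and more generally $X_k^\pm$ involves the shifts $\delta^{ki}$ which do \emph{not} fix the row-$k$ variables $x_{k\ast}$. So your parenthetical justification (``$X_k^\pm$ for the `missing' $k$ acts trivially on those row variables'') is simply wrong. Relatedly, the inclusion ``$\Gamma\subseteq Z(U_k)$'' you invoke in the second paragraph is also false: the Gelfand--Tsetlin subalgebra $\Gamma$ is commutative but not central in $U_k$. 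What \emph{is} true is that $\mathcal{V}_k$ is central in $U_k$ (not $U_n$) and that $\mathcal{V}_k^2\in Z(U_k)$; you state this correctly later, but the first paragraph relies on the stronger false claim.

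The immediate casualty is your isotypicity argument: $\mathcal{V}_k$ is not a $U_n$-intertwiner, so Schur does not apply as you use it. This particular conclusion can be rescued, since $Z(U_n)\subseteq Z(\Anlg{\gl_n})$ (the Gelfand invariants $c_{nk}$ map to symmetric polynomials in the $x_{n\ast}$, which commute with every $\mathcal{V}_j$), so on a simple $V$ the center $Z(U_n)$ acts by scalars and $V|_{U_n}\cong V(\lambda_n)^{\oplus m}$. But the more serious problem is downstream: because $\mathcal{V}_k$ does not commute with $X_k^\pm$, it does not preserve a chosen $U_n$-summand $V(\lambda_n)\subset V$, and your sketch gives no mechanism to force $m=1$ or to show that ``applying the $\mathcal{V}_k$'s moves between the various GT-row eigenspaces \emph{inside a single copy} of $V(\lambda_n)$.'' That step is exactly where the non-commutation $[X_k^\pm,\mathcal{V}_k]\neq 0$ bites. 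The paper sidesteps this by a different route: it argues by induction on $n$ through the commutative square of restriction functors $\Anlg{\gl_n}\fdMod\to\Anlg{\gl_{n-1}}\fdMod$ and $U(\gl_n)\fdMod\to U(\gl_{n-1})\fdMod$, using Proposition~\ref{prop: f.d. simple A(gl2) mod} as the base case and the centrality of $\mathcal{V}_n$ (only) at each stage, rather than trying to treat all the $\mathcal{V}_k$ at once over $U_n$.
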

\begin{proof}
We prove this by induction on $n$. For the base case, $n=3$, we have the
following commutative diagram:
\begin{figure}[!ht]
\centering
\begin{tikzpicture}
\node(A3)                       {$\Anlg{\gl_3}\fdMod$};
\node(A2)   [right=1cm of A3]   {$\Anlg{\gl_2}\fdMod$};
\node(U3)   [below=1cm of A3]   {$U(\gl_3)\fdMod$};
\node(U2)   [below=1cm of A2]   {$U(\gl_2)\fdMod$};
\foreach \x in {2,3} {
\draw[->,>=stealth,black] (A\x) -- (U\x);
}
\foreach \x in {U,A} {
\draw[->,>=stealth,black] (\x3) -- (\x2);
}
\end{tikzpicture},
\end{figure}

\noindent where each arrow is the restriction functor. If we consider a simple 
$V\in\Anlg{\gl_3}\fdMod$ and its image in the bottom right corner, we see that
$V\cong\bigoplus_{\lambda_3}\bigoplus_{\lambda_2}V(\lambda_2)_{\lambda_3}
\in U(\gl_2)\fdMod$, where $\lambda_3$ and $\lambda_2$ are weights for $U(\gl_3)$ and
$U(\gl_2)$, respectively, by the semi-simplicity of $U(\gl_3)$ and $U(\gl_2)$. Moreover,
$V(\lambda_2)_{\lambda_3}$'s are the components of the restriction of $V(\lambda_3)$
to $U(\gl_2)$. We know that $\mathcal{V}_2$ must have a diagonal action by assumption.
As such, we have $V\cong\bigoplus_{\lambda_3}\bigoplus_{\lambda_2}
V(\lambda_2,\varepsilon_2)_{\lambda_3}$ in the upper right corner by Proposition
\ref{prop: f.d. simple A(gl2) mod}, where $\varepsilon_2=\varepsilon_2(\lambda_2)$
depends $\lambda_2$. This is because otherwise the dimensions of the $\lambda_2$ weight
spaces would not match. Since $\mathcal{V}_2$ acts diagonally, $\mathcal{V}_3$ is central, and
the diagram commutes, it follows that
$V\cong V(\lambda_3,\varepsilon_3,\varepsilon_2)\in\Anlg{\gl_3}\fdMod$, where
$\varepsilon_3$ is determined as in Proposition \ref{prop: f.d. simple A(gl2) mod},
and $\varepsilon_2=\{\varepsilon_2(\lambda_2)\}_{\lambda_2}$ is indexed by the
number $r_{\lambda_3,2}$.

To finish the induction we look at a similar diagram:
\begin{figure}[!ht]
\centering
\begin{tikzpicture}
\node(A3)                       {$\Anlg{\gl_n}\fdMod$};
\node(A2)   [right=1cm of A3]   {$\Anlg{\gl_{n-1}}\fdMod$};
\node(A1)   [right=0.5cm of A2]   {$\cdots$};
\node(A0)   [right=0.5cm of A1]   {$\Anlg{\gl_2}\fdMod$};
\node(U3)   [below=1cm of A3]   {$U(\gl_n)\fdMod$};
\node(U2)   [below=1cm of A2]   {$U(\gl_{n-1})\fdMod$};
\node(U1)   [below=1.25cm of A1]   {$\cdots$};
\node(U0)   [below=1cm of A0]   {$U(\gl_2)\fdMod$};
\foreach \x in {0,2,3} {
\draw[->,>=stealth,black] (A\x) -- (U\x);
}
\foreach \x in {U,A} {
\draw[->,>=stealth,black] (\x3) -- (\x2);
\draw[->,>=stealth,black] (\x2) -- (\x1);
\draw[->,>=stealth,black] (\x1) -- (\x0);
}
\end{tikzpicture}
\end{figure}

Following the image of a simple $V\in\Anlg{\gl_n}\fdMod$ and using identical arguments,
we observe that:
\[
V\cong\bigoplus_{\lambda_n}\bigoplus_{\lambda_{n-1}}V(\lambda_{n-1})_{\lambda_n}
\in U(\gl_{n-1})\fdMod.
\]
By the induction hypothesis,
\[
V\cong\bigoplus_{\lambda_n}\bigoplus_{\lambda_{n-1}}
V(\lambda_{n-1},\varepsilon_{n-1},\varepsilon_{n-2},
\ldots,\varepsilon_2)_{\lambda_n}\in \Anlg{\gl_{n-1}}\fdMod.
\]
Finally by $\mathcal{V}_n$ central, $\mathcal{V}_j$ acting diagonally for $j=2,\ldots,n-1$,
and the diagram commuting, it follows that
$V\cong V(\lambda_n,\varepsilon_n,\varepsilon_{n-1},\ldots,\varepsilon_2)$.
\end{proof}

The following example demonstrates that $\Anlg{\gl_n}\fdMod$ is not semi-simple for every
$n\geq2$.

\begin{example}\label{ex: counterexample to ss of fd Agln-mod}
We recall that $\mathcal{V}_2^2$ must act diagonally
on any $\Anlg{\gl_2}$-module $V$ because $\Res{U(\gl_2)}{\Anlg{\gl_2}}V$
can be viewed as a direct sum of irreducible $U(\gl_2)$-modules and
$\mathcal{V}_2^2$ is a quadratic polynomial of Gelfand invariants in
$U(\gl_2)$. Let $V=V(0)\oplus V(0)$, where $U(\gl_2)$ acts trivially. This means that
$\mathcal{V}_2^2$ must act as $\Id_V$. We define the following action of $\mathcal{V}_2$
\[
\mathcal{V}_2.
\begin{pmatrix}
v_1\\
v_2
\end{pmatrix}
=\begin{pmatrix}
1 & \alpha\\
0 & -1
\end{pmatrix}
\begin{pmatrix}
v_1\\
v_2
\end{pmatrix}
\]
\noindent with $0\neq\alpha\in\C$. It is clear then that $\mathcal{V}_2^2$
acts as the identity on $V$, but the subrepesentation $W=\{(v_1,0)\mid v_1\in V(0)\}$
is not a direct summand of $V$ as a $\Anlg{\gl_2}$-module.
\end{example}

%%%%%%%%%%%%%%%%%%%%%%%%%%%%%%%%%%%%%%%%
%            End of Section
%%%%%%%%%%%%%%%%%%%%%%%%%%%%%%%%%%%%%%%%

\section{A Technique for Creating Galois Orders from Galois Rings Via
Localization}\label{sec: locolization theorem}

In this section, we describe a technique that allows us to turn a Galois ring
into a Galois order involving localization. We use this technique on a toy example
and a localized version of $\Anlg{\gl_n}$ denoted $\tAnlg{\gl_n}$
(see Definition \ref{def: localized A(gln)}).

\subsection{The general result}
We recall that Proposition \ref{prop: Gamma maxl comm in a Galois Order}
states that $\Gamma$ is maximal commutative in a Galois $\Gamma$-order. We observe that
for a general Galois $\Gamma$-ring $\mathscr{U}$, while $\Gamma$ might not be maximal
commutative, its centralizer $C_{\mathscr{U}}(\Gamma)$ in $\mathscr{U}$ will be \cite{FO10}.
This can be seen from the following remark:
\begin{remark}
For Galois $\Gamma$-ring $\mathscr{U}$, the centralizer of $\Gamma$ in $\mathscr{U}$,
denoted $C_{\mathscr{U}}(\Gamma)$, is equal to $\mathscr{U}\cap K$.
\end{remark}

First we define a subring of $L$ that is needed in our result.
\begin{definition}\label{def: ring of coefficients}
Let $\mathscr{U}$ be a subalgebra of $\mathscr{L}$. We define the
\emph{ring of coefficients} of $\mathscr{U}$:
\[
D_\mathscr{U}:=\langle\alpha\in L\mid \exists X\in\mathscr{U}\text{ such that $\alpha$
is a left coefficient of some $\mu\in\supp_\mathscr{M}X$}\rangle_{\rm ring}.
\]
Similarly, we define the \emph{opposite ring of coefficients} of $\mathscr{U}$, denoted
$D_{\mathscr{U}}^{\rm op}$, using right coefficients.
\end{definition}

Now for the result.

\begin{theorem}\label{thm: Order if centralizer is a nice localization for arbitrary G}
Let $G$ be arbitrary and $\mathscr{U}$ be a Galois $\Gamma$-ring in $(L\#\mathscr{M})^G$.
If $C=C_{\mathscr{U}}(\Gamma)$ is the $G$ invariants of the localization of $\Lambda$
with respect to a set that is $\mathscr{M}$-invariant, that is $C=(S^{-1}\Lambda)^G$,
where $S$ is $\mathscr{M}$-invariant, and $D_\mathscr{U}$ is a finitely generated
module over $C$, then $\mathscr{U}$ is a Galois $C$-order in $(L\#\mathscr{M})^G$.
Moreover, if $D_\mathscr{U}\subseteq S^{-1}\Lambda$ (resp. $D_{\mathscr{U}}^{\rm op}\subset S^{-1}\Lambda$),
then $\mathscr{U}$ is a (co-)principal Galois $C$-order.
\end{theorem}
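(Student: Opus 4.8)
The plan is to transfer the entire Galois-ring set-up along the localization and then exhibit $\mathscr{U}$ inside a standard Galois order. Concretely, I would replace the data $(\Lambda,G,\mathscr{M})$ by $(S^{-1}\Lambda,G,\mathscr{M})$. After possibly enlarging $S$ to $\prod_{g\in G}g(S)$ — which is still $\mathscr{M}$-invariant by (A2) — we may assume $S$ is stable under both $G$ and $\mathscr{M}$, so both act on $S^{-1}\Lambda$ by automorphisms. Then (A1) and (A2) are inherited verbatim because these new actions restrict to the old ones on $\Lambda$, and (A3) holds because $S^{-1}\Lambda$ is module-finite over its ring of $G$-invariants: setting $T=\{\prod_{g\in G}g(s)\mid s\in S\}\subseteq\Lambda^{G}=\Gamma$ one checks $S^{-1}\Lambda=T^{-1}\Lambda$, which is module-finite over $T^{-1}\Gamma$ since $\Lambda$ is module-finite over $\Gamma$ by Lemma~\ref{lem: Hartwig big lemma}, and $T^{-1}\Gamma=(S^{-1}\Lambda)^{G}=C$. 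Since $\Frac(S^{-1}\Lambda)=L$, the skew monoid ring $\mathscr{L}$, the field $K=L^{G}$, and $\mathscr{K}=(L\#\mathscr{M})^{G}$ are unchanged, while the role of $\Gamma$ is now played by $C$.

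Next I would verify that $\mathscr{U}$ is a Galois $C$-ring in $\mathscr{K}$: as $\Gamma\subseteq C\subseteq\mathscr{U}$ and $\mathscr{U}$ is finitely generated as a $\Gamma$-ring, it is finitely generated as a $C$-ring, and $K\mathscr{U}=\mathscr{U}K=\mathscr{K}$ is the same statement as before. Applying Lemma~\ref{lem: Hartwig big lemma} to $(S^{-1}\Lambda,G,\mathscr{M})$ then identifies $S^{-1}\Lambda$ with the integral closure of $C$ in $L$. Since $D_{\mathscr{U}}$ is a finitely generated $C$-module, every element of $D_{\mathscr{U}}$ is integral over $C$; as $D_{\mathscr{U}}\subseteq L$, this already forces $D_{\mathscr{U}}\subseteq S^{-1}\Lambda$ (so the first hypothesis in the ``moreover'' clause is in fact automatic). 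Now for $X=\sum_{\mu}a_{\mu}\mu\in\mathscr{U}$ and $\gamma\in C$ one computes $X(\gamma)=\sum_{\mu}\mu(a_{\mu}\gamma)$: each $a_{\mu}\gamma$ lies in $S^{-1}\Lambda$, which is preserved by every $\mu\in\mathscr{M}$, so $X(\gamma)\in S^{-1}\Lambda$; a short computation with the $G$-invariance of $X$ shows $X(\gamma)\in K$; hence $X(\gamma)\in S^{-1}\Lambda\cap K=(S^{-1}\Lambda)^{G}=C$. Thus $\mathscr{U}\subseteq\mathscr{K}_{C}$, i.e.\ $\mathscr{U}$ is a principal Galois $C$-order, and therefore a Galois $C$-order by the result of \cite{Hartwig20}. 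If in addition $D_{\mathscr{U}}^{\mathrm{op}}\subseteq S^{-1}\Lambda$, the identical argument with co-evaluation gives $X^{\dagger}(\gamma)=\sum_{\mu}\alpha_{\mu}\,\mu^{-1}(\gamma)\in S^{-1}\Lambda\cap K=C$, so $\mathscr{U}\subseteq{}_{C}\mathscr{K}$ and $\mathscr{U}$ is co-principal.

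The step I expect to be the real work is verifying (A3) for the localized data — that $S^{-1}\Lambda$ is genuinely finite over $C=(S^{-1}\Lambda)^{G}$ — together with the bookkeeping needed to arrange $S$ to be simultaneously $G$- and $\mathscr{M}$-stable, and the routine-but-necessary check that evaluating a $G$-invariant element of $\mathscr{K}$ at an element of $K$ returns an element of $K$. Once these are in place, the remainder is a direct appeal to Lemma~\ref{lem: Hartwig big lemma} (which upgrades ``$D_{\mathscr{U}}$ is a finite $C$-module'' to ``$D_{\mathscr{U}}\subseteq S^{-1}\Lambda$'', since $S^{-1}\Lambda$ is the integral closure of $C$ in $L$) and to the standard fact that (co-)principal Galois $C$-orders are Galois $C$-orders.
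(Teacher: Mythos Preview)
Your argument is correct in outline and parallels the paper's set-up, but diverges at the key step. The paper verifies the Galois-order condition directly from Definition~\ref{def: Galois Order defintion}: for a finite-dimensional $K$-subspace $W\subseteq\mathscr{K}$ it reduces (using that $C$ is Noetherian) to $W=\sum_i L\mu_i$ and notes $W\cap\mathscr{U}\subseteq\sum_i D_{\mathscr{U}}\,\mu_i$, which is a finitely generated $C$-module by hypothesis; only afterward does it treat the (co-)principal statement separately. You instead observe that since $D_{\mathscr{U}}$ is a commutative $C$-subalgebra of $L$ that is module-finite over $C$, it is integral over $C$ and hence already lies in the integral closure $S^{-1}\Lambda$ of $C$ in $L$; thus $\mathscr{U}$ is automatically principal, and the Galois-order conclusion then follows from \cite{Hartwig20}. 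Your route is a bit slicker and has the pleasant side effect of showing that the hypothesis $D_{\mathscr{U}}\subseteq S^{-1}\Lambda$ in the ``moreover'' clause is in fact redundant.

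One small correction: the equality $S^{-1}\Lambda=T^{-1}\Lambda$ you assert need not hold, since inverting the norm $N(s)=\prod_{g\in G}g(s)$ forces each $g(s)$ to become a unit, and $g(s)$ need not lie in $S$; in general one only has $S^{-1}\Lambda\subseteq T^{-1}\Lambda$. Fortunately you do not actually need this. What your argument uses is that $S^{-1}\Lambda$ is the integral closure of $C$ in $L$, and this follows directly without enlarging $S$: the localization $S^{-1}\Lambda$ is integrally closed in $L$ (being a localization of the integrally closed domain $\Lambda$), and it is integral over $(S^{-1}\Lambda)^{G}=C$ because $G$ is finite, so $\overline{C}=S^{-1}\Lambda$. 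With that replacement your proof goes through unchanged; the paper arrives at the same identification, phrased as $\overline{C}=(S^{G})^{-1}\overline{\Gamma}=S^{-1}\Lambda$.
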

\begin{proof}
First, we find a $\Lambda^\prime$ such that $(\Lambda^\prime,G,\mathscr{M})$
satisfies the assumptions in Section \ref{sec: Galois Orders}. We define
$\Lambda^\prime=\vl{C}$, the integral closure of $C$ in $L$. We observe that
$C=(S^G)^{-1}\Gamma$. As such, $C$ is a localization, and it follows that:
\begin{equation}
\vl{C}=(S^G)^{-1}\vl{\Gamma}=S^{-1}\Lambda.
\end{equation}
Since $S$ is $\mathscr{M}$-invariant and $\vl{C}$ is integral over $C$,
it follows that $\mathscr{M}$ and $G$ are subgroups of $\Aut(\Lambda^\prime)$.
The first two assumptions clearly hold, and the third follows by
$\Lambda^\prime=S^{-1}\Lambda$.

We have that $\mathscr{U}$ is a Galois $C$-ring since it is a Galois
$\Gamma$-ring and $\Frac(C)=\Frac(\Gamma)=K$. All that remains is to show
that $\mathscr{U}$ is a Galois $C$-order. We consider $W\subset\mathscr{L}$
a finite-dimensional left $L$-subspace and aim to show that $W\cap\mathscr{U}$
is finitely generated as a left $C$-module. $W$ has a finite basis
$w_1,\ldots,w_n$ such that:
\[
W=\{\sum \alpha_iw_i\mid \alpha_i\in L\}.
\]
Note that for each $i$, $w_i=\sum_{\mu\in\mathscr{M}} \beta_{i,\mu} \mu$;
as such, since $C$ is a localization of a Noetherian ring and therefore Noetherian,
WLOG we can assume $w_i=\mu_i$ for some $\mu_i\in\mathscr{M}$.
Hence:
\[
W=\sum_i L\mu_i.
\]
So, $W\cap\mathscr{U}\subset\sum_i D_\mathscr{U}\mu_i$, and is
therefore finitely generated. A similar argument justifies the claim if $W$
is instead a right $L$-module. Therefore, $U$ is a Galois $C$-order. 

If additionally we assume $D_\mathscr{U}\subset S^{-1}\Lambda$, we need
to show that $X(c)\in C$ for all $X\in\mathscr{U}$ and $c\in C$. So, we consider
an arbitrary $c\in C$ and $X\in\mathscr{U}$. By Lemma 2.19 in \cite{Hartwig20},
it follows that $X(c)\in K$. Since $C=(S^G)^{-1}\Gamma$, it follows that
$X(c)\in S^{-1}\Lambda$. As such:
\begin{equation}
X(c)\in S^{-1}\Lambda\cap K=(S^{-1}\Lambda)^G=C.
\end{equation}
Thus $X(c)\in C$. If instead $D_{\mathscr{U}}^{\rm op}\subset S^{-1}\Lambda$,
a similar argument shows that $X^\dagger(c)\in C$, thereby proving the claim.
\end{proof}

The above theorem also gives an alternate proof to one direction of Corollary 2.15 in \cite{Hartwig20}.

\subsection{A toy example}\label{subsec: Toy example}
In this subsection, we provide a family of simple examples of Galois rings
to which Theorem \ref{thm: Order if centralizer is a nice localization for arbitrary G}
can be applied.

Let $\Lambda=\C[x]$ the polynomial algebra in one indeterminate $x$, $\delta\in\Aut{}{\Lambda}$ such that $\delta(x)=x-1$,
$\mathscr{M}=\langle\delta\rangle_{\rm grp}$, and $G$ the trivial group.
Then, let $\mathscr{L}=L\#\mathscr{M}$ be the skew-monoid ring and $f(x)\in\C[x]$
such that $f(0)\neq0$. We define $X,Y\in\mathscr{L}$ such that:
\begin{equation}
X:=\delta\frac{f(x)}{x}\qquad\text{and}\qquad Y:=\delta^{-1}.
\end{equation}
Let $U_f=\C\langle\Lambda,X,Y\rangle_{\rm alg}$ and $C_{U_f}(\Lambda)(=C_{U_f})$
the centralizer of $\Lambda$ in $U_f$. We note, as $G$ is trivial,
that $\Lambda=\Gamma$. First, we will show that $U_f$ is Galois $\Gamma$-ring.

\begin{proposition}
The algebra $U_f$ is a Galois $\Gamma$-ring in $L\#\mathscr{M}$.
\end{proposition}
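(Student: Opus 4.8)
The plan is to apply Proposition~\ref{prop: Gamma Ring Alt Conditions}(1) more or less verbatim. Since $G$ is trivial, $\mathscr{K}=\mathscr{L}^G=L\#\mathscr{M}$ and $\Gamma=\Lambda^G=\Lambda=\C[x]$, so I first want to record that $U_f$ is a $\Gamma$-ring in $\mathscr{K}$ in the required sense: by construction $U_f=\C\langle\Lambda,X,Y\rangle_{\rm alg}$ contains $\Gamma$ and is generated over $\Gamma$ by the two elements $X$ and $Y$, both of which lie in $\mathscr{L}=\mathscr{K}$. Hence $U_f$ is a finitely generated $\Gamma$-subring of $\mathscr{K}$.

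Next I would compute the supports over $\mathscr{M}$ of the two generators. Since $f$ is a nonzero polynomial (indeed $f(0)\neq 0$), the rational function $f(x)/x$ is a nonzero element of $L=\C(x)$, hence a unit of $L$; therefore $X=\delta\cdot\big(f(x)/x\big)$ has $\supp X=\{\delta\}$ and $Y=\delta^{-1}$ has $\supp Y=\{\delta^{-1}\}$. Consequently $\bigcup_{u\in\{X,Y\}}\supp u=\{\delta,\delta^{-1}\}$, and since $\mathscr{M}=\langle\delta\rangle_{\rm grp}\cong\Z$, this set generates $\mathscr{M}$ as a monoid. Proposition~\ref{prop: Gamma Ring Alt Conditions}(1) then yields that $U_f$ is a Galois $\Gamma$-ring in $L\#\mathscr{M}$.

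Alternatively — and this is the route I would mention as a sanity check — one can verify the defining condition $L U_f=U_f L=L\#\mathscr{M}$ directly: because $f(x)/x$ is a unit of $L$ we get $L X=L\delta$ and $L Y=L\delta^{-1}$, so $L\delta^{\pm 1}\subseteq L U_f$; multiplying, $L\delta^{n}\subseteq L U_f$ for every $n\in\Z$, whence $L U_f=\bigoplus_{n\in\Z}L\delta^{n}=L\#\mathscr{M}$, and Proposition~\ref{prop: Gamma Ring Alt Conditions}(2) applies. There is no real obstacle in this proof; the only point requiring a word of care is the observation that $f(x)/x\neq 0$ in $L$ so that the supports of $X$ and $Y$ are exactly $\{\delta\}$ and $\{\delta^{-1}\}$ — this is where the standing hypothesis on $f$ enters, and everything else is bookkeeping.
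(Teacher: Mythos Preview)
Your proof is correct and takes essentially the same approach as the paper: both invoke Proposition~\ref{prop: Gamma Ring Alt Conditions} with the generating set $\{X,Y\}$, your version simply spelling out the supports explicitly. One small aside: for the support computation you only need $f\neq 0$, not the full hypothesis $f(0)\neq 0$; the latter is used elsewhere.
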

\begin{proof}
This immediately follows from Proposition \ref{prop: Gamma Ring Alt Conditions}
letting $\mathscr{X}=\{X,Y\}$.
\end{proof}

In order to apply Theorem \ref{thm: Order if centralizer is a nice localization for arbitrary G},
we need to describe $C_{U_f}$. The next three lemmas are used to do just that.

\begin{lemma}\label{lem: k=0,-1}
For any $f(x)$ such that $f(0)\neq0$, we have $\dl\frac{1}{x},\frac{1}{x-1}\in C_{U_f}$.
\end{lemma}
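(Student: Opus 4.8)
The plan is to exhibit explicit elements of $U_f$ lying in $C_{U_f} = U_f \cap L$ (using the remark that for a Galois $\Gamma$-ring the centralizer of $\Gamma$ equals $\mathscr{U}\cap K$, and here $K = L = \C(x)$) whose values are $\tfrac{1}{x}$ and $\tfrac{1}{x-1}$. The natural candidates are built from the two generators $X = \delta\,\tfrac{f(x)}{x}$ and $Y = \delta^{-1}$: forming products $YX$ and $XY$ collapses the $\delta$-part and leaves a rational function. First I would compute $YX = \delta^{-1}\cdot\delta\,\tfrac{f(x)}{x} = \tfrac{f(x)}{x}$ (the $\delta$'s cancel, no shift applied to the coefficient), so $\tfrac{f(x)}{x}\in C_{U_f}$. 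Similarly $XY = \delta\,\tfrac{f(x)}{x}\cdot\delta^{-1} = \delta\!\left(\tfrac{f(x)}{x}\right) = \tfrac{f(x-1)}{x-1}\in C_{U_f}$, using the skew-monoid multiplication rule $a\mu_1\cdot b\mu_2 = a\,\mu_1(b)\,\mu_1\mu_2$.

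Next I would leverage that $C_{U_f}$ is a ring containing $\Lambda = \C[x]$, hence it is closed under multiplication by polynomials and under addition. The idea is to split off the polynomial part of $\tfrac{f(x)}{x}$: since $f(0)\neq 0$, write $f(x) = f(0) + x\,g(x)$ for some $g(x)\in\C[x]$, so $\tfrac{f(x)}{x} = \tfrac{f(0)}{x} + g(x)$. Because $g(x)\in\Lambda\subseteq C_{U_f}$ and $\tfrac{f(x)}{x}\in C_{U_f}$, subtracting gives $\tfrac{f(0)}{x}\in C_{U_f}$, and dividing by the nonzero scalar $f(0)$ yields $\tfrac{1}{x}\in C_{U_f}$. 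The same manipulation applied to $\tfrac{f(x-1)}{x-1} = XY$, writing $f(x-1) = f(0) + (x-1)\,\tilde g(x)$ with $\tilde g(x) = g(x-1)\in\C[x]$, produces $\tfrac{f(0)}{x-1}\in C_{U_f}$ and hence $\tfrac{1}{x-1}\in C_{U_f}$.

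I do not expect any real obstacle here; the only points requiring slight care are the correct bookkeeping of which side the automorphism acts on in the skew-monoid product (so that $YX$ gives $\tfrac{f(x)}{x}$ rather than a shifted version, and $XY$ gives the shift $\tfrac{f(x-1)}{x-1}$), and the observation that $C_{U_f}$, being a subring of $U_f$ that commutes with all of $\Lambda$, indeed contains $\Lambda$ itself so that subtracting polynomial parts stays inside $C_{U_f}$. Both are immediate from the definitions, so the lemma follows by the two short computations above.
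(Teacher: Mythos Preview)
Your proof is correct and follows essentially the same approach as the paper: both arguments use $YX=\tfrac{f(x)}{x}$ and $XY=\tfrac{f(x-1)}{x-1}$ as elements of $C_{U_f}$, then subtract off the polynomial part and divide by the nonzero constant term $f(0)$. The paper leaves the $YX$, $XY$ computations implicit and phrases the subtraction via the explicit coefficient expansion $f(x)=a_nx^n+\cdots+a_0$, but this is the same ``division algorithm'' manipulation you describe.
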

\begin{proof}
First, we show that $\dl\frac{1}{x}\in C_{U_f}$. Now, $f(x)=a_nx^n+\cdots+a_1x+a_0$ with $a_0\neq0$
by assumption. As such:
\begin{align*}
\frac{f(x)}{x}&=a_nx^{n-1}+a_{n-1}x^{n-2}+\cdots a_1 +\frac{a_0}{x}\\
&\Rightarrow\frac{1}{x}=a_0^{-1}\bigg(\frac{f(x)}{x}-(a_nx^{n-1}+a_{n-1}x^{n-2}+\cdots a_1)\bigg).
\end{align*}
This shows that $\dl\frac{1}{x}\in C_{U_f}$. To see that $\dl\frac{1}{x-1}\in C_{U_f}$, we follow a similar
division algorithm argument with $\dl\frac{f(x-1)}{x-1}$.
\end{proof}

\begin{lemma}\label{lem: x+k}
For any $f(x)$ such that $f(0)\neq0$ and $k\geq1$, we have $\dl\frac{1}{x+k}\in C_{U_f}$.
\end{lemma}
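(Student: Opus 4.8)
The plan is to exhibit, for each fixed $k\ge 1$, an element of $U_f$ that happens to lie in $L$ — so that it belongs to $C_{U_f}=U_f\cap L$ by the Remark above — and whose only pole is a simple pole at $x=-k$; since $\C[x]\subseteq C_{U_f}$, subtracting a suitable polynomial from it then isolates a nonzero scalar multiple of $\frac1{x+k}$.

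First I would record two computations in the skew monoid ring $\mathscr{L}=L\#\mathscr{M}$. Writing $Y=\delta^{-1}$ and $X=\delta\frac{f(x)}{x}=\frac{f(x-1)}{x-1}\,\delta$ (the same element, with its coefficient moved to the left of $\delta$), one checks directly that $XY=\frac{f(x-1)}{x-1}$ and, for any $c=c(x)\in L$, that $Y\,c\,X=c(x+1)\cdot\frac{f(x)}{x}$; iterating the second identity gives $Y^{m}\,c\,X^{m}=c(x+m)\prod_{i=0}^{m-1}\frac{f(x+i)}{x+i}$ for all $m\ge 1$.

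Next, set $E:=\sum_{j=1}^{k}\operatorname{ord}_{-j}(f)$, the number of roots of $f$ lying in $\{-1,-2,\dots,-k\}$ counted with multiplicity, put $N:=E+1$, and consider the element
\[
Z:=Y^{k+1}\,(XY)^{N-1}\,X^{k+1}\in U_f ,
\]
which is a monomial in $X$ and $Y$. Because $(XY)^{N-1}=\bigl(\tfrac{f(x-1)}{x-1}\bigr)^{N-1}\in L$, applying the iterated identity with $m=k+1$ evaluates $Z$ as
\[
Z=\bigl(\tfrac{f(x+k)}{x+k}\bigr)^{N}\prod_{i=0}^{k-1}\tfrac{f(x+i)}{x+i}\in L ,
\]
so $Z\in C_{U_f}$. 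Multiplying by $\prod_{i=0}^{k-1}(x+i)\in\C[x]\subseteq C_{U_f}$ yields
\[
c:=\bigl(\tfrac{f(x+k)}{x+k}\bigr)^{N}\prod_{i=0}^{k-1}f(x+i)=\frac{P(x)}{(x+k)^{N}}\in C_{U_f},
\]
where $P(x):=f(x+k)^{N}\prod_{i=0}^{k-1}f(x+i)\in\C[x]$. The crucial point — and the only place the hypothesis $f(0)\ne0$ is used — is that $\operatorname{ord}_{-k}(P)=N\operatorname{ord}_{0}(f)+\sum_{j=1}^{k}\operatorname{ord}_{-j}(f)=0+E=N-1$. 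Hence $P(x)=(x+k)^{N-1}q(x)$ with $q\in\C[x]$ and $q(-k)\ne 0$, so $c=\frac{q(x)}{x+k}$; writing $q(x)=(x+k)\tilde q(x)+q(-k)$ and subtracting $\tilde q\in\C[x]\subseteq C_{U_f}$ from $c$ gives $\frac{q(-k)}{x+k}\in C_{U_f}$, and therefore $\frac1{x+k}\in C_{U_f}$.

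The main obstacle to anticipate is exactly that $f$ may vanish at one or more of $-1,\dots,-k$, which could cancel the pole at $x=-k$ that we are trying to manufacture; the twist by $(XY)^{N-1}$ with $N>E$ is designed precisely to defeat this, since raising $\tfrac{f(x+k)}{x+k}$ to a sufficiently high power forces the order of the pole at $x=-k$ to exceed the fixed, finite order $E$ of vanishing of the numerator there. Everything else is routine: verifying the two skew-monoid-ring identities and bookkeeping of orders of vanishing — and, incidentally, the same computation with $k=0$ reproves $\tfrac1x\in C_{U_f}$ from Lemma~\ref{lem: k=0,-1}.
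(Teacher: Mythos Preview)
Your proof is correct and follows essentially the same approach as the paper: the paper's $m$ (the order of $x+k$ in $\prod_{j=0}^{k-1}f(x+j)$) is exactly your $E$, so your element $Z=Y^{k+1}(XY)^{N-1}X^{k+1}$ with $N-1=E$ coincides with the paper's $Y^{k+1}(XY)^{m}X^{k+1}$, and both finish by multiplying through by $\prod_{i=0}^{k-1}(x+i)$ and applying the division algorithm. Your write-up is somewhat more explicit than the paper's about the skew-monoid-ring identities, the role of the hypothesis $f(0)\ne 0$, and the final isolation of $\tfrac{1}{x+k}$.
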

\begin{proof}
Let $m$ be the order of $(x+k)$ in $\dl\prod_{j=0}^{k-1}f(x+j)$. Then consider the following:
\begin{align*}
Y^{k+1}(XY)^{m}X^{k+1}&=\delta^{-k-1}\bigg(\frac{f(x-1)}{x-1}\bigg)^{m}\delta^{k+1}\prod_{j=0}^{k}\frac{f(x+j)}{x+j}\\
&=\bigg(\frac{f(x+k)}{x+k}\bigg)^{m}\prod_{j=0}^k\frac{f(x+j)}{x+j}\\
&=\bigg(\frac{f(x+j)}{x+j}\bigg)^{m+1}\prod_{j=0}^{k-1}\frac{f(x+j)}{x+j}
\end{align*}
As such, there are $m$ factors of $(x+k)$ in the numerator and $m+1$ factors in the
denominator. Thus, multiplying by $\dl\prod_{j=0}^{k-1}(x+j)$ and using
a division algorithm argument, it follows that $\dl\frac{1}{x+k}\in C_{U_f}$.
\end{proof}

\begin{lemma}\label{lem: x-k}
For any $f(x)$ such that $f(0)\neq0$ and $k\geq2$, we have $\dl\frac{1}{x-k}\in C_{U_f}$.
\end{lemma}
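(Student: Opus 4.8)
The plan is to mimic the proof of Lemma \ref{lem: x+k}, pushing in the negative direction using products of the generator $Y$ and the element $XY = \delta\frac{f(x)}{x}\cdot\delta^{-1} = \frac{f(x-1)}{x-1}$ (or rather the appropriate conjugate), so that we accumulate a controlled excess of factors of $(x-k)$ in a denominator relative to the numerator, and then extract $\frac{1}{x-k}$ via the division-algorithm trick already used twice above. Concretely, I would form an expression of the shape $X^{k-1}(XY)^m Y^{k-1}$, or a minor variant thereof, and compute its coefficient in $Le$: conjugating $\frac{f(x)}{x}$-type factors by powers of $\delta$ shifts the argument, so $X^{k-1}$ contributes $\prod_{j=0}^{k-2}\frac{f(x-j)}{x-j}$ after moving all the $\delta$'s to one side, while the central block $(XY)^m$ contributes $\left(\frac{f(x-1)}{x-1}\right)^m$ and $Y^{k-1}$ contributes only a shift. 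Matching exponents so that $(x-k)$ appears one more time downstairs than upstairs, multiplying through by the polynomial $\prod(x-j)$ that clears the remaining denominators, and invoking the fact that $f(0)\neq 0$ forces $f(x-k)$ to have nonzero constant term at $x=k$, one isolates $\frac{1}{x-k}$ as an element of $C_{U_f}$.

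The key steps, in order, are: (1) observe that $XY$ and $YX$ are elements of $Le$ (lie in the coefficient ring $L$ with trivial $\mathscr{M}$-part) and write down their explicit rational-function form; (2) compute $X^{a}(XY)^{m}Y^{b}$ for suitable $a,b$ with $a=b$ so the total $\delta$-degree is zero, obtaining a product $\prod_{j}\frac{f(x-j)}{x-j}$ times a power of $\frac{f(x-1)}{x-1}$; (3) count the order of vanishing of numerator and denominator at $x=k$ and choose $m$ so the denominator wins by exactly one; (4) clear the finitely many other poles by multiplying by an explicit polynomial in $C_{U_f}$, then run the division algorithm against $f(x-k)$ (whose value at $x=k$ is $f(0)\cdot(\text{unit})\neq 0$, or more carefully, whose nonvanishing at the relevant point is guaranteed) to solve for $\frac{1}{x-k}$.

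The main obstacle I anticipate is bookkeeping: getting the exponents $a$, $b$, and $m$ exactly right so that the net effect is precisely one surplus factor of $(x-k)$ in the denominator and nothing worse, and making sure that no cancellation from the $f$-factors accidentally kills the pole at $x=k$ (which is why the hypothesis $f(0)\neq 0$ is essential — it guarantees $\frac{f(x)}{x}$ genuinely has a simple pole at $0$, and after shifting, that $\frac{f(x-k)}{x-k}$ genuinely has a simple pole at $x=k$). A secondary subtlety is that for $k\geq 2$ the shifts $x\mapsto x-j$ for $0\le j\le k-1$ pass through several integer points, so one must track all the poles that arise and confirm they can each be cleared by an element already known to lie in $C_{U_f}$ — but this is exactly what Lemmas \ref{lem: k=0,-1} and \ref{lem: x+k} (together with induction on $k$) provide. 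Once the arithmetic is pinned down, the extraction of $\frac{1}{x-k}$ is the same routine division-algorithm manipulation as in the two preceding lemmas, so I would not belabor it.
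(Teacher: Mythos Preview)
Your proposal is correct and follows essentially the same route as the paper: the paper forms $X^{k}(YX)^{m}Y^{k}$ with $m$ equal to the order of $(x-k)$ in $\prod_{j=1}^{k-1}f(x-j)$, simplifies it to $\big(\tfrac{f(x-k)}{x-k}\big)^{m+1}\prod_{\ell=1}^{k-1}\tfrac{f(x-\ell)}{x-\ell}$, then multiplies by the polynomial $\prod_{j=1}^{k-1}(x-j)$ and applies the division algorithm---exactly your steps (1)--(4). Note that your word $X^{k-1}(XY)^{m+1}Y^{k-1}$ and the paper's $X^{k}(YX)^{m}Y^{k}$ are literally the same element (since $Y(XY)^{m}=(YX)^{m}Y$), so the only discrepancy is cosmetic; also, you do not need the earlier lemmas or induction to clear the extraneous poles at $x=1,\dots,k-1$, since multiplying by the polynomial $\prod_{j=1}^{k-1}(x-j)\in\Lambda\subset C_{U_f}$ already suffices.
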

\begin{proof}
Let $m$ be the order of $(x-k)$ in $\dl\prod_{j=1}^{k-1}f(x-j)$. Then consider
the following:
\begin{align*}
X^{k}(YX)^{m}Y^{k}&=\delta^{k}\prod_{j=0}^{k-1}\frac{f(x+j)}{x+j}\bigg(\frac{f(x)}{x}\bigg)^{m}\delta^{-k}\\
&=\prod_{j=0}^{k-1}\frac{f(x+j-k)}{x+j-k}\bigg(\frac{f(x-k)}{x-k}\bigg)^{m}\\
&=\bigg(\frac{f(x-k)}{x-k}\bigg)^{m+1}\prod_{\ell=1}^{k-1}\frac{f(x-\ell)}{x-\ell}.
\end{align*}
As such, there are $m$ factors of $(x-k)$ in the numerator and $m+1$ factors in the
denominator. Thus, multiplying by $\dl\prod_{j=1}^{k-1}(x-j)$ and using a
division algorithm argument, it follows that $\dl\frac{1}{x-k}\in C_{U_f}$.
\end{proof}
\begin{proposition}\label{prop: description of centralizer in toy example}
If $f(x)$ is a polynomial such that $f(0)\neq0$, then
$C_{U_f}=\C[x]\bigg[\frac{1}{x+k}~\bigg\vert~k\in\Z\bigg]$.
\end{proposition}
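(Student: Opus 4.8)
The plan is to establish the two inclusions $C_{U_f}\subseteq R$ and $R\subseteq C_{U_f}$, where I write $R=\C[x]\big[\frac{1}{x+k}\mid k\in\Z\big]$; the second is essentially already contained in Lemmas~\ref{lem: k=0,-1}, \ref{lem: x+k}, and \ref{lem: x-k}. Since $G$ is trivial we have $\Gamma=\Lambda=\C[x]$ and $K=\Frac(\Gamma)=\C(x)=L$, so by the Remark recalling $C_{\mathscr{U}}(\Gamma)=\mathscr{U}\cap K$ for a Galois $\Gamma$-ring $\mathscr{U}$ we may identify $C_{U_f}=U_f\cap\C(x)$; in other words $C_{U_f}$ consists exactly of the elements of $U_f$ supported on the identity $e\in\mathscr{M}$.

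For the inclusion $C_{U_f}\subseteq R$, let $S\subseteq\C[x]$ be the multiplicative set generated by $\{x+k\mid k\in\Z\}$, so $R=S^{-1}\C[x]$. The key observation is that $\delta$ permutes $S$, since $\delta(x+k)=x+(k-1)$; hence $\delta^n(R)=R$ for every $n\in\Z$, and therefore $\mathscr{L}_R:=\bigoplus_{n\in\Z}R\,\delta^n$ is a \emph{subring} of $L\#\mathscr{M}$ — it is closed under addition by construction, and $(a\delta^m)(b\delta^n)=\big(a\,\delta^m(b)\big)\delta^{m+n}$ with $a\,\delta^m(b)\in R\cdot R=R$. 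This subring contains $\C[x]$, it contains $Y=\delta^{-1}$, and it contains $X=\delta\,\frac{f(x)}{x}=\frac{f(x-1)}{x-1}\,\delta$ because $\frac{1}{x-1}\in R$; consequently $U_f=\C\langle\C[x],X,Y\rangle\subseteq\mathscr{L}_R$, and intersecting with $\C(x)$ gives $C_{U_f}\subseteq\mathscr{L}_R\cap\C(x)=R$.

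For the reverse inclusion $R\subseteq C_{U_f}$, the inclusion $\C[x]=\Gamma\subseteq C_{U_f}$ is immediate. Lemma~\ref{lem: k=0,-1} gives $\frac{1}{x},\frac{1}{x-1}\in C_{U_f}$, Lemma~\ref{lem: x+k} gives $\frac{1}{x+k}\in C_{U_f}$ for all $k\geq1$, and Lemma~\ref{lem: x-k} gives $\frac{1}{x-k}\in C_{U_f}$ for all $k\geq2$; together these produce $\frac{1}{x+k}$ for every $k\in\Z$, and since $C_{U_f}$ is a ring it contains the subring of $\C(x)$ generated by $\C[x]$ and all the $\frac{1}{x+k}$, which is $R$. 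Combining the two inclusions yields $C_{U_f}=R$.

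I do not expect a genuine obstacle here: the arithmetic has been isolated in the three lemmas above. The only step needing care is the inclusion $C_{U_f}\subseteq R$, namely verifying that products and $\C$-linear combinations of words in $\C[x]$, $X$, and $Y$ cannot introduce denominators other than products of factors $x+k$ — and this is handled cleanly by the subring $\mathscr{L}_R$ once one notes that $\delta$ stabilizes the multiplicative set $S$.
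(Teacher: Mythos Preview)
Your proof is correct. Both inclusions are handled properly, and the $\supseteq$ direction is identical to the paper's (invoking the three preparatory lemmas together with $\C[x]\subseteq C_{U_f}$).

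For the inclusion $C_{U_f}\subseteq R$ you take a somewhat different route from the paper. The paper argues directly: an element $Z\in C_{U_f}$ has ``degree $0$'' in $\delta$ and is therefore written as a $\C[x]$-combination of products $(X^nY^n)^{k_{-n}}(Y^nX^n)^{k_n}$, each of which expands to a product of factors $\frac{f(x+\ell)}{x+\ell}$, visibly landing in $R$. Your argument instead observes that the multiplicative set $S=\langle x+k\mid k\in\Z\rangle$ is $\delta$-stable, so $\mathscr{L}_R=\bigoplus_{n\in\Z}R\,\delta^n$ is a subring of $L\#\mathscr{M}$ containing the generators $\C[x],X,Y$, whence $U_f\subseteq\mathscr{L}_R$ and $C_{U_f}=U_f\cap\C(x)\subseteq R$. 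The two arguments express the same underlying fact---products of $\C[x]$, $X$, $Y$ only produce coefficients in $R$---but yours packages it more cleanly: it sidesteps the need to justify that every degree-$0$ element of $U_f$ can be written in the specific form $\sum g_k(x)\prod(X^nY^n)^{k_{-n}}(Y^nX^n)^{k_n}$ displayed in the paper, a step the paper does not spell out. In effect your argument is the same idea phrased at the level of the ring of coefficients $D_{U_f}\subseteq R$ (cf.\ Definition~\ref{def: ring of coefficients}).
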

\begin{proof}
$C_{U_f}\supseteq\C[x]\bigg[\frac{1}{x+k}~\bigg\vert~k\in\Z\bigg]$ by
Lemmas \ref{lem: k=0,-1}, \ref{lem: x+k}, and \ref{lem: x-k}.
To show the reverse inclusion, we observe that for $Z\in C_{U_f}$, $Z$ must be
of "degree 0" with regards to $\delta$ that is:
\begin{align*}
Z&=\sum_{k=1}^m g_k(x) \prod_{n=0}^{\infty}(X^nY^n)^{k_{-n}}(Y^nX^n)^{k_n}\\
&=\sum_{k=1}^m g_k(x)\prod_{\ell=-\infty}^{\infty}
\left(\frac{f(x+\ell)}{(x+\ell)}\right)^{k_\ell}\\
&=\sum_{k=1}^m G_k(x)\prod_{\ell=-\infty}^{\infty}\frac{1}{(x+\ell)^{k_\ell}}\in\C[x]\bigg[\frac{1}{x+k}~\bigg\vert~k\in\Z\bigg],
\end{align*}
where $k_\ell\neq0$ for at most finitely many terms. Thus
$C_{U_f}\subseteq\C[x]\bigg[\frac{1}{x+k}~\bigg\vert~k\in\Z\bigg]$.
\end{proof}

We can now prove that $U_f$ is a Galois $C_{U_f}$-order using
Theorem \ref{thm: Order if centralizer is a nice localization for arbitrary G}.

\begin{corollary}\label{cor: toy example is an order}
The algebra $U_f$ is a principal and co-principal Galois $C_{U_f}$-order
in $L\#\mathscr{M}$.
\end{corollary}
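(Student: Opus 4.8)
The plan is to apply Theorem \ref{thm: Order if centralizer is a nice localization for arbitrary G} with $G$ the trivial group, $\Gamma=\Lambda=\C[x]$, and $S$ the multiplicative subset of $\C[x]$ generated by $\{x+k\mid k\in\Z\}$. We have already seen that $U_f$ is a Galois $\Gamma$-ring in $L\#\mathscr{M}$, and by Proposition \ref{prop: description of centralizer in toy example} we know $C_{U_f}=\C[x]\big[\tfrac{1}{x+k}\mid k\in\Z\big]=S^{-1}\Lambda$, which realizes $C_{U_f}$ as the (trivial-$G$-)invariants of a localization of $\Lambda$. So the remaining hypotheses to check are that $S$ is $\mathscr{M}$-invariant and that $D_{U_f}$ (and $D_{U_f}^{\mathrm{op}}$) is a finitely generated $C_{U_f}$-module, in fact contained in $S^{-1}\Lambda$.

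The $\mathscr{M}$-invariance of $S$ is immediate: $\delta(x+k)=x+k-1=x+(k-1)$, so $\delta$, and hence every element of $\mathscr{M}=\langle\delta\rangle$, permutes the generators of $S$ and stabilizes $S$; since $G=1$ there is nothing more to verify. The substantive step is to pin down $D_{U_f}$, and I claim $D_{U_f}=C_{U_f}$. One inclusion is formal: any $c\in C_{U_f}=U_f\cap L$ occurs as the left coefficient of the identity $e\in\mathscr{M}$ in $c=c\cdot e\in U_f$, so $C_{U_f}\subseteq D_{U_f}$. For the reverse, write an arbitrary element of $U_f$ as a $\C$-linear combination of words in $X=\delta\,f(x)/x$, $Y=\delta^{-1}$, and polynomials of $\C[x]$, and move every power of $\delta^{\pm1}$ to the left into normal form $\sum_{\mu\in\mathscr{M}}c_\mu\mu$. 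Passing a $\delta^{\pm1}$ past a polynomial only shifts its argument, and each occurrence of $X$ contributes exactly one factor $f(x+j)/(x+j)$ with $j\in\Z$; hence every coefficient $c_\mu$ is a polynomial in shifted variables divided by a product of linear factors $(x+j)$, $j\in\Z$, i.e. $c_\mu\in S^{-1}\C[x]=C_{U_f}$. Since $C_{U_f}$ is a ring, the subring $D_{U_f}\subseteq L$ generated by all such $c_\mu$ is contained in $C_{U_f}$, giving $D_{U_f}=C_{U_f}$, a cyclic (so finitely generated) module over $C_{U_f}$, trivially contained in $S^{-1}\Lambda$. The identical bookkeeping, collecting powers of $\delta^{\pm1}$ to the right instead, shows $D_{U_f}^{\mathrm{op}}=C_{U_f}\subseteq S^{-1}\Lambda$.

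All hypotheses of Theorem \ref{thm: Order if centralizer is a nice localization for arbitrary G} now hold, so $U_f$ is a Galois $C_{U_f}$-order in $L\#\mathscr{M}$; and since $D_{U_f}\subseteq S^{-1}\Lambda$ and $D_{U_f}^{\mathrm{op}}\subseteq S^{-1}\Lambda$, the ``moreover'' clause of that theorem yields that $U_f$ is both principal and co-principal. The only point demanding genuine care is the inclusion $D_{U_f}\subseteq C_{U_f}$, namely confirming that once an arbitrary product of generators is put into normal form no denominators beyond the linear factors $x+j$ can appear; this rests on the observation that $X$ and $Y$ each introduce only a single integer shift together with a single factor $f(x+j)/(x+j)$, which keeps all denominators inside $S$.
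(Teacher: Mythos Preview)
Your proof is correct and follows essentially the same approach as the paper: apply Theorem~\ref{thm: Order if centralizer is a nice localization for arbitrary G} using Proposition~\ref{prop: description of centralizer in toy example} to identify $C_{U_f}$ as $S^{-1}\Lambda$ for the $\mathscr{M}$-invariant set $S$, then verify $D_{U_f},D_{U_f}^{\mathrm{op}}\subseteq S^{-1}\Lambda$ from the shape of the generators. The paper's proof is terser---it simply asserts the latter inclusion is ``clear since $U_f$ is generated by $X$, $Y$, and $\Lambda$''---whereas you spell out the normal-form bookkeeping and additionally note $D_{U_f}=C_{U_f}$ to handle the finite-generation hypothesis explicitly, but the underlying argument is the same.
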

\begin{proof}
Proposition \ref{prop: description of centralizer in toy example} gives us that
the main supposition of Theorem \ref{thm: Order if centralizer is a nice localization for arbitrary G}.
All that remains to show is $D_{U_f},D_{U_f}^{\rm op}\subset S^{-1}\Lambda=C_{U_f}$ in this case.
However, this is clear since $U_f$ is generated by $X$, $Y$, and $\Lambda$.
\end{proof}

\subsection{Localizing \texorpdfstring{$\Anlg{\gl_n}$}{A(gln)}}\label{subsec: localizing A(gln)}

In this subsection, we construct a localization of $\Anlg{\gl_n}$ denoted
$\tAnlg{\gl_n}$, to which Theorem \ref{thm: Order if centralizer is a nice localization for arbitrary G}.
can be applied.

In order to construct this localization, we describe shifted Vandermonde
polynomials using the following notation:

\begin{notation}
Let $\mathcal{V}_k$ be the Vandermonde in the $x_{ki}$ variables. Let
$l:=(l_1,l_2,\ldots,l_{k-1})\in\Z^{k-1}$. We denote the ($l$-)shifted $\mathcal{V}_k$ as follows:
\[
\mathcal{V}_{k,l}:=\prod_{i<j}(x_{ki}-x_{kj}+\sum_{n=i}^{j-1}l_n).
\]
This notation makes sense because for $i<j$:
\[
x_{ki}-x_{kj}=(x_{ki}-x_{k,i+1})+(x_{k,i+1}-x_{k,i+2})+\cdots+(x_{k,j-1}-x_{kj}).
\]
Therefore, any shift of $\mathcal{V}_k$ is uniquely determined by the shifts of $x_{ki}-x_{k,i+1}$ for
$i=1,2,\ldots,k-1$.
\end{notation}

Now to construct our localization.

\begin{definition}\label{def: localization set for A(gln)}
Let $S:=\langle\mathcal{V}_{k,l}\mid l\in\Z^{k-1};~ k=2,\ldots,n-1\rangle_{\rm monoid}$.
We observe that $S$ is a multiplicatively closed set in $\Lambda$, and
$\Anlg{\gl_n}\subset (S^{-1}\Lambda\#\mathscr{M})^{\mathbb{A}_n}$. We also note
that $S$ is the smallest $\mathscr{M}$-invariant multiplicatively closed set
that contains $\mathcal{V}_2,\ldots,\mathcal{V}_{n-1}$.
\end{definition}

As Example \ref{ex: Gammatilde is not maximal comm} demonstrates,
$C_{\Anlg{\gl_n}}(\widetilde{\Gamma})\subset(S^{-1}\Lambda)^{\mathbb{A}_n}$.
It is not known if this containment is strict, so this motivates the construction
of the following localization of $\Anlg{\gl_n}$.

\begin{definition}\label{def: localized A(gln)}
Our new algebra of interest in $\widetilde{\mathscr{K}}$ is
$\tAnlg{\gl_n}:=\C\langle U_n,(S^{-1}\Lambda)^{\mathbb{A}_n}\rangle_{\rm alg}$.
Notice this coincides with the definitions of $\Anlg{\gl_2}$ for $n=2$.
\end{definition}

\begin{remark}
It follows from Lemma 2.10 in \cite{Hartwig20} that $\tAnlg{\gl_n}$
is a Galois $\widetilde{\Gamma}$-ring since it contains $\Anlg{\gl_n}$.
Moreover, $C_{\tAnlg{\gl_n}}(\widetilde{\Gamma})=(S^{-1}\Lambda)^{\mathbb{A}_n}$
as well.
\end{remark}

\begin{remark}
In $\tAnlg{\gl_n}$, relation \ref{rel: weird relation in A(gl3)} from
Section \ref{subsec: partial presentation} can be rewritten either as
\begin{itemize}
\item[\ref{rel: weird relation in A(gl3)}$^\prime$]
$\displaystyle [A_{21}^\pm,A_{22}^\pm]=\frac{\pm2}{\mathcal{V}_2\pm1}A_{21}^\pm A_{22}^\pm$, or\\
\item[\ref{rel: weird relation in A(gl3)}$^{\prime\prime}$] $\displaystyle A_{22}^\pm A_{21}^\pm
=\frac{\mathcal{V}_2\mp1}{\mathcal{V}_2\pm1}A_{21}^\pm A_{22}^\pm$.
\end{itemize}
\end{remark}

\begin{corollary}\label{cor: Localized A(gln) is an order}
The subalgebra $\tAnlg{\gl_n}\subset\widetilde{K}$ is both a principal and co-principal
Galois $(S^{-1}\Lambda)^{\mathbb{A}_n}$-order.
\end{corollary}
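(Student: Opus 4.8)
The plan is to apply Theorem~\ref{thm: Order if centralizer is a nice localization for arbitrary G} with $G=\mathbb{A}_n$, $\mathscr{U}=\tAnlg{\gl_n}$, and $S$ the multiplicatively closed set of Definition~\ref{def: localization set for A(gln)}. By the remarks following Definition~\ref{def: localized A(gln)}, $\tAnlg{\gl_n}$ is a Galois $\widetilde{\Gamma}$-ring and $C_{\tAnlg{\gl_n}}(\widetilde{\Gamma})=(S^{-1}\Lambda)^{\mathbb{A}_n}=:C$, while $S$ is $\mathscr{M}$-invariant by construction. So the outstanding hypotheses of the theorem are that the ring of coefficients $D_{\tAnlg{\gl_n}}$ is a finitely generated $C$-module and that $D_{\tAnlg{\gl_n}}$ and $D_{\tAnlg{\gl_n}}^{\mathrm{op}}$ are contained in $S^{-1}\Lambda$; the last two inclusions are exactly what upgrade the conclusion to principal and co-principal.

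To see $D_{\tAnlg{\gl_n}}\subseteq S^{-1}\Lambda$, I would argue from the generating set $U_n\cup(S^{-1}\Lambda)^{\mathbb{A}_n}$ of Definition~\ref{def: localized A(gln)}. The coefficient of $X_{kk}$ is a polynomial, the elements of $(S^{-1}\Lambda)^{\mathbb{A}_n}$ are coefficients of the unit $e\in\mathscr{M}$ and already lie in $S^{-1}\Lambda$, and the coefficients $a_{ki}^\pm$ of $X_k^\pm$ have denominator $\prod_{j\neq i}(x_{kj}-x_{ki})$, which for $k\leq n-1$ divides $\mathcal{V}_k\in S$ up to sign (and is the empty product for $k=1$); hence every $a_{ki}^\pm\in S^{-1}\Lambda$. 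Since $S$ is $\mathscr{M}$-invariant, $\mu(S^{-1}\Lambda)=S^{-1}\Lambda$ for all $\mu\in\mathscr{M}$, so the skew-ring multiplication $a_1\mu_1\cdot a_2\mu_2=\bigl(a_1\,\mu_1(a_2)\bigr)(\mu_1\mu_2)$ never pushes a left coefficient out of $S^{-1}\Lambda$; closing under sums gives $D_{\tAnlg{\gl_n}}\subseteq S^{-1}\Lambda$. Rewriting $a\mu=\mu\,\mu^{-1}(a)$ and repeating with right coefficients gives $D_{\tAnlg{\gl_n}}^{\mathrm{op}}\subseteq S^{-1}\Lambda$ as well.

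Finite generation over $C$ should then be formal: assumption (A3) says $\Lambda$ is a finite $\widetilde{\Gamma}=\Lambda^{\mathbb{A}_n}$-module, and localizing at the $\mathbb{A}_n$-stable set $S$ (so that $C=(S^{\mathbb{A}_n})^{-1}\widetilde{\Gamma}$ and $S^{-1}\Lambda=(S^{\mathbb{A}_n})^{-1}\Lambda$, exactly as in the proof of Theorem~\ref{thm: Order if centralizer is a nice localization for arbitrary G}) preserves this, so $S^{-1}\Lambda$ is a finite $C$-module; as $C$ is a localization of the Noetherian ring $\widetilde{\Gamma}$ it is Noetherian, whence the submodule $D_{\tAnlg{\gl_n}}\subseteq S^{-1}\Lambda$ is finitely generated over $C$. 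With every hypothesis checked, Theorem~\ref{thm: Order if centralizer is a nice localization for arbitrary G} yields that $\tAnlg{\gl_n}$ is a principal and co-principal Galois $(S^{-1}\Lambda)^{\mathbb{A}_n}$-order. The one place demanding care is the coefficient bookkeeping of the middle paragraph: one must confirm that $S$ — the smallest $\mathscr{M}$-invariant multiplicative set containing $\mathcal{V}_2,\ldots,\mathcal{V}_{n-1}$ — really absorbs every denominator produced by iterated products of the $X_k^\pm$, including all $\mathscr{M}$-translates of the factors $(x_{kj}-x_{ki})$, which is precisely why $S$ is taken to contain all shifted Vandermondes $\mathcal{V}_{k,l}$ and not just the $\mathcal{V}_k$.
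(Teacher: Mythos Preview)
Your proposal is correct and follows exactly the same route as the paper: verify the hypotheses of Theorem~\ref{thm: Order if centralizer is a nice localization for arbitrary G} and apply it. The paper's own proof is terse---it simply asserts that the main supposition holds by construction and that $D_{\tAnlg{\gl_n}},D_{\tAnlg{\gl_n}}^{\mathrm{op}}\subseteq S^{-1}\Lambda$ follows from the form of the $a_{ki}^\pm$---whereas you spell out the coefficient bookkeeping (denominators dividing $\mathcal{V}_k$, closure under the skew multiplication via $\mathscr{M}$-invariance of $S$) and the finite-generation step explicitly; but the underlying argument is identical.
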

\begin{proof}
It is clear by construction that $\tAnlg{\gl_n}$ satisfies the main supposition
of Theorem \ref{thm: Order if centralizer is a nice localization for arbitrary G}.
Also, it follows from the definition of the $a_{ki}^\pm$'s in 
(\ref{eq: defintion of the rational functions a}) that
$D_{\tAnlg{\gl_n}},D_{\tAnlg{\gl_n}}^{\rm op}\subseteq S^{-1}\Lambda$. We can therefore apply
Theorem \ref{thm: Order if centralizer is a nice localization for arbitrary G}.
\end{proof}

In \cite{Webster19}, it was shown that every (co-)principal Galois order has a corresponding (co-)principal
flag order. This leads us to the following:

\begin{openprob}
What is the corresponding (co-)principal flag order of $\tAnlg{\gl_n}$?
\end{openprob}

%%%%%%%%%%%%%%%%%%%%%%%%%%%%%%%%%%%%%%%%
%            End of Section
%%%%%%%%%%%%%%%%%%%%%%%%%%%%%%%%%%%%%%%%

\section{(Generic) Gelfand-Tsetlin Modules over \texorpdfstring{$\Anlg{\gl_n}$}{A(gln)}}\label{sec: GT Modules over A(gln)}

\subsection{Some general results}

Following the techniques in \cite{early_mazorchuk_vishnyakova_2018} and \cite{Hartwig20},
we construct canonical simple Gelfand-Tsetlin modules over $\tAnlg{\gl_n}$. We need the
following additional assumptions for these next two results:
\[
\begin{array}{lcl}
{\rm (A4)}& &\text{$\Lambda$ is finitely generated over an algebraically closed field $\mathbbm{k}$ of characteristic 0},\\
{\rm (A5)}& &\text{$G$ and $M$ act by $\mathbbm{k}$-algebra homomorphisms on $\Lambda$}.
\end{array}
\]
Let $\hat{\Gamma}$ be the set of all $\Gamma$-characters (i.e., $\mathbbm{k}$-algebra homomorphisms
$\xi\colon\Gamma\rightarrow\mathbbm{k}$).

\begin{definition}
Let $\mathscr{U}$ be a Galois $\Gamma$-ring in $\mathscr{K}$. A left
$\mathscr{U}$-modules $V$ is said to be a \emph{Gelfand-Tsetlin module (with
respect to $\Gamma$)} if $\Gamma$ acts locally finitely on $V$. Equivalently:
\[
V=\bigoplus_{\xi\in\hat{\Gamma}}V_\xi,\qquad V_\xi=\{v\in V\mid(\ker\xi)^Nv=0,N\gg0\}.
\]
Similarly, one can define a right Gelfand-Tsetlin modules.
\end{definition}

The details for the following lemma can be found in \cite{DFO94}.

\begin{lemma}
Let $\mathscr{U}$ be a Galois $\Gamma$-ring in $\mathscr{K}$.
\begin{enumerate}[\rm (i)]
\item Any submodule and any quotient of a Gelfand-Tsetlin module is a
Gelfand-Tsetlin module.
\item Any $\mathscr{U}$-module generated by generalized weight vectors is a
Gelfand-Tsetlin module.
\end{enumerate}
\end{lemma}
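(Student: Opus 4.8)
The plan is to prove both parts by unwinding definitions, since the content here is essentially the standard linear-algebra observations that make the notion of a Gelfand-Tsetlin module robust; nothing about the Galois ring structure is used beyond the fact that $\mathscr{U}$ is a $\Gamma$-algebra with $\Gamma$ commutative acting on $V$.

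For part (i), I would argue as follows. Let $V$ be a Gelfand-Tsetlin module, so $V=\bigoplus_{\xi\in\hat{\Gamma}}V_\xi$ with $V_\xi=\{v\in V\mid (\ker\xi)^N v=0,\ N\gg0\}$. First suppose $W\subseteq V$ is a $\mathscr{U}$-submodule. Since $W$ is a $\Gamma$-submodule and the generalized eigenspace decomposition of a $\Gamma$-module is preserved under passing to submodules, one has $W=\bigoplus_\xi (W\cap V_\xi)$; concretely, any $w\in W$ decomposes as $w=\sum w_\xi$ with $w_\xi\in V_\xi$, and a standard argument using coprimality of the ideals $(\ker\xi)^N$ for distinct $\xi$ (Chinese Remainder Theorem applied inside the finite-dimensional $\Gamma$-submodule $\Gamma w$) shows each $w_\xi$ lies in $W$. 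Hence $W\cap V_\xi=W_\xi$ and $W$ is Gelfand-Tsetlin. For a quotient $V/W$, apply the right-exactness of the generalized eigenspace functor: the image of $V_\xi$ in $V/W$ lies in $(V/W)_\xi$, and since $V=\sum_\xi V_\xi$ surjects onto $V/W$ we get $V/W=\sum_\xi (V/W)_\xi$, and local finiteness of $\Gamma$ on $V/W$ is immediate because every element is the image of an element on which $\Gamma$ acts locally finitely. This gives $V/W=\bigoplus_\xi (V/W)_\xi$, so $V/W$ is Gelfand-Tsetlin.

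For part (ii), let $V$ be a $\mathscr{U}$-module generated as a $\mathscr{U}$-module by a set $\{v_i\}$ of generalized weight vectors, say $v_i\in V_{\xi_i}$, i.e. $(\ker\xi_i)^{N_i}v_i=0$. The key point is that $\mathscr{U}$ is a finitely generated $\Gamma$-algebra (it is a Galois $\Gamma$-ring, hence finitely generated over $\Gamma$) so it is spanned over $\Gamma$ by monomials in a fixed finite generating set $u_1,\dots,u_k$; I would show by induction on monomial length that for each such monomial $u$, the element $u v_i$ lies in a finite-dimensional $\Gamma$-submodule. The inductive step uses that $\Gamma$ is a Harish-Chandra-type subalgebra only in the weak sense needed here — actually the cleanest route is: for any $u\in\mathscr{U}$ and any generalized weight vector $v$, the cyclic $\Gamma$-module $\Gamma u v$ is finitely generated, because $\Gamma u\Gamma$ is a finitely generated left $\Gamma$-module (this holds since $\mathscr{U}\subseteq\mathscr{K}=\mathscr{L}^G$ and the relevant finiteness assumption (A3), or directly because $\mathscr{U}$ is a Noetherian $\Gamma$-module by Lemma \ref{lem: Hartwig big lemma}) and $\Gamma v$ is finite-dimensional. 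Then $\Gamma u v$ is a finitely generated module over the commutative Noetherian (in fact, here finite-dimensional suffices) quotient of $\Gamma$ acting on the finite-dimensional space spanned by $v$ and its $(\ker\xi)$-translates, hence $uv$ is again a sum of generalized weight vectors. Since $V=\sum_{u}\sum_i \Gamma u v_i$ over monomials $u$, every element of $V$ lies in a finite-dimensional $\Gamma$-submodule, so $\Gamma$ acts locally finitely and $V$ is Gelfand-Tsetlin.

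The main obstacle is making the finiteness in part (ii) rigorous without over-invoking machinery: one must be careful that "generated by generalized weight vectors" as a $\mathscr{U}$-module does give local finiteness of the $\Gamma$-action, which relies on $\mathscr{U}$ being a finitely generated $\Gamma$-module-ish object — precisely, that $\Gamma u\Gamma$ is finitely generated as a left $\Gamma$-module for each $u\in\mathscr{U}$, which is where the Noetherianity from Lemma \ref{lem: Hartwig big lemma} and assumption (A3) enter. I would isolate this as the crux and cite \cite{DFO94} for the detailed verification, since the statement explicitly defers to that reference; the rest is the routine generalized-eigenspace bookkeeping sketched above.
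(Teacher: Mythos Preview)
The paper does not prove this lemma at all; it simply states that the details can be found in \cite{DFO94}. So there is no ``paper's own proof'' to compare against, and your final sentence---deferring to \cite{DFO94}---is in fact exactly what the paper does.

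That said, your sketch for part (i) is correct and standard. In part (ii) your overall strategy (reduce to showing $\Gamma u v$ is finite-dimensional via the Harish-Chandra property that $\Gamma u\Gamma$ is a finitely generated $\Gamma$-module) is the right one, but one of your two justifications for that finiteness is wrong: you write ``directly because $\mathscr{U}$ is a Noetherian $\Gamma$-module by Lemma~\ref{lem: Hartwig big lemma}.'' This is false. Lemma~\ref{lem: Hartwig big lemma} concerns $\Lambda$, not $\mathscr{U}$; a Galois $\Gamma$-ring such as $U(\gl_n)$ is essentially never a finitely generated (let alone Noetherian) $\Gamma$-module. Your other route---using $\mathscr{U}\subseteq(L\#\mathscr{M})^G$ together with (A3) to bound $\Gamma u\Gamma$ inside $\sum_{\mu\in\supp u} a_\mu\Lambda\,\mu$, which is finitely generated over $\Gamma$ since $\Lambda$ is---is the correct one, and Noetherianity of $\Gamma$ (via Eakin--Nagata from Lemma~\ref{lem: Hartwig big lemma}(iii)) then lets you conclude. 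Note also that to get $\Gamma u v\subseteq\sum_k b_k(\Gamma v)$ finite-dimensional you actually use that $\Gamma u\Gamma$ is finitely generated as a \emph{right} $\Gamma$-module, not left; both hold, but the side matters for the argument as written.
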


\begin{theorem}[\cite{Hartwig20}, Theorem 3.3 (ii)]\label{thm: canonical GZ mods for coprincpical Galois Order}
Let $\xi\in\hat{\Gamma}$ be any character. If $\mathscr{U}$ is a co-principal Galois
$\Gamma$-order in $\mathscr{K}$, then the left cyclic $\mathscr{U}$-module
$\mathscr{U}\xi$ has a unique simple quotient $V^\prime(\xi)$. Moreover,
$V^\prime(\xi)$ is a Gelfand-Tsetlin over $\mathscr{U}$ with $V^\prime(\xi)_\xi\neq0$
and is called the canonical simple left Gelfand-Tsetlin $\mathscr{U}$-module
associated to $\xi$.
\end{theorem}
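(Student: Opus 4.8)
## Proof proposal for Theorem (canonical simple left Gelfand-Tsetlin modules)

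The plan is to realize $\mathscr{U}\xi$ concretely as the cyclic left module $\mathscr{U}/\mathscr{U}(\ker\xi)$ with distinguished generator $v_\xi$ (the image of $1$), and then to control its lattice of submodules via the Gelfand-Tsetlin grading. First I would observe that $v_\xi$ is a genuine (not merely generalized) $\xi$-weight vector: for $\gamma\in\Gamma$ we have $\gamma-\xi(\gamma)\cdot 1\in\ker\xi\subseteq\mathscr{U}(\ker\xi)$, so $\gamma\cdot v_\xi=\xi(\gamma)v_\xi$ and hence $v_\xi\in(\mathscr{U}\xi)_\xi$. That $v_\xi\neq 0$, i.e. $\mathscr{U}(\ker\xi)\neq\mathscr{U}$, I would extract from $\mathscr{U}$ being a Galois $\Gamma$-order together with Step~2 below. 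Since $\mathscr{U}\xi$ is then generated by a weight vector, the preceding lemma gives that it is a Gelfand-Tsetlin module, $\mathscr{U}\xi=\bigoplus_{\eta}(\mathscr{U}\xi)_\eta$, and every submodule $N$ inherits the grading: $N=\bigoplus_\eta N_\eta$.

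The heart of the argument, and the step I expect to be the main obstacle, is to show that the weight space $(\mathscr{U}\xi)_\xi$ is one-dimensional, spanned by $v_\xi$. This is exactly where co-principality is used. From $\mathscr{U}\subseteq{}_\Gamma\mathscr{K}$ we get $X^\dagger(\gamma)\in\Gamma$ for all $X\in\mathscr{U}$ and $\gamma\in\Gamma$; specializing to $\gamma=1$ already forces $\mathscr{U}_e=\mathscr{U}\cap Ke=\Gamma$, so the $e$-component of $\mathscr{U}$ acts on $v_\xi$ only through $\xi$. Writing a general $X=\sum_{\mu}\mu\,a_\mu\in\mathscr{U}$, the vector $X v_\xi$ is a sum of pieces of weights $\mu\cdot\xi$, and its $\xi$-component comes only from the terms with $\mu\cdot\xi=\xi$; for those terms the co-evaluation identity $X^\dagger(\gamma)\in\Gamma$, evaluated along the support of $X$ and combined with the separation axiom (A1), pins the relevant coefficient down to something acting as a scalar multiple of $\xi$ on $\Gamma$. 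Hence $X v_\xi$ has $\xi$-component in $\mathbbm{k}v_\xi$, so $(\mathscr{U}\xi)_\xi=\mathbbm{k}v_\xi$. Without co-principality $\mathscr{U}_e$ can be a proper integral extension of $\Gamma$ and this weight space can be strictly larger, which is why I regard this as the delicate point.

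Granting the one-dimensionality, the rest is formal. If $N\subsetneq\mathscr{U}\xi$ is a proper submodule then $v_\xi\notin N$ (else $N=\mathscr{U}v_\xi=\mathscr{U}\xi$), and since $N$ is graded and $(\mathscr{U}\xi)_\xi=\mathbbm{k}v_\xi$ we get $N_\xi=0$. Consequently the sum $\mathcal{N}$ of all proper submodules still satisfies $\mathcal{N}_\xi=0\neq(\mathscr{U}\xi)_\xi$, so $\mathcal{N}$ is itself proper; it is therefore the unique maximal submodule, and $V'(\xi):=\mathscr{U}\xi/\mathcal{N}$ is the unique simple quotient of $\mathscr{U}\xi$. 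Finally, $V'(\xi)$ is a quotient of a Gelfand-Tsetlin module, hence Gelfand-Tsetlin, and $V'(\xi)_\xi=(\mathscr{U}\xi)_\xi/\mathcal{N}_\xi=\mathbbm{k}\,\overline{v_\xi}\neq 0$, which is the remaining assertion. The co-evaluation bookkeeping in the middle paragraph is the only place where real work is needed; everything surrounding it is the standard "cyclic module with a one-dimensional top weight space" argument.
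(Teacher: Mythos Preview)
The paper does not prove this theorem; it is quoted from \cite{Hartwig20}. The only information the paper offers about the proof is the remark that its ``main arguments rest on'' the isomorphism
\[
\Hom{\Gamma}{\Gamma/\mathfrak{m},\Gamma^\ast}\cong\Hom{\mathbbm{k}}{\Gamma/\mathfrak{m}\otimes_\Gamma\Gamma,\mathbbm{k}}\cong\mathbbm{k},
\]
which already signals a different realization of $\mathscr{U}\xi$ than the one you chose. In \cite{Hartwig20} the co-principal hypothesis is used to make $\mathscr{U}$ act on $\Gamma^\ast=\Hom{\mathbbm{k}}{\Gamma,\mathbbm{k}}$ by the transpose of co-evaluation (one checks $(XY)^\dagger=Y^\dagger X^\dagger$), and $\mathscr{U}\xi$ is the cyclic submodule of $\Gamma^\ast$ generated by the character $\xi$. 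The displayed isomorphism then says exactly that the \emph{genuine} $\xi$-weight vectors in $\Gamma^\ast$ form the line $\mathbbm{k}\xi$. Any nonzero Gelfand--Tsetlin submodule $N$ with $N_\xi\neq 0$ therefore contains a genuine $\xi$-weight vector, hence contains $\xi$, hence equals $\mathscr{U}\xi$; contrapositively every proper submodule has $N_\xi=0$, and the rest of your ``sum of all proper submodules'' argument goes through verbatim.

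Your proposal replaces this by $\mathscr{U}\xi\cong\mathscr{U}/\mathscr{U}(\ker\xi)$ and attempts to show $(\mathscr{U}\xi)_\xi=\mathbbm{k}v_\xi$ directly. There is a genuine gap in that step. Your observation that co-principality forces $\mathscr{U}_e=\Gamma$ is correct, but the next move---``$Xv_\xi$ is a sum of pieces of weight $\mu\cdot\xi$, and the $\xi$-component comes only from $\mu$ with $\mu\cdot\xi=\xi$''---does not stand up. The individual summands $\mu a_\mu$ of $X$ lie in $\mathscr{L}$, not in $\mathscr{U}$, so they do not act on the quotient $\mathscr{U}/\mathscr{U}(\ker\xi)$; there is no well-defined ``$\mu\cdot\xi$'' for a $\Gamma$-character in general (since $\mu$ need not preserve $\Gamma$); and axiom (A1) says nothing about stabilizers of characters. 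Consequently nothing prevents some $X\notin\Gamma+\mathscr{U}(\ker\xi)$ from having image in the generalized $\xi$-weight space. The $\Gamma^\ast$ realization sidesteps all of this: one-dimensionality is read off from a $\Gamma$-module computation in the ambient space, without ever needing to track how non-identity $\mu$'s shift weights.
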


\subsection{The case of \texorpdfstring{$\Anlg{\gl_n}$}{A(gln)}}

We note that for $n\geq3$ that $\widetilde{\Lambda}$ is not finitely generated as a
$\C$-algebra. This prevents us from using all of the results as is, but all is not
lost. The main arguments of Theorem
\ref{thm: canonical GZ mods for coprincpical Galois Order} rests on:
\[
\Hom{\Gamma}{\Gamma/\mathfrak{m},\Gamma^\ast}\cong\Hom{\mathbbm{k}}{\Gamma
/\mathfrak{m}\otimes_\Gamma\Gamma,\mathbbm{k}}\cong\mathbbm{k}.
\]
If we want a similar result for $S^{-1}\widetilde{\Gamma}$ we need to recall that
every maximal ideal $\mathfrak{m}$ of $S^{-1}\widetilde{\Gamma}$ is of the form
$S^{-1}\mathfrak{p}$, where $\mathfrak{p}$ is a prime (not necessarily maximal)
ideal of $\widetilde{\Gamma}\setminus S$. Therefore we have the following result.

\begin{theorem}\label{thm: Canonical GT modules for localized A(gln)}
Let $\xi$ be a character of $S^{-1}\widetilde{\Gamma}$ such that
$\ker\xi=S^{-1}\mathfrak{m}$, for some maximal ideal $\mathfrak{m}$ of
$\widetilde{\Gamma}$. Then the left cyclic module $\tAnlg{\gl_n}\xi$ has a unique
simple quotient $V^\prime(\xi)$ which is a Gelfand-Tsetlin module over
$\tAnlg{\gl_n}$ with $V^\prime(\xi)_\xi\neq0$.
\end{theorem}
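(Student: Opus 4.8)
### Proof plan

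The plan is to adapt the proof of Theorem~\ref{thm: canonical GZ mods for coprincpical Galois Order} (that is, Theorem 3.3(ii) of \cite{Hartwig20}) to the localized setting, reducing everything about the non-finitely-generated ring $S^{-1}\widetilde{\Gamma}$ to statements about localizations of the genuinely finitely generated ring $\widetilde{\Gamma}$, which does satisfy the finite-generation hypothesis (A4). First I would record the structural facts already in hand: by Corollary~\ref{cor: Localized A(gln) is an order}, $\tAnlg{\gl_n}$ is a co-principal Galois $(S^{-1}\Lambda)^{\mathbb{A}_n}$-order, so $C_{\tAnlg{\gl_n}}(\widetilde{\Gamma}) = S^{-1}\widetilde{\Gamma} =: \Gamma'$ is the base commutative subring, and Lemma~\ref{lem: Hartwig big lemma} applies with $\Lambda' = S^{-1}\Lambda$ in place of $\Lambda$. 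Note $\Gamma'$ and $\Lambda'$ are Noetherian (localizations of Noetherian rings), and $\Lambda'$ is a finitely generated $\Gamma'$-module, which is what the Harish-Chandra/finiteness arguments actually use; the only place finite generation over $\mathbbm{k}$ was invoked is in identifying, for a character $\xi$, the space $\Hom{\Gamma'}{\Gamma'/\ker\xi, (\Gamma')^\ast}$ with $\mathbbm{k}$.

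The key step is to re-establish that one-dimensionality in the localized setting. Here I would use the hypothesis that $\ker\xi = S^{-1}\mathfrak{m}$ for a \emph{maximal} ideal $\mathfrak{m}$ of $\widetilde{\Gamma}$ (rather than merely the expansion of a non-maximal prime). Then $\Gamma'/\ker\xi \cong \widetilde{\Gamma}_{\mathfrak{m}}/\mathfrak{m}\widetilde{\Gamma}_{\mathfrak{m}} \cong \widetilde{\Gamma}/\mathfrak{m} \cong \mathbbm{k}$, using that $\widetilde{\Gamma}$ is a finitely generated $\mathbbm{k}$-algebra so its maximal quotients are $\mathbbm{k}$ by the Nullstellensatz, and that localization at $\mathfrak{m}$ does not change the residue field. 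Consequently $\xi$ is a $\mathbbm{k}$-algebra character of $\Gamma'$ with one-dimensional residue field, and the computation
\[
\Hom{\Gamma'}{\Gamma'/\ker\xi,\ (\Gamma')^\ast} \cong \Hom{\mathbbm{k}}{\Gamma'/\ker\xi \otimes_{\Gamma'} \Gamma',\ \mathbbm{k}} \cong \Hom{\mathbbm{k}}{\mathbbm{k},\mathbbm{k}} \cong \mathbbm{k}
\]
goes through verbatim. With this in place, the remainder of the argument of Theorem~\ref{thm: canonical GZ mods for coprincpical Galois Order} transfers mechanically: co-principality gives that the evaluation pairing sends $\tAnlg{\gl_n}\xi$ into $\Gamma'$-valued functions, one builds the simple quotient $V'(\xi)$ as the quotient of $\tAnlg{\gl_n}\xi$ by the (necessarily unique, by the $\mathbbm{k}$-dimension-one computation) maximal submodule avoiding the $\xi$-isotypic line, and $\Gamma'$ acts locally finitely because every element of $\tAnlg{\gl_n}$ has finite $\mathscr{M}$-support, making $V'(\xi)$ a Gelfand-Tsetlin module with $V'(\xi)_\xi \neq 0$.

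I expect the main obstacle to be verifying that Hartwig's Theorem 3.3(ii) proof genuinely uses only the three ingredients I plan to lean on—(a) $\Lambda'$ finitely generated as a $\Gamma'$-module with $\Gamma'$ Noetherian, (b) co-principality of $\tAnlg{\gl_n}$ as a $\Gamma'$-order, and (c) the residue field computation above—rather than global finite generation of $\Gamma'$ over $\mathbbm{k}$ in some hidden place (for instance in invoking Lemma 2.19 of \cite{Hartwig20} about evaluations, or in the characterization of Gelfand-Tsetlin modules). If some auxiliary lemma does secretly need (A4) for $\Gamma'$ itself, the remedy is to localize \emph{that} lemma's proof at $\mathfrak{m}$ and work inside $\widetilde{\Gamma}_{\mathfrak{m}}$, which is Noetherian local with residue field $\mathbbm{k}$; the hypothesis $\ker\xi = S^{-1}\mathfrak{m}$ with $\mathfrak{m}$ maximal is exactly what makes this reduction to a classical local ring possible, and it is the reason the theorem is stated for such $\xi$ and not arbitrary characters of $S^{-1}\widetilde{\Gamma}$.
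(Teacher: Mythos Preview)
Your proposal is correct and follows essentially the same approach as the paper: the paper's proof consists of the single observation that $S^{-1}\widetilde{\Gamma}/S^{-1}\mathfrak{m}\cong S^{-1}(\widetilde{\Gamma}/\mathfrak{m})\cong\mathbbm{k}$, after which it declares that the rest of the argument of Theorem~\ref{thm: canonical GZ mods for coprincpical Galois Order} goes through unchanged. You have identified exactly this residue-field computation as the crux, supplied the same justification (maximality of $\mathfrak{m}$ in the finitely generated $\widetilde{\Gamma}$ together with exactness of localization), and your additional checks on Noetherianity, co-principality, and the remaining steps of Hartwig's argument are precisely the details the paper leaves implicit.
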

\begin{proof}
The key difference in this proof compared to Theorem
\ref{thm: canonical GZ mods for coprincpical Galois Order} is observing that
\[
S^{-1}\widetilde{\Gamma}/S^{-1}\mathfrak{m}\cong S^{-1}(\widetilde{\Gamma}/\mathfrak{m})
\cong\mathbbm{k}.
\]
Otherwise, the proof follows the same structure.
\end{proof}

Since $\tAnlg{\gl_n}$ is created by localizing $\widetilde{\Gamma}$ and $\Lambda$, we can view
any $\tAnlg{\gl_n}$-module $V$ as a $\Anlg{\gl_n}$-module by precomposing
with the embedding $\iota\colon\Anlg{\gl_n}\hookrightarrow\tAnlg{\gl_n}$.

%%%%%%%%%%%%%%%%%%%%%%%%%%%%%%%%%%%%%%%%
%            End of Section
%%%%%%%%%%%%%%%%%%%%%%%%%%%%%%%%%%%%%%%%

\section{Gelfand-Kirillov Conjecture for \texorpdfstring{$\Anlg{\gl_n}$}{A(gln)}}\label{sec: GK conj for A(gln)}

In this section we will discuss for which $n$'s the algebras $\Anlg{\gl_n}$ and
$\tAnlg{\gl_n}$ satisfy the Gelfand-Kirillov Conjecture. This is related to the
Noncommutative Noether Problem for the alternating group $A_n$, as discussed in
\cite{FSNoncommutativeNoetherVSClassicalNoether}.

An algebra $A$ is said to satisfy Gelfand-Kirillov Conjecture if it
is birationally equivalent to a Weyl algebra. That is its skew-field of fractions
is isomorphic to a skew Weyl field.

\begin{lemma}
$\Frac(\tAnlg{\gl_n})=\Frac(\Anlg{\gl_n})$.
\end{lemma}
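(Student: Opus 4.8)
The plan is to establish the two inclusions of skew fields of fractions separately, working inside the common overring $\Frac(\tAnlg{\gl_n})$ — which exists since $\tAnlg{\gl_n}$ is a Galois order by Corollary~\ref{cor: Localized A(gln) is an order}, hence Noetherian and therefore an Ore domain. Since $\Anlg{\gl_n}\subseteq\tAnlg{\gl_n}$ by Definition~\ref{def: localized A(gln)}, the inclusion $\Frac(\Anlg{\gl_n})\subseteq\Frac(\tAnlg{\gl_n})$ is automatic once one knows both have skew fields of fractions. So the content is the reverse inclusion, for which it suffices to show $\tAnlg{\gl_n}\subseteq\Frac(\Anlg{\gl_n})$.

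By Definition~\ref{def: localized A(gln)}, $\tAnlg{\gl_n}=\C\langle U_n,(S^{-1}\Lambda)^{\mathbb{A}_n}\rangle_{\rm alg}$, and $U_n\subseteq\Anlg{\gl_n}$ by Proposition~\ref{prop: basic properties of A(gln)}~\ref{item: A(gln) contains U(gln)}; thus everything reduces to checking $(S^{-1}\Lambda)^{\mathbb{A}_n}\subseteq\Frac(\Anlg{\gl_n})$. For this I would observe that $S^{-1}\Lambda$ is an $\mathbb{A}_n$-stable subring of $L$, so $(S^{-1}\Lambda)^{\mathbb{A}_n}=(S^{-1}\Lambda)\cap L^{\mathbb{A}_n}\subseteq L^{\mathbb{A}_n}=\widetilde{K}=\Frac(\widetilde{\Gamma})$. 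On the other hand, $\Anlg{\gl_n}$ is a Galois $\widetilde{\Gamma}$-ring by Proposition~\ref{prop: basic properties of A(gln)}~\ref{item: A(gln) is a Galois ring}, so it contains the commutative domain $\widetilde{\Gamma}$; since $\Anlg{\gl_n}$ is a domain, every nonzero element of $\widetilde{\Gamma}$ becomes invertible in $\Frac(\Anlg{\gl_n})$, and therefore $\Frac(\widetilde{\Gamma})\subseteq\Frac(\Anlg{\gl_n})$. Combining the two, $(S^{-1}\Lambda)^{\mathbb{A}_n}\subseteq\Frac(\Anlg{\gl_n})$, hence $\tAnlg{\gl_n}\subseteq\Frac(\Anlg{\gl_n})$ and $\Frac(\tAnlg{\gl_n})\subseteq\Frac(\Anlg{\gl_n})$, which closes the argument.

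The one point that deserves care — and which I expect to be the only real obstacle — is the bookkeeping surrounding the skew fields of fractions themselves: one should confirm that $\Anlg{\gl_n}$ is an Ore domain, so that $\Frac(\Anlg{\gl_n})$ is meaningful intrinsically (and not merely as the division subring of $\Frac(\tAnlg{\gl_n})$ generated by $\Anlg{\gl_n}$), and that this intrinsic localization sits inside $\Frac(\tAnlg{\gl_n})$ compatibly. Once that is in place, the lemma is essentially formal: passing from $\Anlg{\gl_n}$ to $\tAnlg{\gl_n}$ only adjoins inverses of certain nonzero elements of the domain $\widetilde{\Gamma}\subseteq\Anlg{\gl_n}$, which changes nothing at the level of skew fields of fractions.
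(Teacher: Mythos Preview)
Your argument is correct and matches the paper's one-line proof, which simply notes that $\tAnlg{\gl_n}$ is obtained by localizing $\widetilde{\Gamma}$ and $\Lambda$---precisely the content of your final paragraph. One minor point: the implication ``Galois order, hence Noetherian'' is neither stated in the paper nor needed here, since both algebras are Galois $\widetilde{\Gamma}$-rings in $\widetilde{\mathscr{K}}$ and therefore share $\Frac(\widetilde{\mathscr{K}})$ as their skew field of fractions.
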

\begin{proof}
This follows because $\tAnlg{\gl_n}$ is created by localizing $\widetilde{\Gamma}$ and
$\Lambda$.
\end{proof}

Hence, $\tAnlg{\gl_n}$ and $\Anlg{\gl_n}$ either both will or will not satisfy
the Gelfand-Kirillov Conjecture for each $n$.

\begin{proposition}\label{prop: frac of A(gln)}
For every $n$,
\[
\Frac(\Anlg{\gl_n})\cong\Frac\Big(\C(x_1,\ldots,x_n)^{A_n}\otimes
\bigotimes_{k=1}^{n-1}\big(\Frac(\mathscr{W}_k(\C))\big)^{A_k}\Big),
\]
where $\mathscr{W}_k(\C)$
is the $k$-dimensional Weyl algebra over $\C$.
\end{proposition}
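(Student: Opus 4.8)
The plan is to replace $\Anlg{\gl_n}$ by the ambient invariant ring $\widetilde{\mathscr{K}}=(L\#\mathscr{M})^{\mathbb{A}_n}$, then to identify the skew field $\Frac(L\#\mathscr{M})$ birationally with a tensor product of Weyl skew fields and a rational function field, compatibly with the $\mathbb{A}_n$-action, and finally to pass to $\mathbb{A}_n$-invariants. Throughout, $Q$ denotes $\Frac(L\#\mathscr{M})$: since $\mathscr{M}\cong\Z^{n(n-1)/2}$, the ring $L\#\mathscr{M}$ is an iterated skew-Laurent extension of the field $L$, hence a Noetherian domain, and $Q$ exists.

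The first step is $\Frac(\Anlg{\gl_n})=\Frac(\widetilde{\mathscr{K}})$, where $\Frac$ of either ring is understood as the division subring of $Q$ it generates (these coincide with the usual Ore localizations; e.g. $\widetilde{\mathscr{K}}=e(\mathscr{L}\ast\mathbb{A}_n)e$ for the idempotent $e=|\mathbb{A}_n|^{-1}\sum_{g\in\mathbb{A}_n}g$ is a corner of a Noetherian ring). The inclusion $\Anlg{\gl_n}\subseteq\widetilde{\mathscr{K}}$ gives one containment. Conversely, $\widetilde{\Gamma}\subseteq\Anlg{\gl_n}$, so the division hull of $\Anlg{\gl_n}$ in $Q$ contains $\widetilde{K}=\Frac(\widetilde{\Gamma})$; and since $\Anlg{\gl_n}$ is a Galois $\widetilde{\Gamma}$-ring by Proposition \ref{prop: basic properties of A(gln)}\ref{item: A(gln) is a Galois ring}, we have $\widetilde{K}\,\Anlg{\gl_n}=\widetilde{\mathscr{K}}$, which is therefore contained in that division hull.

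Next I would describe $Q$. Because $\delta^{ki}$ shifts only $x_{ki}$ (and the $\delta^{ki}$ commute, while $x_{n1},\ldots,x_{nn}$ are untouched), the skew monoid ring $\Lambda\#\mathscr{M}$ is isomorphic to
\[
\mathcal{R}=\C[x_{n1},\ldots,x_{nn}]\otimes_{\C}\bigotimes_{k=1}^{n-1}\mathcal{R}^{(k)},\qquad
\mathcal{R}^{(k)}=\bigotimes_{i=1}^{k}\C[x_{ki}]\big[(\delta^{ki})^{\pm1};\,x_{ki}\mapsto x_{ki}-1\big],
\]
and $L\#\mathscr{M}$ is the localization of $\mathcal{R}$ at the $\mathscr{M}$-stable multiplicative set $\Lambda\setminus\{0\}$, so $Q=\Frac(\mathcal{R})$. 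The classical birational equivalence $\C[t][\tau^{\pm1}]\sim\mathscr{W}_1(\C)$ (with $\tau t\tau^{-1}=t-1$) yields, block by block, $\Frac(\mathcal{R}^{(k)})\cong\Frac(\mathscr{W}_k(\C))$, and standard facts about $\Frac$ of localizations and of tensor products of commuting Noetherian domains give, relabeling $x_{ni}=:x_i$,
\[
Q\cong\Frac\!\Big(\C(x_1,\ldots,x_n)\otimes_{\C}\bigotimes_{k=1}^{n-1}\Frac(\mathscr{W}_k(\C))\Big).
\]
This identification is $\mathbb{A}_n$-equivariant: a permutation $\sigma\in A_k$ sends $x_{ki}\mapsto x_{k\sigma(i)}$ and, by the invariance assumption (A2), $\delta^{ki}\mapsto\delta^{k\sigma(i)}$, hence acts on $\mathcal{R}^{(k)}$ by permuting its $k$ Weyl blocks — i.e. by the permutation action of $A_k$ on $\mathscr{W}_k(\C)$ — while fixing every other factor, and $A_n$ permutes $x_1,\ldots,x_n$ while fixing the Weyl part.

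Finally I would take $\mathbb{A}_n$-invariants of both sides. Since $\mathbb{A}_n=A_1\times\cdots\times A_n$ acts factor-wise on the tensor decomposition, invariants distribute over the tensor product; combined with $\Frac(\C[x_1,\ldots,x_n]^{A_n})=\C(x_1,\ldots,x_n)^{A_n}$ and the same $\Frac$/localization/tensor bookkeeping as above, this gives
\[
\Frac(\Anlg{\gl_n})=\Frac(\widetilde{\mathscr{K}})=Q^{\mathbb{A}_n}
\cong\Frac\!\Big(\C(x_1,\ldots,x_n)^{A_n}\otimes_{\C}\bigotimes_{k=1}^{n-1}\big(\Frac(\mathscr{W}_k(\C))\big)^{A_k}\Big),
\]
as claimed. \textbf{The main obstacle} is the interchange $\Frac(R^G)=(\Frac R)^G$ for a finite group $G$ of automorphisms of a Noetherian domain $R$, used both for $\mathbb{A}_n$ acting on $L\#\mathscr{M}$ and for $A_k$ acting on $\mathcal{R}^{(k)}$: the ring $(\Frac R)^G$ is a skew field because $G$ is finite, but realizing each $G$-invariant fraction over a $G$-invariant denominator needs a symmetrization argument, available here because $\Lambda$ is module-finite over $\Lambda^{\mathbb{A}_n}$; I would handle this point by citing the standard noncommutative invariant-theory literature (Montgomery) or Futorny and Schwarz, into whose Theorem 1.1 this proposition is built to feed.
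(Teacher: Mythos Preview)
Your proposal is correct and follows essentially the same route as the paper: identify $\Frac(\Anlg{\gl_n})$ with $\Frac((L\#\mathscr{M})^{\mathbb{A}_n})$, decompose $\Lambda\#\mathscr{M}$ as a tensor product over the levels $k$, recognize each factor birationally as a Weyl algebra (with the $k=n$ factor giving $\C(x_1,\ldots,x_n)$), and distribute the $\mathbb{A}_n=A_1\times\cdots\times A_n$-invariants over the tensor factors. The only differences are cosmetic: you pass to $Q=\Frac(L\#\mathscr{M})$ first and take invariants last, whereas the paper stays inside invariants throughout and passes to fraction fields at the end; and you justify the equality $\Frac(\Anlg{\gl_n})=\Frac(\widetilde{\mathscr{K}})$ via the Galois-ring identity $\widetilde{K}\,\Anlg{\gl_n}=\widetilde{\mathscr{K}}$ and flag the $\Frac(R^G)=(\Frac R)^G$ interchange explicitly, while the paper treats both as ``clear'' (invoking only ``$A_k$ finite'' for the latter).
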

\begin{proof}
It is clear by construction that:
\begin{equation}\label{eq: begining chain in frac of A(gln) proof}
\Frac(\Anlg{\gl_n})=\Frac(\mathscr{L}^{\mathbb{A}_n})
=\Frac((L\#\mathscr{M})^{\mathbb{A}_n}).
\end{equation}
Since $L=\Frac(\Lambda)$:
\begin{equation}
\Frac((L\#\mathscr{M})^{\mathbb{A}_n})\cong\Frac((\Lambda\#\mathscr{M})^{\mathbb{A}_n}).
\end{equation}
We now recall that $\mathscr{M}$ is generated by $\delta^{ki}$'s and $\delta^{ki}$ fixes
$x_{\ell j}$ if $\ell\neq k$. As such, we have:
\begin{equation}
\Frac((\Lambda\#\mathscr{M})^{\mathbb{A}_n})\cong
\Frac((\Lambda_n\otimes\bigotimes_{k=1}^{n-1}\Lambda_k\#\mathscr{M}_k)^{\mathbb{A}_n}),
\end{equation}
where $\Lambda_k=\C[x_{k1},\ldots,x_{kk}]\subset\Lambda$ and $\mathscr{M}_k
=\langle\delta^{ki}\mid 1\leq i\leq k\rangle_{\rm grp}\leq\mathscr{M}$. Now, the $k$-th component
of $\mathbb{A}_n$ acts only on the $k$-th component of the tensor product. Therefore:
\begin{equation}
\Frac((\Lambda_n\otimes\bigotimes_{k=1}^{n-1}\Lambda_k\#\mathscr{M}_k)^{\mathbb{A}_n})\cong
\Frac(\Lambda_n^{A_n}\otimes\bigotimes_{k=1}^{n-1}(\Lambda_k\#\mathscr{M}_k)^{A_k}).
\end{equation}
Finally, since $A_k$ is finite for each $k$ we have:
\begin{equation}\label{eq: ending chain in frac of A(gln) proof}
\Frac(\Lambda_n^{A_n}\otimes\bigotimes_{k=1}^{n-1}(\Lambda_k\#\mathscr{M}_k)^{A_k})\cong
\Frac\big((\Frac(\Lambda_n))^{A_n}\otimes\bigotimes_{k=1}^{n-1}
(\Frac(\Lambda_k\#\mathscr{M}_k))^{A_k}\big).
\end{equation}
Combining the equations
(\ref{eq: begining chain in frac of A(gln) proof})-(\ref{eq: ending chain in frac of A(gln) proof}),
we have:
\begin{equation}\label{eq: last equation in Frac A(gln) proof}
\Frac(\Anlg{\gl_n})\cong
\Frac\big((\Frac(\Lambda_n))^{A_n}\otimes\bigotimes_{k=1}^{n-1}
(\Frac(\Lambda_k\#\mathscr{M}_k))^{A_k}\big).
\end{equation}
We finish the proof by observing that $\Frac(\Lambda_n)\cong\C(x_1,\ldots,x_n)$
and $\Lambda_k\#\mathscr{M}_k\cong\mathscr{W}_k(\C)$ by sending $\delta^{ki}x_{ki}\mapsto X_i$ and
$(\delta^{ki})^{-1}\mapsto Y_i$.
\end{proof}

We recall for readers both the classical Noether's problem and the
noncommutative Noether's problem as stated in \cite{FSNoncommutativeNoetherVSClassicalNoether}.
The classical problem asks, given a finite group $G$ and a rational function field
$\mathbbm{k}(x_1,\ldots,x_n)$ over a field $\mathbbm{k}$ such that $G$ acts linearly
on $\mathbbm{k}(x_1,\ldots,x_n)$, is $\mathbbm{k}(x_1,\ldots,x_n)^G$ a purely transcendental
extension of $\mathbbm{k}$. The noncommutative problem exchanges the rational function field
with the skew field of fractions of a Weyl algebra and asks if the $G$ invariants are the skew
field of some purely transcendental extension of $\mathbbm{k}$.

\begin{theorem}[Theorem 1.1 in \cite{FSNoncommutativeNoetherVSClassicalNoether}]\label{thm: CNP implies NNP}
If $G$ satisfies the Commutative Noether's problem, then $G$ satisfies the Noncommutative
Noether's Problem.
\end{theorem}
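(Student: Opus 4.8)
The plan is to identify the $G$-invariant part of the Weyl skew field explicitly with a Weyl skew field built over $R^{G}:=\mathbbm{k}(x_1,\dots,x_n)^{G}$, and then to read off from the Commutative Noether hypothesis that this base is purely transcendental. Throughout let $R=\mathbbm{k}(x_1,\dots,x_n)$ with $G\subseteq GL_n(\mathbbm{k})$ acting linearly and faithfully, let $\mathscr{A}=R\langle\partial_1,\dots,\partial_n\rangle$ be the associated localized Weyl algebra, so that $\Frac(\mathscr{A})=\Frac(\mathscr{W}_n(\mathbbm{k}))=:D$, and extend the $G$-action to $\mathscr{A}$ and to $D$ through the induced action on $\mathrm{Der}(R/\mathbbm{k})=\bigoplus_i R\partial_i$. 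Since $\mathrm{char}\,\mathbbm{k}=0$ and the action is faithful, $R/R^{G}$ is a finite Galois field extension with group $G$; by hypothesis $R^{G}$ is purely transcendental over $\mathbbm{k}$, necessarily of transcendence degree $n$, so $R^{G}=\mathbbm{k}(y_1,\dots,y_n)$ with $y_1,\dots,y_n$ algebraically independent.

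The first step is to move the coordinate derivations of $R^{G}$ into $\mathscr{A}^{G}$. Because $R/R^{G}$ is separable, each $\partial/\partial y_i$ extends uniquely to a $\mathbbm{k}$-derivation $\widetilde{\partial}_i$ of $R$ — equivalently $\mathrm{Der}(R/\mathbbm{k})=R\otimes_{R^{G}}\mathrm{Der}(R^{G}/\mathbbm{k})$ — so $\{\widetilde{\partial}_i\}$ is an $R$-basis of $\mathrm{Der}(R/\mathbbm{k})$ and $\mathscr{A}=R\langle\widetilde{\partial}_1,\dots,\widetilde{\partial}_n\rangle=\bigoplus_{\alpha}R\,\widetilde{\partial}^{\alpha}$. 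Uniqueness of the extension immediately gives $g\widetilde{\partial}_ig^{-1}=\widetilde{\partial}_i$ for all $g\in G$ (both sides are $\mathbbm{k}$-derivations of $R$ restricting to $\partial/\partial y_i$ on $R^{G}$), $[\widetilde{\partial}_i,\widetilde{\partial}_j]=0$, and $\widetilde{\partial}_i(R^{G})\subseteq R^{G}$; hence $R^{G}\langle\widetilde{\partial}_1,\dots,\widetilde{\partial}_n\rangle$ is a copy of the localized Weyl algebra over $R^{G}$ sitting inside $\mathscr{A}^{G}$. To see equality I would pass to associated graded rings: the order filtration of $\mathscr{A}$ is $G$-stable, the symbols $\xi_i$ of the $\widetilde{\partial}_i$ are $G$-fixed, so $G$ acts on $\mathrm{gr}\,\mathscr{A}=R[\xi_1,\dots,\xi_n]$ only through $R$ and $(\mathrm{gr}\,\mathscr{A})^{G}=R^{G}[\xi_1,\dots,\xi_n]$; since $|G|$ is invertible, the Reynolds operator is filtered and compatible with $\mathrm{gr}$, whence $\mathrm{gr}(\mathscr{A}^{G})=R^{G}[\xi]$ and therefore $\mathscr{A}^{G}=\bigoplus_{\alpha}R^{G}\,\widetilde{\partial}^{\alpha}$, the localized Weyl algebra over $\mathbbm{k}(y_1,\dots,y_n)$. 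Taking fraction fields, $\Frac(\mathscr{A}^{G})\cong\Frac(\mathscr{W}_n(\mathbbm{k}))$ — the skew field of a purely transcendental extension, exactly the shape the Noncommutative Noether Problem demands — provided $\Frac(\mathscr{A}^{G})$ is all of $D^{G}$.

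Establishing $D^{G}=\Frac(\mathscr{A}^{G})$ is the step I expect to be the crux. Fix a normal basis element $\theta$ of $R/R^{G}$ and set $r_i=g_i(\theta)$ for an enumeration $g_1,\dots,g_m$ of $G$; a short computation using commutativity of $R$ shows $\mathscr{A}=\bigoplus_i r_i\mathscr{A}^{G}$ is a free right $\mathscr{A}^{G}$-module of rank $|G|$. Hence $\mathscr{A}^{G}\setminus\{0\}$ is an Ore set in $\mathscr{A}$, and localizing yields $\bigoplus_i r_iD'$ with $D'=\Frac(\mathscr{A}^{G})$, a domain finite over the division ring $D'$, hence itself a division ring and therefore equal to $D$. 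Now take $d=\sum_i r_id'_i\in D^{G}$ with $d'_i\in D'$ (the subring $D'$ is fixed pointwise by $G$); applying any $g\in G$ permutes the $r_i$ by the left regular representation, so freeness over $D'$ forces all the $d'_i$ to coincide, say $d'_i=w$, and then $d=\big(\sum_i r_i\big)w=\mathrm{Tr}_{R/R^{G}}(\theta)\,w\in D'$, with $\mathrm{Tr}_{R/R^{G}}(\theta)\neq0$. This gives $D^{G}\subseteq D'$, the reverse inclusion is immediate, and so $D^{G}=\Frac(\mathscr{A}^{G})\cong\Frac(\mathscr{W}_n(\mathbbm{k}))$, which is exactly what the Noncommutative Noether Problem requires of $G$. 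The delicate points are that invariants commute with this Ore localization — which rests on the module-finiteness of $\mathscr{A}$ over $\mathscr{A}^{G}$ just noted — and the permutation argument pinning down the $d'_i$; the derivation-extension bookkeeping of the previous paragraph is routine once separability of $R/R^{G}$ is in hand.
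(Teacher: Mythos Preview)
The paper does not contain a proof of this statement: it is quoted verbatim as Theorem 1.1 of \cite{FSNoncommutativeNoetherVSClassicalNoether} and used as a black box in Section~\ref{sec: GK conj for A(gln)}. So there is nothing in the present paper to compare your argument against.

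That said, your outline is essentially correct and is close in spirit to the standard approach. The two genuinely nontrivial steps are handled properly. First, the identification $\mathscr{A}^{G}=\bigoplus_{\alpha}R^{G}\widetilde{\partial}^{\alpha}$: your use of the unique lifting of derivations along the separable extension $R/R^{G}$, together with the Reynolds-compatible associated graded argument, is the right way to pin this down; the vanishing of $[\widetilde{\partial}_i,\widetilde{\partial}_j]$ really does follow from uniqueness, since any derivation of $R$ that kills $R^{G}$ vanishes on all of $R$ by separability. Second, the passage $D^{G}=\Frac(\mathscr{A}^{G})$: the normal-basis trick works, and the point you flag as delicate, namely $\mathrm{Tr}_{R/R^{G}}(\theta)\neq 0$, is automatic precisely because the $g_i(\theta)$ are $R^{G}$-linearly independent. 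One small wrinkle worth making explicit is why $\bigoplus_i r_iD'$ exhausts $D$: you need that this ring is a domain finite over a division ring, hence itself a division ring containing $\mathscr{A}$, and then appeal to the universal property of $D=\Frac(\mathscr{A})$. You gesture at this but it deserves one more sentence.
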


Noether's problem for $A_n$ is still open for $n\geq5$. However, we obtain the following
result:

\begin{theorem}\label{thm: Noethers problem implies GK-conjecture}
If the alternating groups $A_1,A_2,\ldots,A_n$ provide a positive solution to Noether's problem, then
$\Anlg{\gl_n}$ satisfies the Gelfand-Kirillov conjecture.
\end{theorem}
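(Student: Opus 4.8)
The plan is to combine Proposition~\ref{prop: frac of A(gln)} with Futorny--Schwarz's Theorem~\ref{thm: CNP implies NNP}. By Proposition~\ref{prop: frac of A(gln)} we have
\[
\Frac(\Anlg{\gl_n})\cong\Frac\Big(\C(x_1,\ldots,x_n)^{A_n}\otimes
\bigotimes_{k=1}^{n-1}\big(\Frac(\mathscr{W}_k(\C))\big)^{A_k}\Big),
\]
so it suffices to show that each tensor factor is the skew field of a purely transcendental extension of $\C$: the factor $\C(x_1,\ldots,x_n)^{A_n}$ is a commutative rational function field under the hypothesis (this is exactly the positive solution to the classical Noether problem for $A_n$), and each $\big(\Frac(\mathscr{W}_k(\C))\big)^{A_k}$ is $\Frac$ of a purely transcendental extension by applying Theorem~\ref{thm: CNP implies NNP} to the group $A_k$ acting linearly on the $2k$ variables of $\mathscr{W}_k(\C)$. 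The latter requires knowing that $A_k$ solves the \emph{classical} Noether problem, which is again precisely the standing hypothesis for each $k\le n$.

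First I would make the identification of the $A_k$-action explicit: under the isomorphism $\Lambda_k\#\mathscr{M}_k\cong\mathscr{W}_k(\C)$ from the proof of Proposition~\ref{prop: frac of A(gln)}, the alternating group $A_k$ permutes the generators $x_{k1},\ldots,x_{kk}$ and simultaneously permutes the corresponding shift automorphisms $\delta^{k1},\ldots,\delta^{kk}$, hence it acts on $\mathscr{W}_k(\C)$ by permuting the $k$ ``position'' generators and the $k$ ``momentum'' generators in the same way --- in particular, this is a linear action on the $2k$-dimensional defining space, so Theorem~\ref{thm: CNP implies NNP} genuinely applies. Then I would invoke Theorem~\ref{thm: CNP implies NNP}: since $A_k$ solves the commutative Noether problem by hypothesis, it solves the noncommutative Noether problem, so $\big(\Frac(\mathscr{W}_k(\C))\big)^{A_k}\cong\Frac(\mathscr{W}_{d_k}(\mathbbm{k}(t_1,\ldots,t_{e_k})))$ for suitable $d_k,e_k$ with $2d_k+e_k=2k$.

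Finally I would assemble the pieces: the tensor product over $\C$ of a commutative rational function field with finitely many skew Weyl fields over rational function fields is again (the skew field of) a Weyl algebra over a rational function field --- one simply collects all the commuting ``central'' rational variables into a single purely transcendental extension $\C(s_1,\ldots,s_m)$ and all the Weyl-algebra generators into $\mathscr{W}_N(\C(s_1,\ldots,s_m))$ with $N=\sum_{k=1}^{n-1}d_k$ --- and takes $\Frac$. This exhibits $\Frac(\Anlg{\gl_n})$ as a skew Weyl field, which is the Gelfand--Kirillov property.

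The main obstacle is not conceptual but bookkeeping: one must check carefully that the $A_k$-action arising from the skew monoid ring really is a \emph{linear} action on $\mathscr{W}_k(\C)$ in the sense required by \cite{FSNoncommutativeNoetherVSClassicalNoether} (i.e.\ that permuting both the $x_{ki}$ and the $\delta^{ki}$ is the correct ``linear'' lift), and that tensoring the resulting skew Weyl fields over $\C$ and passing to $\Frac$ does not introduce any subtlety --- here one uses that the factors involve pairwise disjoint sets of variables, so the tensor product is already an Ore domain and its skew field of fractions is the expected skew Weyl field. Everything else is a direct citation of Proposition~\ref{prop: frac of A(gln)} and Theorem~\ref{thm: CNP implies NNP}.
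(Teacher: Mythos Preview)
Your proposal is correct and follows essentially the same approach as the paper: both combine Proposition~\ref{prop: frac of A(gln)} with Theorem~\ref{thm: CNP implies NNP} applied to each $A_k$. The only difference is that the paper's two-line proof invokes the stronger conclusion $\Frac(\mathscr{W}_k(\C))^{A_k}\cong\Frac(\mathscr{W}_k(\C))$ (same rank, no extra central variables), which is what the Futorny--Schwarz noncommutative Noether statement actually gives, so your parameters $(d_k,e_k)$ and the ensuing assembly bookkeeping are unnecessary---but your more cautious formulation is not wrong, just more work than needed.
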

\begin{proof}
If $A_k$ satisfies Noether's problem, then
$\Frac(\mathscr{W}_k(\C))^{A_k}\cong\Frac(\mathscr{W}_k(\C))$. The rest follows from
Proposition \ref{prop: frac of A(gln)}.
\end{proof}

Hence, as a corollary to Theorem \ref{thm: Noethers problem implies GK-conjecture}
and Maeda's results in \cite{MAEDA1989418}, we have:
\begin{corollary}
For $n\leq 5$, $\Anlg{\gl_n}$ satisfies the Gelfand-Kirillov Conjecture.
\end{corollary}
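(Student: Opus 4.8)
The plan is to read this off directly from Theorem~\ref{thm: Noethers problem implies GK-conjecture}: that theorem says $\Anlg{\gl_n}$ satisfies the Gelfand-Kirillov conjecture as soon as each of $A_1,A_2,\ldots,A_n$ gives a positive solution to the commutative Noether problem (with $A_k$ acting on $\C(x_1,\ldots,x_k)$ by permuting coordinates). So for $n\leq 5$ the entire task is to verify Noether's problem for $A_1,A_2,A_3,A_4$ and $A_5$.

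The first three are routine. Since $A_1$ and $A_2$ are the trivial group, $\C(x_1,\ldots,x_k)^{A_k}=\C(x_1,\ldots,x_k)$ is visibly purely transcendental over $\C$. Since $A_3\cong\Z/3\Z$ is abelian and $\C$ contains a primitive cube root of unity, Fischer's theorem on invariant fields of finite abelian groups gives that $\C(x_1,x_2,x_3)^{A_3}$ is rational over $\C$; concretely one can even exhibit the parametrization explicitly using the factorization of the cubic discriminant.

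The substantive input is the rationality of $\C(x_1,\ldots,x_5)^{A_5}$, which is precisely Maeda's theorem in \cite{MAEDA1989418}; the case $A_4$ is classical and is in any case subsumed in the same circle of ideas. Granting these, all of $A_1,\ldots,A_n$ satisfy Noether's problem for $n\leq 5$, and Theorem~\ref{thm: Noethers problem implies GK-conjecture} (which itself rests on Proposition~\ref{prop: frac of A(gln)} and Theorem~\ref{thm: CNP implies NNP}) delivers the conclusion. There is no real obstacle internal to this paper: the one genuinely hard ingredient, Maeda's rationality result for $A_5$, is quoted rather than reproved, and the first open case --- $A_6$, where Noether's problem is not known --- is exactly why the statement stops at $n=5$.
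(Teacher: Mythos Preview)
Your argument is correct and matches the paper's own reasoning: the corollary is stated there simply as a consequence of Theorem~\ref{thm: Noethers problem implies GK-conjecture} together with Maeda's positive solution for $A_5$ in \cite{MAEDA1989418}, with the smaller alternating groups treated as known. Your write-up actually fills in more detail on $A_1$--$A_4$ than the paper does; the only soft spot is the phrase ``subsumed in the same circle of ideas'' for $A_4$, which you might replace by a concrete citation or by noting that rationality for $A_4$ over $\C$ is classical.
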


%%%%%%%%%%%%%%%%%%%%%%%%%%%%%%%%%%%%%%%%
%            End of Section
%%%%%%%%%%%%%%%%%%%%%%%%%%%%%%%%%%%%%%%%

\section*{Acknowledgements}
The author would like to thank his advisor Jonas Hartwig for guidance and helpful
discussion. The author would also like to thank Jo\~ao Schwarz for his comments and helpful discussion. Finally, the author would like to thank Iowa State University, where the author resided during the completion of this work.

\end{document}